\theoremstyle{definition}
\theoremstyle{plain}
\newtheorem{theorem}{Theorem}[section]
\newtheorem{proposition}[theorem]{Proposition}
\newtheorem{lemma}[theorem]{Lemma}
\newtheorem{corollary}[theorem]{Corollary}
\newtheorem{sub}{}[theorem] 
\theoremstyle{definition}
\newtheorem{definition}[theorem]{Definition}
\newtheorem{parag}[theorem]{}
\newtheorem{subparag}[sub]{}
\newtheorem{remark}[theorem]{Remark}
\newtheorem{remarks}[theorem]{Remarks}
\theoremstyle{remark}
\newtheorem*{smallremark}{Remark}
\newenvironment{enumerata}%
{\begin{enumerate}

}{\end{enumerate}}
\newcommand{\Aut}{	\operatorname{{\rm Aut}}}
\newcommand{\Spec}{	\operatorname{{\rm Spec}}}
\newcommand{\depth}{	\operatorname{{\rm depth}}}
\newcommand{\Char}{	\operatorname{{\rm char}}}
\newcommand{\Cl}{	\operatorname{{\rm Cl}}}
\newcommand{\dom}{	\operatorname{{\rm dom}}}
\newcommand{\codom}{	\operatorname{{\rm codom}}}
\newcommand{\id}{	\operatorname{{\rm id}}}
\newcommand{\exc}{	\operatorname{{\rm exc}}}
\newcommand{\Miss}{	\operatorname{{\rm Miss}}}
\newcommand{\Cont}{	\operatorname{{\rm Cont}}}
\newcommand{\cent}{	\operatorname{{\rm cent}}}
\newcommand{\Bir}{	\operatorname{{\rm Bir}}}
\newcommand{\bZ}{\mathbf{Z}}
\newcommand{\bM}{\mathbf{M}}
\newcommand{\bV}{\text{\boldmath $\Veul$}}
\newcommand{\bH}{\text{\boldmath $\Heul$}}
\newcommand{\bG}{\text{\boldmath $\Geul$}}
\newcommand{\setspec}[2]{\big\{\,#1\, \mid \,#2\, \big\}}
\newcommand{\Nat}{\ensuremath{\mathbb{N}}}
\newcommand{\aff}{\ensuremath{\mathbb{A}}}
\newcommand{\proj}{\ensuremath{\mathbb{P}}}
\newcommand{\bk}{{\ensuremath{\rm \bf k}}}
\newcommand{\kk}[1]{\bk^{[#1]}}
\newcommand{\cgoth}{{\ensuremath{\mathfrak{c}}}}
\newcommand{\Cgoth}{{\ensuremath{\mathfrak{C}}}}
\newcommand{\fgoth}{{\ensuremath{\mathfrak{f}}}}
\newcommand{\pgoth}{{\ensuremath{\mathfrak{p}}}}
\newcommand{\Aeul}{\EuScript{A}}
\newcommand{\Ceul}{\EuScript{C}}
\newcommand{\Deul}{\EuScript{D}}
\newcommand{\Eeul}{\EuScript{E}}
\newcommand{\Geul}{\EuScript{G}}
\newcommand{\Heul}{\EuScript{H}}
\newcommand{\Meul}{\EuScript{M}}
\newcommand{\Neul}{\EuScript{N}}
\newcommand{\Oeul}{\EuScript{O}}
\newcommand{\Veul}{\EuScript{V}}
\newcommand{\isom}{\cong}
\renewcommand{\epsilon}{\varepsilon}
\renewcommand{\phi}{\varphi}
\renewcommand{\emptyset}{\varnothing}
\newlength{\mylength}
\newcommand{\BirA}{\Bir(\aff^2)}
\newcommand{\AutA}{\Aut(\aff^2)}
\newcommand{\rien}[1]{}
\begin{document}
\renewcommand{\baselinestretch}{1.07}


\title[Birational endomorphisms of the affine plane]{Compositions of\\ birational endomorphisms of the affine plane}

\author{Pierrette Cassou-Nogu\`es}
\author{Daniel Daigle}

\address{IMB, Universit\'e Bordeaux 1 \\
351 Cours de la lib\'eration, 33405, Talence Cedex, France}
\email{Pierrette.Cassou-nogues@math.u-bordeaux1.fr}

\address{Department of Mathematics and Statistics\\
University of Ottawa\\
Ottawa, Canada\ \ K1N 6N5}
\email{ddaigle@uottawa.ca}

\thanks{Research of the first author partially supported by Spanish grants MTM2010-21740-C02-01 and  MTM2010-21740-C02-02.}
\thanks{Research of the second author supported by grant RGPIN/104976-2010 from NSERC Canada.}

{\renewcommand{\thefootnote}{}
\footnotetext{2010 \textit{Mathematics Subject Classification.}
Primary: 14R10, 14H50.}}

{\renewcommand{\thefootnote}{}
\footnotetext{ \textit{Key words and phrases:} Affine plane, birational morphism, plane curve.}}

\begin{abstract} 
Besides contributing several new results in the general theory of birational endomorphisms of $\aff^2$,
this article describes certain classes of birational endomorphisms $f : \aff^2 \to \aff^2$ defined by requiring
that the missing curves or contracting curves of $f$ are lines.
The last part of the article is concerned with the monoid structure of the 
set of birational endomorphisms of $\aff^2$.
\end{abstract}

\maketitle
  
\vfuzz=2pt


Let $\bk$ be an algebraically closed field
and let $\aff^2 = \aff^2_\bk$ be the affine plane over $\bk$.
A {\it birational endomorphism of $\aff^2$}
is a morphism of algebraic varieties, $f:\aff^2 \to \aff^2$,
which restricts to an isomorphism $U\to V$ for some nonempty Zariski-open
subsets $U$ and $V$ of $\aff^2$.
The set $\Bir( \aff^2 )$ of birational endomorphisms of $\aff^2$
is a monoid under composition of morphisms, and the group of invertible elements
of this monoid is the automorphism group $\Aut(\aff^2)$.
An element $f$ of $\Bir( \aff^2 )$ is {\it irreducible\/}
if it is not invertible and if, for every factorization $f = h \circ g$ with $g,h \in \Bir( \aff^2 )$,
one of $g,h$ is invertible.
Elements $f,g \in \Bir( \aff^2 )$ are {\it equivalent} (denoted $f \sim g$) if there exist $u,v \in \Aut( \aff^2 )$
satisfying $u \circ f \circ v = g$.
The elements of $\Bir( \aff^2 )$ which are equivalent to the birational morphism
$c : \aff^2 \to \aff^2$, $c(x,y) = (x,xy)$,
are called  {\it simple affine contractions\/} (SAC), and are the simplest examples of non invertible elements
of $\Bir( \aff^2 )$.
It was at one time an open question whether $\Aut(\aff^2) \cup \{c\}$ generated $\Bir( \aff^2 )$ as a monoid
(the question arose in Abhyankar's seminar at Purdue in the early 70s);
P.\ Russell 
showed that the answer was negative by giving
an example (which appeared later in \cite[4.7]{Dai:bir}) of an irreducible
element of $\Bir( \aff^2 )$ which is not a SAC.
This was the first indication that $\Bir( \aff^2 )$ could be a complicated object.

Papers \cite{Dai:bir} and \cite{Dai:trees} seem to be the first publications
that study birational endomorphisms of $\aff^2$ in a systematic way (these are our main references).
We know of two more contributions to the subject: a certain family of elements of $\BirA$
is described explicitly in \cite{CassouRussell:BirEnd},
and \cite{ShpilYu:BirMor} gives an algorithm for deciding whether a given element of $\Bir( \aff^2 )$ is in 
the submonoid generated by $\Aut(\aff^2) \cup \{c\}$.

The list of references is much longer if we include another aspect of the problem.
Indeed, there is a long history of studying polynomials $F \in \bk[X,Y]$ which appear as components of
birational endomorphisms of $\aff^2$.
To explain this, we recall some definitions.
A polynomial $F \in \bk[X,Y]$ is called a {\it field generator\/} if there exists $G \in \bk(X,Y)$
satisfying $\bk(F,G)=\bk(X,Y)$;
in the special case where $G$ can be chosen in $\bk[X,Y]$, one says that $F$ is a {\it good\/} field generator.
So a good field generator is just the same thing as a component of a birational endomorphism of $\aff^2$.
By a {\it generally rational polynomial\/}\footnote{Generally rational polynomials
are sometimes called ``rational polynomials'' or ``generically rational polynomials''.}
 we mean an $F \in \bk[X,Y]$ such that, for almost all $\lambda \in \bk$,
$F-\lambda$ is an irreducible polynomial whose zero-set in $\aff^2$ is a rational curve
(where ``almost all'' means ``all but possibly finitely many'').
If $\Char\bk=0$ then field generators and generally rational polynomials are one and the same thing
(this is noted in \cite{MiySugie:GenRatPolys}; see \cite{Dai:GenRatPols} for the positive characteristic case).
The study of these polynomials is a classical subject,
as is clear from considering the following (incomplete) list of references:
\cite{Nishino_68},
\cite{Nishino_69},
\cite{Nishino_70},
\cite{Saito_72},
\cite{Saito_77},
\cite{JanThesis},
\cite{Rus:FieldGen},
\cite{Rus:fg2},
\cite{MiySugie:GenRatPolys},
\cite{Kal:TwoRems},
\cite{BartoCassou:RemPolysTwoVars},
\cite{NeumannNorbury:simple},
\cite{Cassou-BadFG},
\cite{Sasao_QuasiSimple2006},
\cite{Dai:GenRatPols},
\cite{CassouDaigVGVBFG}.

\medskip
This paper is a contribution to the theory of birational endomorphisms of $\aff^2$.
Our methods are those of \cite{Dai:bir} and \cite{Dai:trees},
and we place ourselves at the same level of generality as in those papers:
the base field $\bk$ is algebraically closed but otherwise arbitrary.

In \cite{Dai:bir}, \cite{Dai:trees} and \cite{CassouRussell:BirEnd},
there is a tendency to restrict one's attention to irreducible elements of $\BirA$.
Going in an orthogonal direction, the present paper is mainly concerned
with {\it compositions\/} of birational endomorphisms.
This choice is motivated by many reasons.
First, the examples given in \cite{Dai:bir}, \cite{Dai:trees} and \cite{CassouRussell:BirEnd}
show that $\Bir( \aff^2 )$ contains a great diversity of irreducible elements of arbitrarily high complexity;
since the task of finding all irreducible elements is probably hopeless,
it seems to us that finding more examples of them might be less relevant than, say,
trying to understand the monoid structure of $\BirA$.
Also, a significant portion of this paper is geared towards
proving Theorem~\ref{du-239412948ynf}, which we need in the forthcoming \cite{CN-Daigle:Lean} for proving
the following fact:
{\it Let $\bk$ be an arbitrary field and $A_0 \supset A_1 \supset \cdots$ an infinite,
strictly descending sequence of rings such that
\mbox{\rm (i)}~each $A_i$ is a polynomial ring in $2$ variables over $\bk$;
\mbox{\rm (ii)}~all $A_i$ have the same field of fractions;
and \mbox{\rm (iii)}~the ring $R = \bigcap_i A_i$ is not equal to $\bk$;
then $R=\bk[F]$ for some $F$, where $F$ is a good field generator of $A_0$
and a variable of $A_i$ for $i \gg0$.}
Moreover, if one is interested in field generators and generally rational polynomials,
one should not restrict one's attention to irreducible endomorphisms.
In this respect we point out that the components of the morphisms described by 
Theorem~\ref{du-239412948ynf} are precisely the ``rational polynomials of simple type'' listed
in \cite{NeumannNorbury:simple}.

\medskip
The paper is organized as follows.

Section~\ref{Sec:AdmissConfig} contains some preliminary observations on
``admissible'' and ``weakly admissible'' configurations of curves in $\aff^2$.

Section~\ref{SEC:BirEndos} gives several new results in the general theory of birational endomorphisms of $\aff^2$
(in particular \ref{g64f4j3k21ld}, \ref{938hdb67-gdhifa}, \ref{ie123918209ejd10} and \ref{sdkfqp923wr2390094rjdfk},
but also several useful lemmas).

Given $f \in \BirA$, let $\Miss(f)$ (resp.\ $\Cont(f)$) be the set of missing curves (resp.\ contracting curves) 
of $f$; see \ref{dkfjqp923p1`2m3k23} for definitions.
Section~\ref{SECCompositionsofsimpleaffinecontractions} studies
those $f \in \BirA$ satisfying the condition that $\Miss(f)$ is a weakly admissible configuration,
or the stronger condition that $\Miss(f)$ is an admissible configuration,
or the even stronger condition that both $\Miss(f)$ and $\Cont(f)$ are admissible configurations.
The main result of Section~\ref{SECCompositionsofsimpleaffinecontractions},
Theorem~\ref{du-239412948ynf}, gives a complete description of these three subsets of $\BirA$.

While Sections~\ref{SEC:BirEndos} and \ref{SECCompositionsofsimpleaffinecontractions} are mainly concerned with
individual endomorphisms, Section~\ref{SomeaspectsofthemonoidBirA} considers the monoid structure of $\BirA$.
The first part of that section shows,
in particular, that if $S$ is a subset of $\BirA$ such that $\AutA \cup S$ generates
$\BirA$ as a monoid, then $\setspec{ \deg f }{ f \in S }$ is not bounded and $|S|=|\bk|$ 
(giving a very strong negative answer to the already mentioned question of Abhyankar).
The second part shows that the submonoid $\Aeul$ of $\BirA$ generated by SACs and automorphisms
is ``factorially closed'' in $\BirA$, i.e., if $f,g \in \BirA$
are such that $g \circ f \in \Aeul$ then $f,g \in \Aeul$.

\medskip
The second author would like to thank his teacher, Peter Russell,
for introducing him to birational endomorphisms of $\aff^2$.
He would also like to thank the faculty and staff at the Universit\'e de Bordeaux I for their hospitality.
The research leading to this paper was initiated when the second author spent
one month at that institution as a {\it professeur invit\'e}.

\medskip
\noindent{\bf Conventions.}
All algebraic varieties (in particular all curves and surfaces) are irreducible and reduced.
All varieties and morphisms are over an algebraically closed field $\bk$ of arbitrary characteristic
($\bk$ is assumed to be algebraically closed from \ref{weaklyioufq923lksmdjjj} until the end of the paper).
The word ``point'' means ``closed point'', unless otherwise specified.

All rings are commutative and have a unity. The symbol $A^*$ denotes the set of units of a ring $A$.
If $A$ is a subring of a ring $B$ and $n \in \Nat$, the notation $B = A^{[n]}$ means that $B$
is isomorphic (as an $A$-algebra) to the polynomial ring in $n$ variables over $A$. 
We adopt the conventions that $0 \in \Nat$, that ``$\subset$'' means strict inclusion
and that ``$\setminus$'' denotes set difference.

\section{Admissible configurations of curves in $\aff^2$}
\label {Sec:AdmissConfig}

Recall the following terminology.
Let $\bk$ be a field, $A = \kk2$, and $\aff^2_\bk = \Spec A$.
We abbreviate $\aff^2_\bk$ to $\aff^2$.
By a {\it coordinate system of\/} $A$, 
we mean an ordered pair $(F,G)$ of elements of $A$ satisfying $A=\bk[F,G]$.
A {\it variable\/} of $A$ is an element $F \in A$ for which there
exists $G$ satisfying $\bk[F,G]=A$.

Let $F \in A$ be an irreducible element and 
let $C \subset \aff^2$ be the zero-set of $F$
(i.e., the set of prime ideals $\pgoth \in \Spec A = \aff^2$ satisfying $F \in \pgoth$);
we call $C$ a {\it line\/} if $A/FA = \kk1$,
and a {\it coordinate line\/} if $F$ is a variable of $A$.
Note that $C$ is a line iff $C \isom \aff^1$;
given a coordinate system $(X,Y)$ of $A$, $C$ is a coordinate line iff
some automorphism of $\aff^2$ maps $C$ onto the zero-set of $X$.
It is clear that all coordinate lines are lines,
and the Epimorphism Theorem (\cite{A-M:line}, \cite{Suzuki})
states that the converse is true if $\Char\bk=0$.
It is known that not all lines are coordinate lines if $\Char\bk \neq0$
(on the subject of lines which are not coordinate lines,
see e.g.~\cite{Ganong:Survey} for a recent survey).
Coordinate lines are sometimes called rectifiable lines.

\medskip
By a {\it coordinate system of\/} $\aff^2 = \Spec A$, we mean a coordinate system of $A$.
That is, a coordinate system of $\aff^2$ is a pair $(X,Y) \in A \times A$ satisfying $A = \bk[X,Y]$.

\medskip
We adopt the viewpoint that $A$ (or $\aff^2$) does not come equipped with a preferred coordinate system,
i.e., no coordinate system is better than the others.  
This may be confusing to some readers, especially those who like to identify $\aff^2$ with $\bk^2$,
because any such identification inevitably depends on the choice of a coordinate system.
So perhaps the following trivial remarks (\ref{43fg6gxcfzx}) deserve to be made.

\begin{parag}  \label {43fg6gxcfzx}
Let $\Cgoth$ denote the set of coordinate systems of $\aff^2$ (or $A$).
\begin{enumerata}

\item Given $\cgoth=(X,Y) \in \Cgoth$ and an element $F \in A$,
one can consider the map $\bk^2 \to \bk$, $(x,y) \mapsto F(x,y)$,
defined by first writing $F = \sum_{i,j} a_{ij}X^iY^j$ with $a_{ij} \in \bk$ (recall that $A=\bk[X,Y]$)
and then setting $F(x,y) =  \sum_{i,j} a_{ij}x^iy^j$ for $(x,y) \in \bk^2$.
One can then consider the zero-set $\bZ(F) \subseteq \bk^2$ of that map $F$.  We stress that 
the map $(x,y) \mapsto F(x,y)$ and the set  $\bZ(F)$ depend on {\it both $F$ and $\cgoth$};
we should write $F_\cgoth(x,y)$ and $\bZ_\cgoth(F)$, but we omit the $\cgoth$.

\item  \label {931--dj1092hd299}
Let $P,Q \in \bk[T_1,T_2]$,
$P = \sum_{i,j} a_{ij}T_1^iT_2^j$, $Q = \sum_{i,j} b_{ij}T_1^iT_2^j$,
$a_{ij},b_{ij} \in \bk$.
\begin{enumerata}

\item The pair $(P,Q)$ determines the map $\bk^2 \to \bk^2$, $(x,y) \mapsto (P(x,y), Q(x,y))$,
where we define $P(x,y) = \sum_{i,j} a_{ij}x^iy^j$ and $Q(x,y) = \sum_{i,j} b_{ij}x^iy^j$.

\item Choose $\cgoth=(X,Y) \in \Cgoth$. Then $(P,Q,\cgoth)$ determines
 the morphism of schemes $f : \aff^2 \to \aff^2$ defined by stipulating that $f$ corresponds to the $\bk$-homomorphism
$A \to A$ which maps $X$ to $P(X,Y) = \sum_{i,j} a_{ij}X^iY^j$
and $Y$ to $Q(X,Y) = \sum_{i,j} b_{ij}X^iY^j$ ($P(X,Y), Q(X,Y) \in A = \bk[X,Y]$).

\end{enumerata}

\item Suppose that $\cgoth = (X,Y) \in \Cgoth$ has been chosen.
Then it is convenient to define morphisms of schemes $\aff^2 \to \aff^2$ simply by
giving the corresponding polynomial maps $\bk^2 \to \bk^2$
(this will be done repeatedly in Section~\ref{SECCompositionsofsimpleaffinecontractions}).
To do so, we abuse language as follows: given $P,Q \in \bk[T_1,T_2]$, the sentence
$$
\textit{``$f : \aff^2 \to \aff^2$ is defined by $f(x,y) = (P(x,y),Q(x,y))$''} 
$$
means that $f : \aff^2 \to \aff^2$ is the morphism of schemes determined by $(P,Q,\cgoth)$ as in 
remark~\eqref{931--dj1092hd299}.
For instance one can define $f : \aff^2 \to \aff^2$ by $f(x,y) = (x,xy)$, always keeping
in mind that this $f$ depends on $\cgoth$.

\end{enumerata}
\end{parag}

\begin{lemma}  \label {dfjpaejflka}
Let $F,G \in A=\kk2$, where $\bk$ is any field, and suppose that each
of $F,G$ is a variable of $A$.
Consider the ideal $(F,G)$ of $A$ generated by $F$ and $G$.
\begin{enumerata}

\item If $(F,G)=A$ then $G = aF+b$ for some $a,b \in \bk^*$.

\item  If $A/(F,G) = \bk$ then $A = \bk[F,G]$.

\end{enumerata}
\end{lemma}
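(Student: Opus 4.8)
The plan is to exploit two standard facts about a variable $V$ of $A=\kk2$: it is a prime element (because $A/(V)\isom\kk1$ is a domain), and the quotient $A/(V)$ is a polynomial ring in one variable over $\bk$. Throughout I complete $F$ to a coordinate system: since $F$ is a variable I may fix $Y$ with $A=\bk[F,Y]$, and I will regard $G$ as a polynomial in $Y$ with coefficients in $\bk[F]$. Note that nothing here uses that $\bk$ is algebraically closed.

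For part (a), I first reduce modulo $F$. In $A/(F)=\bk[\bar Y]=\kk1$ the hypothesis $(F,G)=A$ says exactly that the image $\bar G$ of $G$ is a unit, and the units of $\kk1$ are precisely $\bk^*$; hence $\bar G=b$ for some $b\in\bk^*$, i.e.\ $G=b+FH$ for some $H\in A$. Now I bring in that $G$ is a variable: then $G-b=FH$ is again a variable, hence irreducible, and since $F$ is a non-unit this forces $H\in A^*=\bk^*$. Writing $H=a$ gives $G=aF+b$ with $a,b\in\bk^*$, as required.

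For part (b) the cleaner route is to reduce modulo $G$ rather than modulo $F$, i.e.\ to restrict attention to the curve $C_G=\{G=0\}$. Because $G$ is a variable, $B:=A/(G)\isom\kk1$; write $\bar F,\bar Y$ for the images of $F,Y$ in $B$. The hypothesis $A/(F,G)=\bk$ says $B/(\bar F)=\bk$, and since a nonzero element $p\in\kk1$ satisfies $\dim_\bk\kk1/(p)=\deg p$, this forces $\bar F$ to have degree one in $B$ (it is certainly nonzero, as $F\notin(G)$). Consequently $B=\bk[\bar F]$, so that $\bar Y=q(\bar F)$ for some one-variable polynomial $q$; hence $Y-q(F)$ lies in the kernel $(G)$ of $A\to B$, and I may write $Y-q(F)=GS$ with $S\in A$.

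The final step is a degree count in $Y$. The left-hand side has $Y$-degree $1$, so $\deg_Y G+\deg_Y S=1$; one checks $G\notin\bk[F]$ (otherwise $A/(F,G)$ could not equal $\bk$), so $\deg_Y G=1$ and $S=S(F)\in\bk[F]$. Writing $G=a(F)Y+b(F)$ and comparing coefficients of $Y$ in $Y-q(F)=GS$ yields $a(F)S(F)=1$, whence $a(F)\in\bk^*$; then $Y=a(F)^{-1}\bigl(G-b(F)\bigr)\in\bk[F,G]$ and so $A=\bk[F,Y]=\bk[F,G]$. I expect the main obstacle to be exactly this part: isolating where the variable hypothesis on $G$ is genuinely needed, as opposed to mere irreducibility. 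The degree-one conclusion $B=\bk[\bar F]$, which translates the intersection hypothesis into the statement that $F$ restricts to a coordinate on $C_G\isom\aff^1$, is what makes the degree comparison available and is the crux of the argument.
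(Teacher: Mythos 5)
Your proof is correct. Part (a) is essentially the paper's own argument: both reduce modulo $F$, identify the image of $G$ as a unit $b \in \bk^*$ of $A/(F) \isom \kk1$, and then use irreducibility of the variable $G-b$ to force $G-b = aF$ with $a \in \bk^*$. Part (b), however, takes a genuinely different route. The paper completes $F$ to a coordinate system $(X,Y)=(F,Y)$, writes $G = P(X,Y)$, and invokes the standard (but not definition-level) fact that a variable $G \notin \bk[X]$ is ``almost monic'' in $Y$, i.e.\ $P = aY^d + p_1(X)Y^{d-1} + \cdots + p_d(X)$ with $a \in \bk^*$; reducing modulo $X$ then gives $\deg P(0,Y) = 1$, hence $d=1$, $G = aY + p_1(X)$, and $\bk[F,G]=A$ is immediate. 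You instead reduce modulo $G$, using only the fact, immediate from the definition of a variable, that $B = A/(G) \isom \kk1$: the hypothesis forces $\bar F$ to be a degree-one polynomial in a generator of $B$, hence $B = \bk[\bar F]$, and lifting $\bar Y = q(\bar F)$ to $Y - q(F) = GS$, a $Y$-degree count in $\bk[F][Y]$ gives $\deg_Y G = 1$ with unit leading coefficient. Your version is more self-contained, since it replaces the almost-monicity input (which requires an argument of its own) by elementary degree counting; the paper's version is shorter once that fact is granted. The two small points you leave to the reader --- that $F \notin (G)$ and that $G \notin \bk[F]$ --- are indeed one-line consequences of the hypothesis $A/(F,G) = \bk$, exactly as you indicate, so there is no gap.
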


\begin{proof}
Choose $Y$ such that $A=\bk[F,Y]$ and define $X=F$.
Then $A=\bk[X,Y]$ and we may write $G$ as a polynomial in $X,Y$,
say $G=P(X,Y)$.

Suppose that $(F,G)=A$.
Then $1 \in (F,G) = (X,P(X,Y)) = (X,P(0,Y))$ implies $P(0,Y) \in \bk^*$.
Writing $P(0,Y)=b\in\bk^*$, we obtain that
$G-b= P(X,Y)-P(0,Y)$ is divisible by $X$;
as $G-b$ is irreducible, $G-b=aX=aF$ with $a\in \bk^*$, and (a) is proved.

To prove (b), we first observe that
the fact that $G=P(X,Y)$ is a variable of $\bk[X,Y]$ and
$P(X,Y) \not\in \bk[X]$ implies that $P$ is ``almost monic'' in $Y$, i.e., 
\begin{equation} \label{diufakjdsfllll}
P(X,Y) = aY^d + p_1(X)Y^{d-1} + \cdots + p_d(X)
\end{equation}
with $d \ge1$, $a \in \bk^*$ and $p_i(X) \in \bk[X]$ for $i=1,\dots, d$.
Now 
$$
\bk=A/(F,G)=\bk[X,Y]/(X,P(X,Y)) = \bk[X,Y]/(X,P(0,Y))
$$
implies that $\deg P(0,Y)=1$, so $d=1$ in \eqref{diufakjdsfllll}.
Then $G = aY+p_1(X)$ and $\bk[F,G] = \bk[X,aY+p_1(X)]=\bk[X,Y]=A$.
\end{proof}

From now-on, and until the end of this paper, we assume that 
$\bk$ is an algebraically closed field of arbitrary characteristic.
Consider $\aff^2 = \aff^2_\bk$.

\begin{definition}  \label {weaklyioufq923lksmdjjj}
Let $C_1, \dots, C_n$ ($n\ge0$) be distinct curves in $\aff^2$, and consider the set
$S = \{ C_1, \dots, C_n \}$. 
We say that $S$ is a {\it weakly admissible configuration\/} if 
\begin{enumerata}

\item each $C_i$ is a coordinate line;

\item for every choice of $i \neq j$ such that $C_i \cap C_j \neq \emptyset$,
$C_i \cap C_j$ is one point and the local intersection number
of $C_i$ and $C_j$ at that point is equal to $1$.

\end{enumerata}
\end{definition}

\begin{lemma}  \label {weak-dpfq32748rjkdkd}
Given distinct curves $C_1, \dots, C_n$ ($n\ge0$) in $\aff^2$,
the following are equivalent:
\begin{enumerata}

\item $\{ C_1, \dots, C_n \}$ is a weakly admissible configuration;

\item there exists a coordinate system of $\aff^2$ with respect to which 
all $C_i$ have degree~$1$.

\end{enumerata}
\end{lemma}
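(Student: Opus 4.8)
The plan is to prove the two implications separately; I expect (b)$\Rightarrow$(a) to be routine and (a)$\Rightarrow$(b) to carry all the content.

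For (b)$\Rightarrow$(a) I would fix a coordinate system $(X,Y)$ in which every $C_i$ has degree $1$, so that $C_i=\{a_iX+b_iY+c_i=0\}$ with $(a_i,b_i)\neq(0,0)$. Since a degree-$1$ polynomial in a coordinate system differs from $X$ by an affine automorphism, it is a variable, so each $C_i$ is a coordinate line, which gives~(i). For~(ii), if $C_i\cap C_j\neq\emptyset$ with $i\neq j$ then the two linear forms cannot be proportional (proportional forms give parallel lines, which are either equal, impossible since the $C_i$ are distinct, or disjoint), so the lines meet in a single point with linearly independent normals, i.e.\ transversally, and the local intersection number there equals $1$.

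For (a)$\Rightarrow$(b) the plan is to manufacture one good coordinate system, splitting according to whether some pair of the $C_i$ meets. If no two meet, I would anchor on a single curve: choosing $(X,Y)$ with $C_1=\{X=0\}$, each $C_i$ ($i\geq2$) is disjoint from $C_1$, so the regular function $X|_{C_i}$ on $C_i\isom\aff^1$ never vanishes, hence is a nonzero constant, and $C_i=\{X=e_i\}$ has degree~$1$; alternatively $(X,G_i)=A$ and Lemma~\ref{dfjpaejflka}(a) gives $G_i=a_iX+b_i$. If some pair meets, I would reindex so $C_1\cap C_2\neq\emptyset$; weak admissibility makes this a single reduced point, so $A/(F_1,F_2)=\bk$ and Lemma~\ref{dfjpaejflka}(b) shows that $(X,Y):=(F_1,F_2)$ is a coordinate system, in which $C_1=\{X=0\}$ and $C_2=\{Y=0\}$.

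The heart of the proof will then be to show, in this second case, that every remaining $C_i$ has degree~$1$ in these coordinates. Since $C_i$ is a coordinate line, $C_i\isom\aff^1$ and the restrictions $X|_{C_i},Y|_{C_i}$ lie in a polynomial ring $\bk[t]$; the key point is that, $C_i$ being smooth, the local intersection number of $C_i$ with $\{X=0\}$ at a point equals the order of vanishing of $X|_{C_i}$ there. Thus the hypothesis that $C_i\cap\{X=0\}$ is empty or a single transversal point forces $\deg_t(X|_{C_i})\leq1$, and likewise $\deg_t(Y|_{C_i})\leq1$. If one restriction is constant then $C_i$ is parallel to an axis; otherwise $t\mapsto(X|_{C_i},Y|_{C_i})$ is an affine-linear parametrization of $C_i$, and eliminating $t$ presents $C_i$ as $\{\gamma X-\alpha Y+\text{const}=0\}$, of degree~$1$. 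The hard part will be exactly this translation --- recognizing that ``one transversal intersection with a coordinate axis'' should be read as a degree bound on the restriction of a coordinate function to the $\aff^1$ that is $C_i$ --- and checking that the subcases (disjoint from an axis versus meeting both axes) are exhaustive; once that viewpoint is in place the remaining computations are immediate.
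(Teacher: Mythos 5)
Your proof is correct, and its skeleton coincides with the paper's: both arguments split into the case where the $C_i$ are pairwise disjoint (handled identically, via Lemma~\ref{dfjpaejflka}(a) applied to the pairs $(F_1,F_i)$, or equivalently via the observation that $X|_{C_i}$ is a unit) and the case where some two curves meet, and both then anchor on a meeting pair $C_1\cap C_2\neq\emptyset$, using Lemma~\ref{dfjpaejflka}(b) to promote $(F_1,F_2)$ to a coordinate system. Where you genuinely diverge is in the last and hardest step, showing that each remaining $C_i$ ($i\ge3$) has degree $1$ in these coordinates. The paper argues algebraically, by a trichotomy on which of $C_1,C_2$ the curve $C_i$ meets: if $C_i$ misses one of them, Lemma~\ref{dfjpaejflka}(a) applies directly; if it meets both, the paper uses $\bk[F_i,F_1]=A=\bk[F_i,F_2]$ to write $F_2=aF_1+\beta(F_i)$ with $a\in\bk^*$ and $\beta\in\bk[T]$, and a degree count on $\beta(F_i)=X_2-aX_1$ then forces $\deg F_i=1$. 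You instead argue geometrically and uniformly: since $C_i\isom\aff^1$ is smooth and $\bk$ is algebraically closed, $\deg_t(X|_{C_i})$ equals the total intersection number of $C_i$ with $C_1$, which weak admissibility caps at $1$ (and similarly for $Y|_{C_i}$ and $C_2$), so $C_i$ is either parallel to an axis or admits an affine-linear parametrization, hence is a line of degree $1$ in either case. Each route buys something: the paper's stays entirely inside the algebra of variables and Lemma~\ref{dfjpaejflka}, avoiding intersection theory; yours needs no adjacency case analysis and, as a bonus, only uses that the $C_i$ with $i\ge3$ are lines (isomorphic to $\aff^1$) rather than coordinate lines, the coordinate-line hypothesis entering only through the anchoring pair $C_1,C_2$ (or through $C_1$ alone in the disjoint case).
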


\begin{proof}
We show that (a) implies (b), the converse being trivial.
Suppose that (a) holds.
Write $\aff^2 = \Spec A$ where $A = \kk2$.
We may assume that $n\ge2$, otherwise the assertion is trivial.
Let $F_1, \dots, F_n \in A$ be variables of $A$ whose zero-sets
are $C_1, \dots, C_n$ respectively.
Condition (a) implies that, whenever $i \neq j$, we have either $(F_i,F_j)=A$ or $A/(F_i,F_j)=\bk$.
Consider the graph $G$ whose vertex-set is $\{ F_1, \dots, F_n \}$
and in which distinct vertices $F_i,F_j$ are joined by an edge
if and only if $A/(F_i,F_j)=\bk$.

In the case where $G$ is discrete, \ref{dfjpaejflka} implies
that $F_i=a_iF_1+b_i$, $a_i,b_i \in \bk^*$, for $i=2,\dots,n$.
Let $X=F_1$ and let $Y$ be such that $A=\bk[X,Y]$.
Then all $F_i$ have degree $1$ with respect to the coordinate system $(X,Y)$.

From now-on, assume that $G$ is not discrete.
Then we may assume that $F_1, F_2$ are joined by an edge.
By \ref{dfjpaejflka}, $\bk[F_1,F_2]=A$.
Let $X_1=F_1$ and $X_2=F_2$, then $A=\bk[X_1,X_2]$ and for
each $i \in \{3, \dots, n \}$ we have:
\begin{itemize}

\item if $F_i, F_1$ are not joined by an edge
then \ref{dfjpaejflka} implies that $F_i = a_i X_1 + b_i$ 
for some $a_i,b_i \in \bk^*$, so $F_i$ has degree $1$ with respect to $(X_1,X_2)$;

\item if $F_i, F_2$ are not joined by an edge then $F_i = a_i X_2 + b_i$
for some $a_i,b_i \in \bk^*$, so $F_i$ has degree $1$ with respect to $(X_1,X_2)$;

\item if $F_i$ is linked to each of $F_1, F_2$ by edges, then $\bk[F_i,F_1] = A = \bk[F_i,F_2]$,
so $F_2 = aF_1+\beta(F_i)$ for some $a \in \bk^*$ and $\beta(T) \in \bk[T]$;
then $\beta( F_i ) = X_2 - aX_1$ has degree $1$ with respect to $(X_1,X_2)$ and consequently
$F_i$ has degree $1$.

\end{itemize}
So all $F_i$ have degree $1$ with respect to the coordinate system $(X_1,X_2)$.
\end{proof}

\begin{parag}  \label{difupawejk}
Let $C_1, \dots, C_n$ ($n \ge0$) be distinct curves in a
nonsingular surface $W$.
We say that $\sum_{i=1}^n C_i$ is an {\it SNC-divisor\/} of $W$ if:
\begin{itemize}

\item each $C_i$ is a nonsingular curve;

\item for every choice of $i \neq j$ such that $C_i \cap C_j \neq \emptyset$,
$C_i \cap C_j$ is one point and the local intersection number
of $C_i$ and $C_j$ at that point is equal to $1$;

\item if $i,j,k$ are distinct then $C_i \cap C_j \cap C_k = \emptyset$.

\end{itemize}
If $D=\sum_{i=1}^n C_i$ is an SNC-divisor of $W$
we write $\Geul(D,W)$ for the graph whose vertex set is 
$\{ C_1, \dots, C_n \}$ 
and in which distinct vertices $C_i$, $C_j$ are joined by an edge if
and only if $C_i \cap C_j \neq \emptyset$.
\end{parag}

\begin{definition}  \label {ioufq923lksmdjjj}
Let $C_1, \dots, C_n$ ($n\ge0$) be distinct curves in $\aff^2$.
We say that the set $\{ C_1, \dots, C_n \}$ is an
{\it admissible configuration\/} if 
\begin{enumerata}

\item each $C_i$ is a coordinate line;

\item $D = \sum_{i=1}^n C_i$ is an SNC-divisor of $\aff^2$;

\item the graph $\Geul(D,\aff^2)$ defined in \ref{difupawejk}
is a forest.

\end{enumerata}
\end{definition}

\begin{proposition}  \label {dpfq32748rjkdkd}
Given distinct curves $C_1, \dots, C_n$ ($n\ge0$) in $\aff^2$,
the following are equivalent:
\begin{enumerata}

\item $\{ C_1, \dots, C_n \}$ is an admissible configuration;

\item there exists a coordinate system $(X,Y)$ of $\aff^2$ 
such that $\bigcup_{i=1}^n C_i$ is the zero-set of $\phi(X)Y^j$
for some $j \in \{0,1\}$ and some 
$\phi(X) \in \bk[X]\setminus\{0\}$.

\end{enumerata}
\end{proposition}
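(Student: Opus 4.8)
I would dispose of the routine direction (b)$\Rightarrow$(a) first. Given coordinates $(X,Y)$ in which $\bigcup_i C_i$ is the zero-set of $\phi(X)Y^j$, the $C_i$ are exactly the vertical lines $X=a$ (for the roots $a$ of $\phi$) together with the line $Y=0$ when $j=1$. Each of these is a coordinate line, distinct vertical lines are disjoint, the line $Y=0$ meets each vertical line transversally in a single point, and no three of the curves share a point; hence $D=\sum_i C_i$ is an SNC-divisor. The graph $\Geul(D,\aff^2)$ is discrete when $j=0$ and a star centred at $Y=0$ when $j=1$, so it is a forest in either case. Thus $\{C_1,\dots,C_n\}$ is admissible, and I can concentrate on (a)$\Rightarrow$(b).

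For (a)$\Rightarrow$(b), I would first note that an admissible configuration is in particular weakly admissible, since the SNC condition contains exactly the pairwise-intersection requirement of Definition~\ref{weaklyioufq923lksmdjjj}. So Lemma~\ref{weak-dpfq32748rjkdkd} supplies a coordinate system $(X,Y)$ in which every $C_i$ has degree $1$; after fixing this system I may regard each $C_i$ as an ordinary line $\ell_i$ in the plane. The key observation is then purely combinatorial: two distinct lines meet if and only if they are not parallel, and parallelism is an equivalence relation, so if I group the $\ell_i$ into parallel classes, the graph $\Geul(D,\aff^2)$ becomes the complete multipartite graph whose parts are these classes.

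The plan is to read off the possible shapes of the configuration from the requirement that this complete multipartite graph be a forest. Such a graph contains a triangle as soon as it has three nonempty parts, and it contains a $4$-cycle as soon as it has two parts each of size at least $2$; conversely a graph with a single part is edgeless, and a graph with two parts one of which is a singleton is a star. Hence the forest condition forces exactly one of two situations: either all the $\ell_i$ are mutually parallel, or there are precisely two parallel classes, one of which consists of a single line $L$. I expect this classification to be the heart of the matter.

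It then remains to produce the asserted coordinate system in each case, which is elementary. If all lines are parallel they share a common linear part $aX+bY$; since a nonzero linear form is a variable, I complete $X'=aX+bY$ to a coordinate system $(X',Y')$, in which the $\ell_i$ become the vertical lines $X'=c_i$ and $\bigcup_i C_i$ is the zero-set of $\phi(X')=\prod_i(X'-c_i)$, giving $j=0$. If instead there are two classes with singleton $L$, I first make the larger class vertical as above, so that those lines are $X'=c_i$ while $L$, being non-parallel to them, has an equation with nonzero $Y'$-coefficient; replacing $Y'$ by that equation (a shear followed by a translation, which fixes the vertical direction and is an automorphism of $\aff^2$) turns $L$ into $Y''=0$ and keeps the others vertical. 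Now $\bigcup_i C_i$ is the zero-set of $\phi(X')Y''$ with $j=1$, completing the argument. The main obstacle is recognizing and justifying that $\Geul(D,\aff^2)$ is complete multipartite and classifying when such a graph is a forest; once that is in hand, the coordinate changes are routine affine maps.
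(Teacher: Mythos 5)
Your proof is correct and takes essentially the same route as the paper's: both reduce via Lemma~\ref{weak-dpfq32748rjkdkd} to a coordinate system in which all the $C_i$ are lines of degree $1$, then use the forest/SNC conditions to show the configuration is either all-parallel lines or parallel lines plus one transversal line, and finish with a coordinate change. Your complete-multipartite-graph observation and the explicit shear merely fill in details the paper leaves to the reader (with the minor bonus that your triangle argument rules out three pairwise non-parallel lines using the forest condition alone, whether or not they are concurrent).
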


\begin{proof}
It's enough to show that (a) implies (b), as the converse is trivial.
Assume that (a) holds.
By \ref{weak-dpfq32748rjkdkd}, we may choose a coordinate system which respect to which 
all $C_i$ have degree $1$.
As $D = \sum_{i=1}^n C_i$ is an SNC-divisor and $\Geul(D,\aff^2)$ is a forest,
$\bigcup_{i=1}^n C_i$ must be one of the following:
\begin{itemize}

\item a union of $n$ parallel lines;

\item a union of $n-1$ parallel lines with another line.

\end{itemize}
Indeed, any other configuration of lines would either contain three concurrent lines
or produce a circuit in the graph.  Now it is clear that (b) is satisfied.
\end{proof}

\section{Birational morphisms $f : X \to Y$ of nonsingular surfaces
\\ with special emphasis on the case $X=Y=\aff^2$}
\label{SEC:BirEndos}

Throughout this section, $\bk$ is an algebraically closed field of arbitrary characteristic
and we abbreviate $\aff^2_\bk$ to~$\aff^2$.
We consider birational morphisms $f : X \to Y$, where $X$ and $Y$ are nonsingular algebraic
surfaces over $\bk$ (a morphism $f : X \to Y$ is {\it birational\/} if there
exist Zariski-open subsets $\emptyset \neq U \subseteq X$ and $\emptyset \neq V \subseteq Y$
such that $f$ restricts to an isomorphism $U \to V$).
We are particularly interested in the case $X = \aff^2 = Y$.

Essentially all the material given in \ref{idjfawe8738324}--\ref{788hkckdssljkhjh25}
 can be found in \cite{Dai:bir}.
From \ref{g64f4j3k21ld} to the end of the section, the material appears to be new
(except \ref{938hdb67-gdhifa}\eqref{000004}).

\begin{parag}  \label {idjfawe8738324}
Let $f: X \to Y$ and $f': X' \to Y'$ be birational morphisms of nonsingular surfaces.
We say that $f,f'$ are {\it equivalent\/} ($f \sim f'$) if
there exist isomorphisms $x : X \to X'$ and 
$y : Y \to Y'$ such that $y \circ f = f' \circ x$.
\end{parag}

\begin{parag}[{\cite[1.2]{Dai:bir}}]  \label {dkfjqp923p1`2m3k23}
Let $f : X \to Y$ be a birational morphism of nonsingular surfaces.
A {\it missing curve\/} of $f$ is a curve $C \subset Y$ whose intersection
with the image of $f$ is a finite set of points.
We write $\Miss(f)$ for the set of missing curves of $f$,
$q(f)$ for the cardinality of $\Miss(f)$ and 
$q_0(f)$ for the number of missing curves of $f$ which are disjoint from $f(X)$.
A {\it contracting curve\/} of $f$ is a curve $C \subset X$ such that
$f(C)$ is a point.
The set of contracting curves of $f$ is denoted $\Cont(f)$,
and $c(f)$ denotes the cardinality of $\Cont(f)$.
The natural numbers $q(f)$, $q_0(f)$ and $c(f)$ are 
invariant with respect to equivalence (\ref{idjfawe8738324}) of birational morphisms,
i.e., $f \sim f' \Rightarrow c(f)=c(f')$ and similarly for $q$ and $q_0$.
Call a point of $Y$ a {\it fundamental point\/} of $f$
if it is $f(C)$ for some contracting curve $C$ of $f$.
The set of fundamental points of $f$ is also called the
{\it center\/} of $f$, denoted $\cent(f)$.
The {\it exceptional locus\/} of $f$ is defined to be
$\exc(f) = f^{-1}\big( \cent(f) \big)$, and is equal to
the union of the contracting curves of $f$.
\end{parag}

\begin{parag}[{\cite[1.1 and 1.2]{Dai:bir}}]  \label {diofj;askdjf}
Let $f : X \to Y$ be a birational morphism of nonsingular surfaces.
There exists a commutative diagram
\begin{equation}  \label {mindec}
\xymatrix{
Y_n \ar[r]^{\pi_n} & \cdots \ar[r]^{\pi_1} & Y_0\\
X \ar @{^{(}->}[u] \ar[rr]_{ f }  &&  Y \ar @{=}[u] 
}
\end{equation}
where ``$\hookrightarrow$'' denotes an open immersion and, for each $i$,
$\pi_i : Y_i \to Y_{i-1}$ is the blowing-up of the nonsingular surface $Y_{i-1}$
at a point $P_i \in Y_{i-1}$.

Define $n(f)$ to be the least natural number $n$ for which there exists
a diagram \eqref{mindec}.
Note that $n(f)$ is invariant with respect to equivalence of birational morphisms.

For each $i \in \{ 1, \dots, n \}$, consider the exceptional curve
$E_i = \pi_i^{-1}(P_i) \subset Y_i$, and let the same symbol $E_i$
also denote the strict transform of $E_i$ in $Y_n$.
It is clear that the union of the contracting curves of $f$ is
the intersection of $E_1 \cup \dots \cup E_n \subset Y_n$
with the open subset $X$ of $Y_n$; thus:
\begin{equation}  \label{difwkejlkaskkkkkk}
c(f) \le n(f),
\end{equation}
\begin{equation}   \label {23748rhf63t}
\begin{minipage}[t]{.9\textwidth} \it
each contracting curve is nonsingular and rational,
$D = \sum_{C \in \Cont(f)} \! C$\ \ is
an SNC-divisor of $X$ and the graph $\Geul(D,X)$ is a forest.\footnotemark
\end{minipage}
\end{equation}
\footnotetext{Note that contracting curves are not necessarily isomorphic to $\aff^1$. 
So, in the case $X = \aff^2 = Y$, $\Cont(f)$ is not necessarily an admissible configuration
in the sense of \ref{ioufq923lksmdjjj}.}
Given $i \in \{1,\dots,n\}$, consider the complete curve $E_i \subset Y_n$.
Note that if $S$ is a projective nonsingular surface
and $\mu : Y_n \hookrightarrow S$ is an open immersion, 
the self-intersection number of $\mu(E_i)$ in $S$ is independent of the 
choice of $(S,\mu)$; we denote this number by $(E_i^2)_{Y_n}$.
Then the following holds (cf.\ \cite[1.2(c)]{Dai:bir}):
\begin{equation}  \label {d023876g77f}
\begin{minipage}[t]{.8\textwidth}
\it
Diagram \eqref{mindec} satisfies $n=n(f)$ if and only if
$(E_i^2)_{Y_n} \le -2$ holds for all $i \in \{1, \dots, n\}$
such that $E_i \subseteq Y_n \setminus X$.
\end{minipage}
\end{equation}
In particular, if Diagram \eqref{mindec} satisfies $n=n(f)$ then:
\begin{equation} \label {1329jf9m39c37hf}
\cent(f) = \setspec{ (\pi_{1} \circ \cdots \circ \pi_{i-1})(P_i) }{ 1 \le i \le n} .
\end{equation}
\end{parag}

The following remarks are trivial consequences of \ref{diofj;askdjf}, but are very useful:

\begin{remarks} \label {9238ch928nhf8n34r}
Let $f : X \to Y$ be a birational morphism of nonsingular surfaces.
\begin{enumerata}

\item \label {pfq903012cm0mjf-i} 
For each $C \in \Miss(f)$, we have $C \cap f(X) \subseteq \cent(f)$.
In particular, the condition $q_0(f)=0$ is equivalent to ``every missing curve contains a fundamental point''.

\item \label {023c093mrfh9129jf-ii} 
Let $C \subset Y$ be a curve.
Then there exists {at most one} curve $C' \subset X$ such that $f(C')$ is a dense subset of $C$.
Moreover, $C'$ exists if and only if $C \notin \Miss(f)$.

\end{enumerata}
\end{remarks}

\begin{lemma}  \label {1dijfaksdjfall00100}
If $\aff^2 \xrightarrow{f}\aff^2 \xrightarrow{g}\aff^2$
are birational morphisms then $n(g\circ f) = n(g) + n(f)$.
\end{lemma}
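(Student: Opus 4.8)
The plan is to produce a single factorization of $g\circ f$ of length $n(f)+n(g)$ and then verify, via the minimality criterion \eqref{d023876g77f}, that it already has the least possible length; since that criterion says exactly when a factorization attains $n(\,\cdot\,)$, this settles both inequalities at once. Write $f:A\to B$ and $g:B\to C$ for the three copies of $\aff^2$, and fix minimal decompositions $A\hookrightarrow Y_k\to\cdots\to Y_0=B$ of $f$ (with $k=n(f)$) and $B\hookrightarrow Z_N\to\cdots\to Z_0=C$ of $g$ (with $N=n(g)$). The centers of the blow-ups realizing $f$ are points of $B$, and $B$ is open in $Z_N$, so I would transport the tower of $f$ over $Z_N$: blowing up the same (infinitely near) points of $Z_N$ produces $W_k\to\cdots\to W_0=Z_N$ with $Y_k$, and hence $A$, open in $W_k$. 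One checks that the concatenation $A\hookrightarrow W_k\to\cdots\to W_0=Z_N\to\cdots\to Z_0=C$ realizes $g\circ f$ and has length $k+N$.

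To apply \eqref{d023876g77f} I must check that every exceptional curve contained in $W_k\setminus A$ has self-intersection $\le -2$. I would sort the $k+N$ exceptional curves into three families. The $k$ curves coming from the tower of $f$ keep their self-intersection numbers under the open immersion $Y_k\hookrightarrow W_k$, and such a curve lies in $W_k\setminus A$ if and only if its avatar in $Y_k$ lies in $Y_k\setminus A$, so minimality of the decomposition of $f$ gives the bound. The curves coming from the tower of $g$ that are \emph{disjoint} from $B$ are untouched by the blow-ups realizing $f$ (whose centers lie in $B$), so they retain the self-intersections they had in $Z_N$, and minimality of the decomposition of $g$ gives the bound.

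The heart of the matter is the third family: the exceptional curves $F$ of the tower of $g$ that \emph{meet} $B$, for which $\Gamma:=F\cap B$ is a contracting curve of $g$. If $\Gamma\notin\Miss(f)$, then by \ref{9238ch928nhf8n34r}\eqref{023c093mrfh9129jf-ii} some curve of $A$ dominates $\Gamma$, the strict transform of $F$ meets $A$, and no check is needed. If $\Gamma\in\Miss(f)$, then $f(A)\cap\Gamma$ is finite, so $f^{-1}(\Gamma)$ is finite, the strict transform of $F$ is disjoint from $A$, and its self-intersection must be bounded. Two inputs do this. First, the self-intersection of an exceptional curve equals $-1$ when it is created and can only decrease under further blowing up, so $(F^2)_{Z_N}\le -1$. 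Second, $q_0(f)=0$: no missing curve of a birational endomorphism of $\aff^2$ is disjoint from the image. This is a one-line Nullstellensatz argument — writing $\Gamma=V(G)$, if $f(A)\cap\Gamma=\emptyset$ then $G\circ f$ would be a nowhere-vanishing regular function on $\aff^2$, hence a nonzero constant, forcing $f(A)$ into the proper closed set $\{G=c\}$ and contradicting the dominance of $f$. Combining $q_0(f)=0$ with \ref{9238ch928nhf8n34r}\eqref{pfq903012cm0mjf-i}, the missing curve $\Gamma$ contains a fundamental point $Q\in\cent(f)$; since $Q\in\Gamma\subseteq F$ is a center of the tower realizing $f$, blowing it up lowers the self-intersection of $F$, giving $(F^2)_{W_k}\le -2$.

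Every exceptional curve in $W_k\setminus A$ then satisfies the bound, so the concatenated decomposition is minimal and $n(g\circ f)=k+N=n(f)+n(g)$. The step I expect to be the main obstacle is precisely the third family: one must rule out that a contracting curve of $g$ which happens to be missing for $f$ could avoid all the centers of $f$ while its governing exceptional curve is a $(-1)$-curve — for then the concatenation would fail to be minimal and additivity would break (as it genuinely does for birational morphisms of arbitrary nonsingular surfaces). It is the affine-plane input $q_0(f)=0$ that closes this gap.
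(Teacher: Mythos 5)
Your proof is correct, but it takes a different route from the paper, whose entire proof of this lemma is a citation: ``Follows from \cite[2.12]{Dai:bir}.'' What you have done, in effect, is reconstruct from scratch the content behind that citation. The paper's remark following \ref{iuqr2378632r6} identifies the two ingredients of the cited result: additivity $n(g\circ f)=n(g)+n(f)$ holds for birational morphisms of nonsingular surfaces whenever $q_0(f)=0$ (this is \cite[1.3]{Dai:bir}), and $q_0(f)=0$ holds whenever $\Gamma(X,\Oeul_X)^*=\bk^*$ and $\Cl(Y)=0$ (this is \cite[2.11]{Dai:bir}). Your argument proves exactly these two facts in the case at hand: the concatenation of minimal towers, the verification of the criterion \eqref{d023876g77f} curve by curve, and the units-of-$\bk[x,y]$ argument for $q_0(f)=0$. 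Your version buys self-containedness and makes visible precisely where the affine-plane hypothesis enters (your ``third family'' is indeed the crux, and additivity does fail without $q_0(f)=0$); the paper's version buys brevity at the cost of opacity.

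One intermediate assertion of yours is false, though harmlessly so: when $\Gamma\in\Miss(f)$, the set $f^{-1}(\Gamma)$ need \emph{not} be finite --- it contains every contracting curve of $f$ lying over the fundamental points of $f$ on $\Gamma$ (already for the SAC $f(x,y)=(x,xy)$ with $\Gamma$ its missing curve, $f^{-1}(\Gamma)$ is a line). The conclusion you draw from it, that the strict transform of $F$ in $W_k$ is disjoint from $A$, is nevertheless true, but for a different reason: the strict transform of $F$ meets the preimage of $B$ exactly in the strict transform of $\Gamma$, and if that met $A$ it would give a curve in $A$ dominating $\Gamma$, contradicting $\Gamma\in\Miss(f)$ via \ref{9238ch928nhf8n34r}\eqref{023c093mrfh9129jf-ii}. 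In any case your logic does not depend on this disjointness: the minimality criterion only requires the bound $(F^2)_{W_k}\le -2$ for curves contained in $W_k\setminus A$, and you establish that bound unconditionally in this case, which is (more than) enough.
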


\begin{proof}
Follows from \cite[2.12]{Dai:bir}.
\end{proof}

\begin{lemma}  \label {dijfaksdjfall00000}
Let $f : \aff^2 \to \aff^2$ be a birational morphism.
\begin{enumerata}

\item  \label{000001}
$q(f) = c(f) \le n(f)$

\item  \label {000002}
$f$ is an isomorphism $\iff n(f)=0 \iff c(f)=0 \iff q(f)=0$.

\item  \label{000003}
Each missing curve of $f$ is rational with one place at infinity.

\item  \label{POdsifias2874wke}
Each fundamental point belongs to some missing curve; 
each missing curve contains some fundamental point.

\item  \label{000006}
If a point $P$ is a singular point of some missing curve of $f$,
or a common point of two missing curves, then $P$ is a fundamental
point of $f$.

\end{enumerata}
\end{lemma}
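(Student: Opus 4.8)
\emph{The plan is to compactify the situation and read everything off the boundary divisor of the source plane.} I would start from a minimal diagram \eqref{mindec}, so that $f = \pi \circ j$ with $j : X = \aff^2 \hookrightarrow Y_n$ an open immersion and $\pi = \pi_1 \circ \cdots \circ \pi_n : Y_n \to Y_0 = \aff^2$ a composition of blow-ups whose centers lie over $\cent(f) \subseteq \aff^2$ by \eqref{1329jf9m39c37hf}. Since these centers avoid the line at infinity, $\pi$ extends to a morphism $\overline{\pi} : \overline{Y}_n \to \proj^2$ of smooth projective surfaces, where $\overline{Y}_n \supseteq Y_n$ and $L' := \overline{Y}_n \setminus Y_n$ is the strict transform of the target line at infinity. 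Writing $Z = Y_n \setminus X$, the surface $\overline{Y}_n$ is a smooth projective compactification of the source plane $X \isom \aff^2$ with boundary $B := \overline{Y}_n \setminus X = Z \cup L'$. First I would record the dictionary: a curve $C \subset Y_0$ is missing if and only if its strict transform $\widetilde{C}$ under $\pi$ lies in $Z$ (as $\widetilde C \to C$ is birational, $C \cap f(X)$ is finite exactly when $\widetilde C \cap X$ is empty); and the irreducible components of $Z$ are precisely the exceptional curves $E_i \subseteq Z$ together with the $\widetilde C$ for $C$ missing. Since each $E_i$ meeting $X$ cuts out exactly one contracting curve and distinct $E_i$ give distinct ones, $c(f) = \#\{\, i : E_i \cap X \neq \emptyset \,\}$, so $Z$ has $(n - c(f)) + q(f)$ components.

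For (a), the key input is that $\Pic(\aff^2) = 0$ and $\Gamma(\aff^2,\mathcal{O})^\ast = \bk^\ast$. These make the map $\bigoplus_D \Integ\,[D] \to \Pic(\overline{Y}_n)$, running over the components $D$ of $B$, an isomorphism: it is surjective because $\Pic(X) = 0$, and injective because a relation $\sum a_D[D] = 0$ means $\sum a_D D = \div(h)$ with $h \in \Gamma(X,\mathcal{O})^\ast = \bk^\ast$, forcing all $a_D = 0$. Hence the number of components of $B$ is $\rank \Pic(\overline{Y}_n) = 1 + n$; counting them as $1 + (n - c(f)) + q(f)$ gives $q(f) = c(f)$, which together with \eqref{difwkejlkaskkkkkk} is (a). Part (b) follows quickly: $n(f) = 0$ means $f$ is an open immersion $\aff^2 \to \aff^2$, hence an isomorphism (removing a curve from $\aff^2$ cannot leave an affine plane); then $n(f) = 0 \Rightarrow c(f) = 0 \Rightarrow q(f) = 0$ by \eqref{difwkejlkaskkkkkk} and (a); and $q(f) = 0$ forces $c(f) = 0$, so $f$ is quasi-finite and birational, hence an open immersion by Zariski's Main Theorem, hence an isomorphism.

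For (c) and (e) the plan is to transport the questions to $B$. By the structure theory of smooth compactifications of $\aff^2$ (the dual-graph analysis of the minimal resolution, as in the references), $B$ is a connected SNC divisor whose components are rational and whose dual graph is a tree. Granting this, each missing curve $C$ is rational, since $\widetilde C \subseteq Z \subseteq B$ is rational and $\pi : \widetilde C \to C$ is birational; and $C$ has one place at infinity because $\widetilde C$ and $L'$, two vertices of the SNC tree that must meet (as $\overline C$ meets the target line at infinity), meet in a single transversal point, which—$\overline\pi$ being an isomorphism near $L'$—says exactly that $\overline C$ meets the line at infinity in one point with one branch. Part (e) is then the SNC property read off over $Y_0 \setminus \cent(f)$: if a point $P \notin \cent(f)$ were singular on a missing curve, or common to two missing curves, then, $\pi$ being an isomorphism over $P$, the divisor $Z$ would fail to be SNC at $\pi^{-1}(P)$, contrary to the structure theory. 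The substantive point, which I would borrow rather than reprove, is precisely that the minimal resolution yields an SNC tree.

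Part (d) I expect to be the main obstacle, being the one statement that does not reduce to a formal property of $B$. For ``each missing curve contains a fundamental point'', I would pass to the birational inverse: extending $f^{-1}$ to a rational map $\overline g : \proj^2 \dashrightarrow \proj^2$ whose indeterminacy in the target $\aff^2$ is exactly $\cent(f)$, one checks that $\overline g$ sends the generic point of a missing curve $C$ into the source line at infinity; were $C$ to avoid $\cent(f)$, then $\overline g$ would be a morphism contracting $C$ into that line, and analysing this on a common resolution of $f$ and $f^{-1}$ should contradict the minimality of \eqref{mindec}. For the converse, I would begin from the observation that for $P \in \cent(f)$ the complete connected fibre $\overline\pi^{-1}(P)$ meets both $X$ (it carries a contracting curve) and the boundary $B$ (since $X \isom \aff^2$ contains no complete curve); the difficulty, and the place where the dual-graph combinatorics of the minimal resolution is genuinely needed, is to propagate along the tree $B$ from this fibre to an honest missing-curve component $\widetilde C$, thereby exhibiting a missing curve through $P$.
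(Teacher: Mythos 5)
Your parts (a) and (b) are correct and complete: the counting of components of $B$ against $\rank\Pic(\overline{Y}_n)=n+1$, with injectivity forced by $\Gamma(\aff^2,\Oeul)^*=\bk^*$, is precisely the mechanism behind the result the paper cites for (a) (\cite[4.3(a)]{Dai:bir}), and your Zariski's-Main-Theorem argument for (b) matches the paper's one-line observation.

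The genuine gap is the ``structure theory'' you grant yourself for (c) and (e). The claim that $B=\overline{Y}_n\setminus X$ is an SNC divisor with rational components and tree dual graph is not a citable theorem, and for the \emph{minimal} diagram it is false. Take $f=c\circ h$, where $c(x,y)=(x,xy)$ and $h$ is a SAC whose missing curve is the parabola $\bZ(X-Y^2)$ (conjugate $c$ by $(x,y)\mapsto(x+y^2,y)$). By \ref{Doifqrqkkdsjf8}, $\Miss(f)=\{\bZ(X),\ \bZ(Y^2-X^3)\}$, and $n(f)=2$ with both centers lying over the origin. The cuspidal cubic meets the line at infinity only at $[0:1:0]$, with intersection multiplicity $3$; since $\overline\pi$ is an isomorphism near infinity, its strict transform still meets $L'$ with multiplicity $3$, so $B$ is \emph{not} SNC. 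What is true (over $\Comp$, from $H^1(B)\cong H_3(\overline{Y}_n,X)=0$) is weaker: the components of $B$ are rational and the incidence graph is a forest, where tangencies and unibranch singular points create no cycles. That weaker statement does still yield (c) -- note that ``one place at infinity'' means one point and one branch, \emph{not} transversality, which your write-up conflates -- but it cannot yield (e): a cusp of a missing curve at a non-fundamental point produces neither genus nor a cycle, so no soft argument of this type excludes it; excluding it is exactly the content of the cited \cite[1.6]{Dai:tri2} (compare Proposition \ref{g64f4j3k21ld} of the paper, which needs the determinant machinery of \cite{Dai:bir}). Your SNC-based reading of (e) also fails in the other direction: two missing curves crossing transversally at a non-fundamental point would not violate SNC at all; what forbids that is the cycle they would form with $L'$ in the incidence graph. (If you instead pass to a log resolution to force SNC, you destroy the identification of the blown-up locus with $\cent(f)$, on which your deduction of (e) rests.) Finally, the paper works over an arbitrary algebraically closed field, so even the weak topological input needs an algebraic or \'etale substitute.

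Part (d) you yourself flag as unfinished, and it is; but both halves admit short arguments in the spirit of your (a), with no dual-graph propagation. If $C=\bZ(F)\in\Miss(f)$ contained no fundamental point then, since $C\cap f(X)\subseteq\cent(f)$ by \ref{9238ch928nhf8n34r}, the function $F\circ f$ would vanish nowhere on $\aff^2$, hence lie in $\bk^*$, contradicting dominance of $f$. The inclusion $\cent(f)\subseteq\Gamma_f$ follows from affineness of the source as in \ref{788hkckdssljkhjh25} (\cite[2.1]{Dai:bir}).
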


\begin{proof}
Equality $q(f)=c(f)$ in \eqref{000001} follows from \cite[4.3(a)]{Dai:bir},
and $c(f)\le n(f)$ was noted in \eqref{difwkejlkaskkkkkk}.
Assertion~\eqref{000002} follows from the observation that 
if $n(f)=0$ or $c(f)=0$ then $f$ is an open immersion
$\aff^2 \hookrightarrow \aff^2$ and hence an automorphism.
Assertion~\eqref{000003} follows from result 4.3(c) of \cite{Dai:bir}.
The first (resp.\ the second) assertion of \eqref{POdsifias2874wke} follows
from \cite[2.1]{Dai:bir} 
(resp.\ from the claim that $q_0(f)=0$, in \cite[4.3(a)]{Dai:bir}).
Refer to \cite[1.6]{Dai:tri2} for a proof of assertion~\eqref{000006}.
\end{proof}

Several of the above facts are stated in greater generality in \cite{Dai:bir}.
For instance, if $X \xrightarrow{f} Y \xrightarrow{g} Z$ are birational morphisms of nonsingular surfaces
and $q_0(f)=0$, then (by \cite[1.3]{Dai:bir}) $n(g \circ f) = n(g) + n(f)$.
Also, if $X,Y$ are nonsingular surfaces satisfying $\Gamma(X,\Oeul_X)^*=\bk^*$ and $\Cl(Y)=0$
then (by \cite[2.11]{Dai:bir}) every birational morphism $f : X \to Y$ satisfies $q_0(f)=0$.
The following generalization of \ref{dijfaksdjfall00000}\eqref{000001} also deserves to be noted:

\begin{lemma}  \label {iuqr2378632r6}
Let $f : X \to X$ be a birational morphism, where $X$ is a nonsingular surface.
Then $c(f)=q(f)$.
\end{lemma}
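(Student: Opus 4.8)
The plan is to deduce the equality from a single, more transparent formula valid for \emph{any} birational morphism $f:X\to Y$ of nonsingular surfaces, namely
\[
c(f)-q(f)=\big(\rho(X)-\rho(Y)\big)-\big(u(X)-u(Y)\big),
\]
where $\rho(Z)=\rank\Cl(Z)$ and $u(Z)=\rank\big(\Gamma(Z,\Oeul_Z)^*/\bk^*\big)$, and then to note that for $f:X\to X$ the target equals the source, so both parenthesized differences are literally $0$ and hence $c(f)=q(f)$. To set this up I would fix a factorization $X\overset{j}{\hookrightarrow}Y_n\xrightarrow{\pi}Y$ as in \ref{diofj;askdjf}, with $\pi=\pi_1\circ\cdots\circ\pi_n$ a composite of $n$ blow-ups of points and $j$ an open immersion (minimality of $n$ is not needed). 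Writing $E_1,\dots,E_n\subset Y_n$ for the (strict transforms of the) exceptional curves, the union of the contracting curves of $f$ is $(E_1\cup\cdots\cup E_n)\cap X$; thus each $E_i$ either meets $X$, in which case $E_i\cap X$ is a contracting curve, or is disjoint from $X$, and so exactly $c(f)$ of the $E_i$ meet $X$ while $n-c(f)$ of them lie in the complement $Y_n\setminus X$.

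The combinatorial heart of the argument is to count the irreducible components of $Y_n\setminus X$ (the codimension-$2$ part of the complement may be discarded, as it affects neither $\Cl$ nor global units). Every component $B$ is a curve; if $\pi(B)$ is a point then $B$ is one of the $E_i$ (giving the $n-c(f)$ exceptional boundary curves already found), and if $\pi(B)$ is a curve $C\subset Y$ then $B$ is the strict transform of $C$. By \ref{9238ch928nhf8n34r}\eqref{023c093mrfh9129jf-ii}, a curve $C\subset Y$ fails to be dominated by $X$ precisely when it is missing, i.e.\ a curve of $Y$ is missing if and only if its strict transform in $Y_n$ is disjoint from $X$ (hence contained in $Y_n\setminus X$); distinct missing curves have distinct strict transforms. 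This gives a bijection between the non-exceptional boundary components and $\Miss(f)$, so the total number $b$ of components of $Y_n\setminus X$ equals $(n-c(f))+q(f)$, whence $n-b=c(f)-q(f)$.

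Finally I would feed the count into the standard localization exact sequence of the nonsingular surface $Y_n$ and its open subset $X$,
\[
1\to\Gamma(Y_n,\Oeul)^*/\bk^*\to\Gamma(X,\Oeul)^*/\bk^*\to\Integ^{\,b}\to\Cl(Y_n)\to\Cl(X)\to0,
\]
in which $\Integ^{\,b}$ is freely generated by the components of $Y_n\setminus X$. Taking the alternating sum of ranks and using the two standard properties of a blow-up of nonsingular surfaces, $\rho(Y_n)=\rho(Y)+n$ and $\Gamma(Y_n,\Oeul)^*=\Gamma(Y,\Oeul)^*$ (so $u(Y_n)=u(Y)$), gives $\big(\rho(X)-\rho(Y)\big)-\big(u(X)-u(Y)\big)=n-b$, which is the displayed formula once $n-b=c(f)-q(f)$ is substituted. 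Specializing to $Y=X$ yields $c(f)=q(f)$.

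I expect the main obstacle to be the bijection in the second paragraph: one must verify carefully that a curve of $Y$ is missing exactly when its strict transform lies in $Y_n\setminus X$, and that no boundary component is either omitted or double-counted (exceptional components map to points, the others map birationally onto missing curves). The rank bookkeeping is routine, the only delicate point being that $\rho(Y)$ may a priori be infinite; however the kernel of $\Cl(Y_n)\to\Cl(X)$ has finite rank, so each difference $\rho(X)-\rho(Y)$ and $u(X)-u(Y)$ is a well-defined integer even then—and in the case $f:X\to X$ that we actually need, these differences are zero for the trivial reason that source and target coincide, so this subtlety does not even arise.
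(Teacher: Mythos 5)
Your reduction of the lemma to the count $c(f)-q(f)=n-b$, where $b$ is the number of one-dimensional irreducible components of $Y_n\setminus X$, is correct and carefully argued: the dichotomy between exceptional and non-exceptional boundary components, and the equivalence (via \ref{9238ch928nhf8n34r}\eqref{023c093mrfh9129jf-ii}) that a curve of $Y$ is missing exactly when its strict transform misses $X$, are exactly right. The paper itself offers no argument here --- its proof is a citation of Remark 2.13 of \cite{Dai:bir} --- and a localization-sequence computation of the kind you propose is indeed the natural route to such a statement.

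The gap is in the final rank bookkeeping, and it is genuine for the lemma as stated. The alternating sum of ranks in your five-term sequence is only meaningful when all the groups have finite rank; the lemma, however, allows an arbitrary nonsingular surface $X$, and $\Cl(X)$ can have infinite rank (for instance $X=E\times\aff^1$ over $\bk=\Comp$, $E$ an elliptic curve, or any surface with nontrivial $\Pic^0$; only over an algebraic closure of a finite field is this phenomenon excluded). Your patch for this case is circular. Once the ranks are infinite, the only available meaning of ``$\rho(X)-\rho(Y_n)$'' is $-\rank K$ with $K=\ker\big(\Cl(Y_n)\to\Cl(X)\big)$, and then the claim that this difference vanishes ``because source and target coincide'' is precisely the assertion $\rank K=n$, i.e.\ $b=n$, i.e.\ $c(f)=q(f)$ --- the very thing being proved. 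Nothing formal forces it: the blow-up decomposition turns the restriction map into a surjection $\Cl(X)\oplus\Integ^n\to\Cl(X)$ with finitely generated kernel, and for an abstract group $A$ of infinite rank such a surjection can have kernel of any finite rank whatsoever (for $A=\Integ^{(\Nat)}$ one has $A\oplus\Integ^n\cong A$, so the kernel can even be zero). The coincidence $X=Y$ can only help through the geometry of the two maps $j^*$ and $\pi^*$, which are different maps even when the two surfaces are the same; closing the argument in general requires replacing $\Cl\otimes\Rat$ by a finite-rank invariant (N\'eron--Severi classes, $\ell$-adic $H^2$, or whatever underlies \cite[2.9 and 2.13]{Dai:bir}), and that step is not routine. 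Note also that even in the finite-rank case you are tacitly invoking Rosenlicht's theorem that $\Gamma(Z,\Oeul_Z)^*/\bk^*$ has finite rank. As written, then, your argument proves the lemma whenever $\Cl(X)\otimes\Rat$ is finite-dimensional --- which covers $\aff^2$ and every use made in this paper --- but not in the generality in which the lemma is stated.
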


\begin{proof}
Follows from Remark 2.13 of \cite{Dai:bir}.
\end{proof}

\begin{lemma}  \label {788hkckdssljkhjh25}
Let $f : X \to Y$ be a birational morphism of nonsingular surfaces
and $\Gamma_f$  the union of the missing curves of $f$.
If $X$ is affine then the following hold:
\begin{enumerata}

\item $\cent(f) \subseteq \Gamma_f$;
\item $Y \setminus \Gamma_f$ is the interior of $f(X)$ and $f^{-1}\big( Y \setminus \Gamma_f \big) = X \setminus \exc(f)$;
\item the surfaces $X \setminus \exc(f)$ and  $Y \setminus \Gamma_f$ are affine,
and $f$ restricts to an isomorphism $X \setminus \exc(f) \to Y \setminus \Gamma_f$.

\end{enumerata}
\end{lemma}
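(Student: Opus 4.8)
The plan is to work with the factorization of $f$ recorded in \ref{diofj;askdjf}. Write $f=\pi\circ\iota$, where $\iota:X\hookrightarrow Y_n$ is the open immersion and $\pi=\pi_1\circ\cdots\circ\pi_n:Y_n\to Y$ is the composite of the blow-ups; taking the diagram minimal, $\pi$ is an isomorphism over $Y\setminus\cent(f)$, its exceptional locus is $\Eeul=E_1\cup\cdots\cup E_n=\pi^{-1}(\cent(f))$, and for a curve $C\subset Y$ one has $C\in\Miss(f)$ iff the strict transform $\tilde C\subset Y_n$ satisfies $\tilde C\subseteq B:=Y_n\setminus X$ (by \ref{9238ch928nhf8n34r}). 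The first tool is the standard fact that an affine open subset of a nonsingular surface has complement of pure codimension one, so $B$ has no isolated points (algebraic Hartogs plus separatedness). Hence every irreducible component of $B$ is either some $E_i$ (contracted by $\pi$ to a point of $\cent(f)$) or a strict transform $\tilde C$ of a missing curve $C$ (mapped onto $C\subseteq\Gamma_f$). From this I read off the image: for $y\notin\cent(f)$ the fibre $\pi^{-1}(y)$ is a single point off $\Eeul$, lying in $B$ iff $y\in\Gamma_f$, so $y\in f(X)\iff y\notin\Gamma_f$; together with $\cent(f)\subseteq f(X)$ and the fact that the generic point of each missing curve is \emph{not} in $f(X)$, this yields $\operatorname{int}f(X)=Y\setminus\Gamma_f$, the first half of (b).

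The heart of the matter is (a), $\cent(f)\subseteq\Gamma_f$, and this is where I expect the main difficulty. I argue by contradiction: suppose $P\in\cent(f)\setminus\Gamma_f$, and choose an affine open $V$ with $P\in V\subseteq Y\setminus\Gamma_f$ and $V\cap\cent(f)=\{P\}$. Using the classification of components of $B$ and its purity, every point of $\pi^{-1}(V\setminus\{P\})$ lies in $X$, so $\pi$ restricts to an isomorphism $U:=\pi^{-1}(V\setminus\{P\})\xrightarrow{\ \sim\ }V\setminus\{P\}$, with $U$ a dense open subset of $X_V:=X\cap\pi^{-1}(V)$. Since $P\in\cent(f)$ there is a contracting curve $L=E_{i_0}\cap X$ with $f(L)=\{P\}$ and $L\subseteq X_V$. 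Now take any $g\in\Gamma(X,\Oeul_X)$. Transporting $g|_U$ through $f|_U$ gives a regular function on $V\setminus\{P\}$ which, $V$ being a normal affine surface and $P$ a point, extends to $\bar h\in\Gamma(V,\Oeul_V)$. Then $g$ and $f^*\bar h$ agree on the dense subset $U$ of the irreducible $X_V$, hence on all of $X_V$; restricting to $L$ and using $f(L)=\{P\}$ shows $g|_L=\bar h(P)$ is constant. Thus every global function on $X$ is constant on the curve $L$, contradicting the fact that global functions separate points on the affine variety $X$. Hence $P\in\Gamma_f$.

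With (a) established the rest is bookkeeping. For the remaining part of (b): a point off $\Eeul$ mapping into $\Gamma_f$ would lie on some $\tilde C\subseteq B$, hence outside $X$, which gives $X\setminus\exc(f)\subseteq f^{-1}(Y\setminus\Gamma_f)$; conversely a point of $\exc(f)$ maps into $\cent(f)\subseteq\Gamma_f$ by (a), so it cannot lie in $f^{-1}(Y\setminus\Gamma_f)$. Therefore $f^{-1}(Y\setminus\Gamma_f)=X\setminus\exc(f)$. Because $\pi$ is an isomorphism over $Y\setminus\cent(f)\supseteq Y\setminus\Gamma_f$ (using (a) again) and $X\setminus\exc(f)=X\cap(Y_n\setminus\Eeul)$, the morphism $f$ restricts to an isomorphism $X\setminus\exc(f)\xrightarrow{\ \sim\ }Y\setminus\Gamma_f$, which is the first assertion of (c).

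Finally, for affineness it suffices to treat $X\setminus\exc(f)$, since it is isomorphic to $Y\setminus\Gamma_f$. Choosing a nonsingular projective completion $\bar X\supseteq X$, affineness of $X$ means that $\bar X\setminus X$ supports an ample divisor; as $X\setminus\exc(f)=\bar X\setminus\big((\bar X\setminus X)\cup\overline{\exc(f)}\big)$ and the removed set contains the support of an ample divisor, Goodman's affineness criterion shows $X\setminus\exc(f)$ is affine. The only genuinely delicate step is (a); the purity of $B$ and Goodman's criterion are standard inputs, and all the other statements follow directly from the factorization of \ref{diofj;askdjf} together with Remarks \ref{9238ch928nhf8n34r}.
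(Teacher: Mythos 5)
Your arguments for (a), for both halves of (b), and for the isomorphism assertion in (c) are correct and complete; this is worth noting because the paper itself gives no internal argument here (it simply cites \cite[Prop.~2.1]{Dai:bir} and its proof), so up to that point your write-up is a legitimate self-contained substitute. The gap is in the very last step, the affineness claim in (c), which is the one substantive assertion your other arguments do not already cover. The inference you actually use --- ``the removed set \emph{contains} the support of an ample divisor, hence Goodman's criterion shows its complement is affine'' --- is false as stated. Concretely: in $\proj^2$, the closed set $L \cup \{p\}$ (a line $L$ plus a point $p \notin L$) contains the support of an ample divisor, yet its complement $\aff^2 \setminus \{p\}$ is not affine. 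What the easy direction of Goodman's criterion gives is that the complement of the support of an \emph{effective ample} divisor is affine; mere containment of an ample support is not enough. In addition, your opening assertion that for a chosen nonsingular projective completion $\bar X \supseteq X$ ``affineness of $X$ means that $\bar X \setminus X$ supports an ample divisor'' is itself the hard direction of Goodman's theorem, whose general form only produces such a completion after a further blow-up of the boundary; so either you must say that the completion is \emph{chosen} (via that theorem) so that its boundary supports an effective ample divisor, or you must prove the surface case, which is not a triviality (it uses Zariski decomposition and Nakai--Moishezon type arguments).

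The step can be repaired along the lines you intend, but two observations are genuinely needed and are missing. First, the removed set $(\bar X \setminus X) \cup \overline{\exc(f)}$ has pure codimension one: for $\bar X \setminus X$ this is the purity statement you already invoked (valid once the completion is chosen as above, since it is then the support of a divisor), and the components of $\overline{\exc(f)}$ are curves. Second, if $D$ is an effective ample divisor with support $\bar X \setminus X$ and $E_1,\dots,E_k$ are the closures of the contracting curves, then $nD + E_1 + \cdots + E_k$ is ample for $n \gg 0$ (openness of the ample cone), and it is an effective divisor whose support is \emph{exactly} the removed set; only then does the criterion apply and give affineness of $X \setminus \exc(f)$, hence of $Y \setminus \Gamma_f$. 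Without these two points, the final step rests on an invalid implication, so as written the proof of (c) is incomplete.
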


\begin{proof}
Follows from \cite[Prop.\ 2.1]{Dai:bir} and its proof.
\end{proof}

\begin{proposition}  \label {g64f4j3k21ld}
Let $f : \aff^2 \to \aff^2$ be a birational morphism.
If $P$ is a singular point of a missing curve of $f$,
then $P$ belongs to at least two missing curves of $f$.
\end{proposition}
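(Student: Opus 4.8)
The plan is to pass to the minimal resolution of $f$, to observe that the strict transform of every missing curve lands in the boundary $Y_n\setminus\aff^2$, and then to show that a singularity of the missing curve at $P$ can only be produced by (the resolution of) a meeting of \emph{two} non-contracted boundary components lying over $P$, each of which descends under $\pi$ to a missing curve through $P$.

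First I would apply \ref{dijfaksdjfall00000}\eqref{000006}: since $P$ is a singular point of a missing curve, $P$ is a fundamental point, so $P\in\cent(f)$. Fix a resolution as in \ref{diofj;askdjf},
$$\aff^2=X\hookrightarrow Y_n \xrightarrow{\ \pi\ } Y=\aff^2,$$
chosen (after reordering the blow-ups of points of $Y_0$) so that $\pi_1$ is the blow-up at $P$, with exceptional curve $E_1\subset Y_1$. For any missing curve $C'$ of $f$, remark \ref{9238ch928nhf8n34r}\eqref{023c093mrfh9129jf-ii} shows that no curve of $X$ dominates $C'$; as the strict transform $C'_n\subset Y_n$ does dominate $C'$, this forces $C'_n\cap X=\emptyset$, i.e.\ $C'_n$ is a component of the boundary $\Delta:=Y_n\setminus X$. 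Hence the missing curves through $P$ correspond bijectively to the non-contracted components of $\Delta$ meeting $\pi^{-1}(P)$, and the assertion to be proved becomes: at least two such components pass over $P$.

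Next I would bring in the singularity. Writing $m=\operatorname{mult}_P(C)\ge 2$ for the given missing curve $C$, the strict transform $C^{(1)}$ meets $E_1$ in $Y_1$ with total intersection $m$. The guiding case is that of two distinct tangent directions at $P$: then $C^{(1)}$ meets $E_1$ in at least two points, and tracing this through to $Y_n$ one expects a circuit unless a second non-contracted boundary component intervenes over $P$. To make this precise I would complete $Y_n$ to a smooth projective surface with SNC boundary; the boundary of any such completion of $\aff^2$ is a tree of rational curves, so $\Delta$ is a subconfiguration of a tree, hence a forest. This acyclicity is the input meant to upgrade ``an extra intersection over $P$'' into ``an extra component over $P$'', and combined with \eqref{23748rhf63t} (the contracting curves form a forest and the accompanying exceptional curves in $\Delta$ have $(E_i^2)_{Y_n}\le-2$ by \eqref{d023876g77f}) it should pin down the shape of $\Delta$ over $P$.

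The hard part will be the case of a single, necessarily singular, branch at $P$, where no circuit appears directly, so the tree argument alone gives nothing. Here I would either analyse the chain of infinitely near points over $P$ by hand—using \eqref{d023876g77f} and the demand that $Y_n\setminus\Delta\cong\aff^2$ keep $\Delta$ a tree of the correct shape, which a one-branch singularity can only satisfy if a further non-contracted boundary component is attached over $P$—or pass to the two-sided resolution $Z\to\proj^2$ of $\bar f$, where the strict transform of each missing curve becomes exceptional for $p$ over a point of the line at infinity of the source, and count how many such curves meet the tree $q^{-1}(P)$. In either formulation the crux, and the step I expect to absorb most of the work, is proving that the singularity of $C$ at $P$ forces this count to be at least two in the one-branch case.
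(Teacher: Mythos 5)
Your opening reduction is sound and agrees with the paper's setup: by \ref{dijfaksdjfall00000}\eqref{000006} the point $P$ is fundamental, the strict transform of any missing curve is indeed disjoint from $X$ inside $Y_n$ (its intersection with $X$ is open in an irreducible curve, hence empty or dense, and density is excluded by \ref{9238ch928nhf8n34r}\eqref{023c093mrfh9129jf-ii}), so the claim becomes a count of non-$\pi$-exceptional boundary components over $P$. From that point on, however, the proposal does not contain a proof. Even in your ``guiding case'' of two branches at $P$, the circuit argument has a hole: the chain of exceptional curves over $P$ joining the two points where the strict transform of $C$ meets $\pi^{-1}(P)$ need not lie in the boundary $\Delta$, because exceptional curves meeting $X$ (i.e.\ contracting curves of $f$) are not boundary components, and a circuit passing through such a curve contradicts nothing. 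Acyclicity of the boundary cannot by itself convert ``two branches over $P$'' into ``two missing curves through $P$''; indeed, configurations made of missing curves \emph{and} contracting curves genuinely do contain circuits (this is exactly what happens at each of the three intersection points in Russell's example, Remark \ref{ClearUpConfusion}), and neither \eqref{23748rhf63t} nor \eqref{d023876g77f} rules out that the connecting chain over $P$ consists of contracting curves. More importantly, you explicitly leave open the unibranch case (e.g.\ a cuspidal missing curve), correctly observing that no dual-graph argument reaches it; but that case is the crux of the proposition, and the two alternatives you sketch for it are programs, not arguments.

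This is precisely where the paper's proof brings in an input of a different nature, which your outline lacks and cannot do without. The minimal decomposition $\Deul$ of $f$ (diagram \eqref{mindec} with $n=n(f)$, blow-ups over $P$ performed first, missing curves ordered so that $C=C_1$ is the only one through $P$) determines matrices $\mu_\Deul$, $\Eeul_\Deul$, $\epsilon_\Deul$, $\epsilon'_\Deul$ as in \cite[2.7--2.8]{Dai:bir}, and result \cite[4.3(b)]{Dai:bir} asserts that $\det(\epsilon'_\Deul\mu_\Deul)=\pm1$. The chosen ordering makes all these matrices block-triangular, and unimodularity then forces the entry $\mu(P_1,C_1)$ to be $1$, i.e.\ $C$ is smooth at $P$ --- uniformly, with no case distinction on the number of branches. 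This determinant identity is an arithmetic constraint (reflecting factoriality and triviality of the Picard group of $\aff^2$, via the earlier results of \cite{Dai:bir}), strictly stronger than the combinatorial tree structure at infinity; without importing it or an equivalent statement, the approach you propose cannot be completed.
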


\begin{proof}
By \ref{dijfaksdjfall00000}\eqref{000006}, $P$ is a fundamental point of $f$;
so it suffices to show that if a fundamental point $P$ belongs to only one
missing curve $C$, then the multiplicity $\mu(P,C)$ of $C$ at $P$ is equal to $1$.
So assume that $P$ is a fundamental point which belongs to only one missing curve $C$.
Choose a diagram \eqref{mindec} satisfying $n=n(f)$, and let the notation
$P_i, E_i$, etc, be as in \ref{diofj;askdjf}.
In fact let us choose diagram \eqref{mindec} in such a way that $P_1=P$ and,
for some $s \in \{ 1, \dots, n \}$,
\begin{equation}  \label{87gf6h7923k4jhkjsdf9a87}
\text{$P_2, \dots, P_s$ are infinitely near $P_1$, but $P_{s+1}, \dots, P_n$ are not.}
\end{equation}
Let us label the missing curves as $C_1, \dots, C_q$, where 
\begin{equation}  \label{43j5;2k32k5jy2iq4}
P_1 \in C_j \Leftrightarrow j=1 .
\end{equation}
The diagram~\eqref{mindec} together with the ordering 
$(C_1, \dots, C_q)$ of the set of missing curves constitutes a ``minimal decomposition'' of $f$,
in the terminology of \cite[1.2(h)]{Dai:bir}.
This minimal decomposition $\Deul$ determines matrices $\mu_\Deul$, $\Eeul_\Deul$,
$\epsilon_\Deul$ and $\epsilon'_\Deul$, defined in \cite[2.8]{Dai:bir}.
These are matrices with entries in $\Nat$, and result \cite[4.3(b)]{Dai:bir} asserts that
the product $\epsilon'_\Deul \mu_\Deul$ is a square matrix of determinant $\pm1$.
We shall now argue that the condition $\det( \epsilon'_\Deul \mu_\Deul ) = \pm1$
implies that $\mu(P_1,C_1)=1$ (this will complete the proof of the proposition).
By \eqref{87gf6h7923k4jhkjsdf9a87}, the $n\times n$ matrix $\Eeul_\Deul$ has the following shape:
$$
\Eeul_\Deul = (e_{ij}) =  \left( \begin{array}{c|c} \Eeul_0 & 0 \\ \hline 0 & \Eeul_1 \end{array} \right)
\qquad \text{(with $e_{ij} \in \Nat$ for all $i,j$)}
$$
where $\Eeul_0$ is an $s \times s$ lower-triangular matrix with zero diagonal, and where
\begin{equation*}  \label{f8g73k4jsfd76}
\text{the first row is the only zero row of $\Eeul_0$.}
\end{equation*}
Consider the $n\times n$ matrix $\epsilon_\Deul$,
determined by $\Eeul_\Deul$ as explained in \cite[2.7]{Dai:bir}.
The already mentioned properties of $\Eeul_\Deul$ imply that $\epsilon_\Deul$ is as follows:
$$
\epsilon_\Deul = (\epsilon_{ij})
= \left( \begin{array}{c|c} \epsilon_0 & 0 \\ \hline 0 & \epsilon_1 \end{array} \right)
\qquad \text{(with $\epsilon_{ij} \in \Nat$ for all $i,j$)}
$$
where $\epsilon_0$ is an $s \times s$ lower-triangular matrix with diagonal entries equal to $1$,
and where
\begin{equation*}  \label{23h24897dk32}
\text{all entries in the first column of $\epsilon_0$ are positive.}
\end{equation*}

Next, $\epsilon'_\Deul$ is the submatrix of $\epsilon_\Deul$ obtained by deleting the $i$-th row
for each $i \in J$, where
$J=\setspec{ i }{ 1 \le i \le n,\ \, E_i \cap X = \emptyset \text{ in $Y_n$} }$ in the notation of \ref{diofj;askdjf}
($J$ is defined in \cite[1.2(h)]{Dai:bir}).
Let $j_0 = |J \cap \{ 1, \dots, s\}|$;
then the $(n-|J|)\times n$ matrix $\epsilon'_\Deul$ has the form
$$
\epsilon'_\Deul = \left( \begin{array}{c|c} \epsilon'_0 & 0 \\ \hline 0 & \epsilon'_1 \end{array} \right),
$$
where $\epsilon'_0$ is an $(s-j_0) \times s$ matrix with entries in $\Nat$ and
\begin{equation}  \label{dfkjq234jh787dfh}
\text{all entries in the first column of $\epsilon'_0$ are positive.}
\end{equation}
Finally, consider the $n\times q$ matrix $\mu_\Deul$;
by \eqref{43j5;2k32k5jy2iq4},
$$
\mu_\Deul = \left( \begin{array}{c|c} F & 0 \\ \hline G & H \end{array} \right)
\qquad \text{where}\ \ 
F = \mbox{\scriptsize $\left( \begin{array}{c} \mu(P_1,C_1) \\ \vdots \\ \mu(P_s,C_1) \end{array} \right)$}
\text{\ is $s\times 1$.}
$$
We have
\begin{equation}  \label{sd9f0g8;32lk5jsd908kj324}
\epsilon'_\Deul \mu_\Deul
= \left( \begin{array}{c|c} \epsilon'_0 & 0 \\ \hline 0 & \epsilon'_1 \end{array} \right)
\left( \begin{array}{c|c} F & 0 \\ \hline G & H \end{array} \right)
= \left( \begin{array}{c|c} \epsilon'_0 F & 0 \\ \hline \epsilon'_1 G  & \epsilon'_1 H \end{array} \right) ,
\end{equation}
where the block $\epsilon'_0 F$ has size $(s-j_0) \times 1$.
By \eqref{87gf6h7923k4jhkjsdf9a87}, $(E_s^2)_{Y_n} = -1$;
so $E_s \nsubseteq Y_n \setminus X$ by \eqref{d023876g77f} and hence $s \notin J$ by definition of $J$.
It follows that $s-j_0\ge1$.
In view of \eqref{sd9f0g8;32lk5jsd908kj324}, the fact that $\det( \epsilon'_\Deul \mu_\Deul ) = \pm1$ 
implies that $s-j_0=1$ and that the unique entry of $\epsilon'_0 F$ is $\pm1$.
We have $\{ 1, \dots, s \} \setminus J = \{s\}$, so
$\epsilon'_0 = ( \epsilon_{s1}\, \dots\, \epsilon_{ss} )$
and $\sum_{j=1}^s \epsilon_{sj} \mu(P_j,C_1) = \pm1$.
Since $\epsilon_{sj} \in \Nat$ for all $j$  and (by \eqref{dfkjq234jh787dfh})  $\epsilon_{s1}\ge1$,
we get $1 \le \mu(P_1,C_1) \le \sum_{j=1}^s \epsilon_{sj} \mu(P_j,C_1) = \pm1$,
so $\mu(P_1,C_1)=1$.  This completes the proof.
\end{proof}

\begin{remark}
Let $\phi : X \to Y$ be a dominant morphism of nonsingular surfaces.
By a {\it deficient curve\/} of $\phi$, we mean a curve $C \subset Y$ satisfying:
$$
\text{for almost all points $P \in C$, $| f^{-1}(P) | < [ \bk(X):\bk(Y) ]_s$}
$$
where ``almost all'' means ``all except possibly finitely many,'' ``$|\ |$'' denotes cardinality,
$\bk(X)$ and $\bk(Y)$ are the function fields of $X$ and $Y$ and 
$[ \bk(X):\bk(Y) ]_s$ is the separable degree of the field extension $\bk(X)/\bk(Y)$.
Note that $\phi$ has finitely many deficient curves, and that 
if $\phi$ is birational then the deficient curves are precisely the missing curves.

Then it is interesting to note that Proposition~\ref{g64f4j3k21ld} does not generalize to all dominant morphisms
$\aff^2 \to \aff^2$,
i.e., {\it it is not the case that each singular point of a deficient curve is a common point of at least two
deficient curves.}  This is shown by the following example, in which we assume that $\Char\bk=0$.

Choose a coordinate system of $\aff^2$ and define morphisms
$\aff^2 \xrightarrow{ f }\aff^2 \xrightarrow{ g }\aff^2 \xrightarrow{ h }\aff^2$ by:
$$
f(x,y) = (x,xy), \qquad
g(x,y) = (x+y(y^2-1), y), \qquad
h(x,y) = (x,y^2).
$$
Note that $f$ is a SAC and $g \in \Aut(\aff^2)$. Define $\phi = h \circ g \circ f : \aff^2 \to \aff^2$.
Then $\phi$ has two deficient curves $C_1$ and $C_2$, where:
\begin{itemize}
\setlength{\itemsep}{.5mm}

\item $C_1$ is ``\,$y=0$\,''  (the deficient curve of $h$);

\item $C_2$ is ``\,$x^2-y(y-1)^2=0$\,''  (the image by $h \circ g$ of the missing curve of $f$).

\end{itemize}
Moreover, $(0,1)$ is a singular point of $C_2$ which is not on $C_1$.
\end{remark}

\begin{lemma}  \label {89x21bxnrmv7tr36}
Let $\aff^2 \xrightarrow{f} \aff^2 \xrightarrow{g} \aff^2$ be birational morphisms.
Then 
$$
\cent(g \circ f) = \cent(g) \cup g\big( \cent(f) \big).
$$
In particular, every fundamental point of $g$ is a fundamental point of $g \circ f$.
\end{lemma}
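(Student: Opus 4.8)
The plan is to establish the two inclusions $\cent(g \circ f) \subseteq \cent(g) \cup g\big(\cent(f)\big)$ and $\cent(g) \cup g\big(\cent(f)\big) \subseteq \cent(g \circ f)$ separately. In each direction the basic dichotomy for a curve $C$ in the domain of $f$ is whether $f$ already contracts $C$ to a point, or whether $\overline{f(C)}$ is a curve that $g$ subsequently contracts; so throughout I would pass between a curve and the closure of its image, using that $g$ is continuous and hence $g\big(\overline{S}\big) \subseteq \overline{g(S)}$ for any set $S$.

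For the inclusion ``$\subseteq$'', I would start from a fundamental point $P$ of $g \circ f$, written as $P = (g \circ f)(C)$ for some contracting curve $C \subset \aff^2$ of $g \circ f$, so that $g\big(f(C)\big) = \{P\}$. If $f(C)$ is a point, then $C$ is a contracting curve of $f$ and $P = g\big(f(C)\big) \in g\big(\cent(f)\big)$. Otherwise $\overline{f(C)} = C'$ is a curve; then $g(C') \subseteq \overline{g(f(C))} = \{P\}$, so $g(C') = \{P\}$ is a point, $C'$ is a contracting curve of $g$, and $P = g(C') \in \cent(g)$.

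For the inclusion ``$\supseteq$'', the containment $g\big(\cent(f)\big) \subseteq \cent(g \circ f)$ is immediate: if $p = f(C)$ for a contracting curve $C$ of $f$, then $(g \circ f)(C) = g(p)$ is a point, so $C$ is a contracting curve of $g \circ f$ and $g(p) \in \cent(g \circ f)$. The containment $\cent(g) \subseteq \cent(g \circ f)$ (equivalently, the ``in particular'' clause) is the substantive part. Given $P = g(C')$ with $C'$ a contracting curve of $g$, I would split according to whether $C' \in \Miss(f)$. If $C' \notin \Miss(f)$, then by \ref{9238ch928nhf8n34r}\eqref{023c093mrfh9129jf-ii} there is a curve $C \subset \aff^2$ with $f(C)$ dense in $C'$; since then $f(C) \subseteq C'$ and $g(C') = \{P\}$, we get $(g \circ f)(C) = \{P\}$, so $C$ is a contracting curve of $g \circ f$ and $P \in \cent(g \circ f)$.

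The main obstacle is the remaining case $C' \in \Miss(f)$: here no curve of the domain maps densely onto $C'$, so $P = g(C')$ cannot be realized as the image of a contracting curve via the naive argument above. I would resolve this by invoking \ref{dijfaksdjfall00000}\eqref{POdsifias2874wke}, which guarantees that every missing curve of $f$ contains a fundamental point $p$ of $f$. Since $p \in C'$ and $g(C') = \{P\}$, we have $g(p) = P$, whence $P = g(p) \in g\big(\cent(f)\big) \subseteq \cent(g \circ f)$. Thus $P \in \cent(g \circ f)$ in either case, and combining the two inclusions completes the proof.
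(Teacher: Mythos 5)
Your proposal is correct and follows essentially the same route as the paper: the substantive inclusion $\cent(g)\subseteq\cent(g\circ f)$ is handled by the same dichotomy on whether the contracted curve is a missing curve of $f$, with the missing-curve case resolved by the fact that every missing curve of $f\in\Bir(\aff^2)$ contains a fundamental point (your citation of \ref{dijfaksdjfall00000}\eqref{POdsifias2874wke} is exactly the packaged form, for $\aff^2$, of the paper's condition $q_0(f)=0$). The only difference is organizational: the paper proves this step for arbitrary nonsingular surfaces under the hypothesis $q_0(f)=0$ and then specializes to $\aff^2$, while you work on $\aff^2$ directly and also write out the easy inclusions that the paper leaves to the reader.
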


\begin{proof}
Let $X \xrightarrow{f} Y \xrightarrow{g} Z$ be birational morphisms of nonsingular surfaces.
The reader may easily verify that 
$\cent(g \circ f) \subseteq \cent(g) \cup g\big( \cent(f) \big)$ and 
$g\big( \cent(f) \big) \subseteq \cent(g \circ f)$.
In order to obtain the desired equality, there remains to show that
\begin{equation}  \label {892cncc8gvex9focmffrtgb}
\cent(g) \subseteq \cent(g \circ f).
\end{equation}
We claim that \eqref{892cncc8gvex9focmffrtgb} is true whenever $q_0(f)=0$.
Indeed, consider $P \in \cent(g)$. Then there exists a curve $C \subset Y$ such that $g(C)=\{P\}$.
If $C \notin \Miss(f)$ then (\ref{9238ch928nhf8n34r}\eqref{023c093mrfh9129jf-ii})
there exists a curve $C' \subset X$ such that $f(C')$ is a dense subset of $C$;
in particular, $(g \circ f)(C') = \{P\}$ and hence $P \in \cent(g \circ f)$.
If $C \in \Miss(f)$ then, since $q_0(f) = 0$,  \ref{9238ch928nhf8n34r}\eqref{pfq903012cm0mjf-i} implies that
some fundamental point $Q$ of $f$ lies on $C$;
then there exists a curve $C' \subset X$ such that $f(C') = \{Q\}$; then
$(g \circ f)(C') = \{P\}$ and hence $P \in \cent(g \circ f)$.

By \cite[2.11]{Dai:bir}, the condition $q_0(f)=0$ is satisfied whenever $\Gamma(X,\Oeul_X)^* = \bk^*$ and $\Cl(Y)=0$.
In particular, if $X=\aff^2=Y$ then $q_0(f)=0$, so \eqref{892cncc8gvex9focmffrtgb} holds
and consequently $\cent(g \circ f) = \cent(g) \cup g\big( \cent(f) \big)$.
\end{proof}

\begin{lemma}  \label {Doifqrqkkdsjf8}
Let $X \xrightarrow{f} Y \xrightarrow{g} Z$
be birational morphisms of nonsingular surfaces and let
$\Gamma_f$ (resp.\ $\Gamma_g$,  $\Gamma_{g\circ f}$) 
be the union of all missing curves of $f$ (resp.\ of $g$, $g\circ f$).
\begin{enumerata}

\item \label {qo8723yrd187ryqaj}
$\Gamma_{g\circ f}$ is equal to the union of all $1$-dimensional components of
$\Gamma_g \cup \overline{g ( \Gamma_f  )}$,
where $\overline{g ( \Gamma_f  )}$ denotes the closure of $g ( \Gamma_f  )$ in $Z$.

\item \label {qhv6ghqwka7} If $Y$ is affine then  $\Gamma_{g\circ f} = \Gamma_g \cup \overline{g ( \Gamma_f  )}$;
in particular, each missing curve of $f$ is included in $g^{-1}(\Gamma_{g\circ f})$.

\end{enumerata}
\end{lemma}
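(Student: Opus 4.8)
The plan is to analyze, curve by curve, which curves of $Z$ are missing for $g\circ f$, using as the single essential tool the correspondence of Remark~\ref{9238ch928nhf8n34r}\eqref{023c093mrfh9129jf-ii}: for any birational morphism $h:U\to V$ of nonsingular surfaces and any curve $D\subseteq V$, there exists a (then unique) curve $D'\subseteq U$ with $h(D')$ dense in $D$ if and only if $D\notin\Miss(h)$. I will apply this correspondence to each of $f$, $g$ and $g\circ f$. I will also use repeatedly the elementary fact that for a continuous map $h$ and any subset $A$ one has $\overline{h(\overline A)}=\overline{h(A)}$; in particular, if $f(D')$ is dense in a curve $C\subseteq Y$ then $\overline{(g\circ f)(D')}=\overline{g(C)}$.

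The first step is a characterization claim: for a curve $D\subseteq Z$,
$$
D\notin\Miss(g\circ f)\iff \text{there is a curve } C\subseteq Y \text{ with } C\notin\Miss(f)\ \text{and}\ \overline{g(C)}=D.
$$
For the forward direction I apply the correspondence to $g\circ f$ to obtain $D'\subseteq X$ with $(g\circ f)(D')$ dense in $D$; since that image is dense in a curve, $f(D')$ cannot be a point, so $C:=\overline{f(D')}$ is a curve with $C\notin\Miss(f)$ and $\overline{g(C)}=\overline{(g\circ f)(D')}=D$. Conversely, from such a $C$ the correspondence for $f$ gives $C'\subseteq X$ with $f(C')$ dense in $C$, whence $(g\circ f)(C')$ is dense in $\overline{g(C)}=D$ and $D\notin\Miss(g\circ f)$.

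The second step extracts the explicit description and yields \eqref{qo8723yrd187ryqaj}. Fix a curve $D\subseteq Z$ and split on whether $D\in\Miss(g)$. If $D\in\Miss(g)$, the correspondence for $g$ shows no curve $C\subseteq Y$ satisfies $\overline{g(C)}=D$, so by the claim $D\in\Miss(g\circ f)$; this already proves $\Gamma_g\subseteq\Gamma_{g\circ f}$. If $D\notin\Miss(g)$, there is a unique curve $C\subseteq Y$ (necessarily with $C\notin\Cont(g)$) such that $\overline{g(C)}=D$, and the claim gives $D\in\Miss(g\circ f)\iff C\in\Miss(f)$. Hence the missing curves of $g\circ f$ are exactly the missing curves of $g$ together with the curves $\overline{g(C)}$ for $C\in\Miss(f)\setminus\Cont(g)$. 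Since $\overline{g(\Gamma_f)}=\bigcup_{C\in\Miss(f)}\overline{g(C)}$ and $\overline{g(C)}$ is $1$-dimensional precisely when $C\notin\Cont(g)$, the $1$-dimensional components of $\Gamma_g\cup\overline{g(\Gamma_f)}$ are exactly these curves, which is \eqref{qo8723yrd187ryqaj}.

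The third step is the affine case. By part~\eqref{qo8723yrd187ryqaj} we already have $\Gamma_{g\circ f}\subseteq\Gamma_g\cup\overline{g(\Gamma_f)}$ and $\Gamma_g\subseteq\Gamma_{g\circ f}$, so it remains only to show $\overline{g(C)}\subseteq\Gamma_{g\circ f}$ for each $C\in\Miss(f)$. When $C\notin\Cont(g)$ this is part of the description just obtained. The one remaining case, where $C\in\Cont(g)$ so that $\overline{g(C)}$ is the single point $p=g(C)\in\cent(g)$, is the point I expect to be the crux, and it is the only place the hypothesis on $Y$ is used: here I invoke Lemma~\ref{788hkckdssljkhjh25}(a) applied to $g:Y\to Z$ (valid because $Y$ is affine) to get $p\in\cent(g)\subseteq\Gamma_g\subseteq\Gamma_{g\circ f}$. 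Thus $\overline{g(\Gamma_f)}\subseteq\Gamma_{g\circ f}$, giving the equality $\Gamma_{g\circ f}=\Gamma_g\cup\overline{g(\Gamma_f)}$; the final ``in particular'' follows at once, since $\overline{g(C)}\subseteq\Gamma_{g\circ f}$ means $C\subseteq g^{-1}(\Gamma_{g\circ f})$.
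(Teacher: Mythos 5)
Your proof is correct and follows essentially the same route as the paper's: part (a) rests on the unique-lift correspondence of Remark~\ref{9238ch928nhf8n34r}\eqref{023c093mrfh9129jf-ii} applied curve by curve, and part (b) reduces to the point components $g(C)$ for $C \in \Miss(f)\cap\Cont(g)$, which are handled via $\cent(g)\subseteq\Gamma_g$ from Lemma~\ref{788hkckdssljkhjh25} using that $Y$ is affine, exactly as in the paper. The only difference is organizational: you first isolate the clean characterization $\Miss(g\circ f)=\Miss(g)\cup\setspec{\overline{g(C)}}{C\in\Miss(f)\setminus\Cont(g)}$ and then read off both parts, whereas the paper argues directly on inclusions of curves in the two unions.
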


\begin{proof}
To prove (a),
it's enough to show that a curve in $Z$ is
not included in $\Gamma_{g\circ f}$ if and only if it is not included in
$\Gamma_g \cup \overline{g\big( \Gamma_f \big)}$.
Let $C \subset Z$ be a curve such that $C \nsubseteq \Gamma_{g\circ f}$.
Then there exists a curve $C_0 \subset X$
such that $g(f(C_0))$ is a dense subset of $C$; consequently, the set $C_1 = \overline{f(C_0)}$
is a curve in $Y$ and $g(C_1)$ is dense in $C$, so $C$ is not a missing curve of $g$ and hence
$C \nsubseteq \Gamma_g$. 
If $C \subseteq \overline{g\big( \Gamma_f \big)}$
then there exists a missing curve $C_1'$ of $f$ such that $\overline{g(C_1')}=C$;
however, $C_1$ is the only curve in $Y$ whose image by $g$ is a dense subset of $C$,
and $C_1$ is not a missing curve of $f$; so 
$C \nsubseteq \overline{g\big( \Gamma_f \big)}$ and hence 
$C \nsubseteq \Gamma_g \cup \overline{g\big( \Gamma_f \big)}$.

Conversely, let $C \subset Z$ be a curve such that
$C \nsubseteq \Gamma_g \cup \overline{g\big( \Gamma_f \big)}$.
Then $C \nsubseteq \Gamma_g$, so there exists a curve $C_1 \subset Y$
such that $g(C_1)$ is a dense subset of $C$.
Note that $C_1$ is not a missing curve of $f$, because 
$C \nsubseteq \overline{g\big( \Gamma_f \big)}$;
so there exists a curve $C_0 \subset X$
such that $f(C_0)$ is a dense subset of $C_1$.
Then $(g\circ f)(C_0)$ is a dense subset of $C$ and consequently
$C \nsubseteq \Gamma_{g\circ f}$. This proves (a).

(b) Suppose that $Y$ is affine.  If a point $P \in Z$ is
an irreducible component of $\Gamma_g \cup \overline{g\big( \Gamma_f \big)}$ then
$\{ P \} = g( C )$ where $C$ is a missing curve of $f$,
so $P$ is a fundamental point of $g$; since $Y$ is affine, \ref{788hkckdssljkhjh25}
implies that $\cent(g) \subseteq \Gamma_g$, so $P \in \Gamma_g$,
which contradicts the hypothesis that 
$P$ is an irreducible component of $\Gamma_g \cup \overline{g\big( \Gamma_f \big)}$.
This shows that $\Gamma_g \cup \overline{g\big( \Gamma_f \big)}$ is a union of curves,
so $\Gamma_{g\circ f} = \Gamma_g \cup \overline{g ( \Gamma_f  )}$ follows from~(a).
\end{proof}

Results \ref{938hdb67-gdhifa} and \ref{ie123918209ejd10} are valid in all characteristics,
but are particularly interesting when $\Char\bk>0$
(recall that not all lines are coordinate lines when $\Char\bk > 0$).

\begin{proposition}  \label {938hdb67-gdhifa}
Let $f : \aff^2 \to \aff^2$ be a birational morphism.
\begin{enumerata}

\item  \label {000004}
If a missing curve of $f$ is nonsingular then it is a coordinate line.

\item  \label {0000044}
If a contracting curve of $f$ has one place at infinity, then it is a coordinate line.

\end{enumerata}
\end{proposition}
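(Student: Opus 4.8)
The plan is to reduce both assertions to the single statement ``a curve isomorphic to $\aff^1$ which is a missing (resp.\ contracting) curve of $f$ is a coordinate line,'' and then to rectify such a curve by analysing the minimal decomposition \eqref{mindec} of $f$.

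First the reduction. For (a), let $C$ be a nonsingular missing curve of $f$. By \ref{dijfaksdjfall00000}\eqref{000003}, $C$ is rational with one place at infinity; since $C$ is moreover nonsingular, it coincides with its smooth model, and a smooth rational curve with a single place at infinity is $\proj^1$ minus one point, i.e.\ $C\isom\aff^1$. Thus $C$ is a line in the sense of Section~\ref{Sec:AdmissConfig}, and it remains only to prove that it is a \emph{coordinate} line. For (b), the same conclusion $C\isom\aff^1$ holds: by \eqref{23748rhf63t} every contracting curve is nonsingular and rational, and the hypothesis supplies the one place at infinity. So both parts come down to rectifying a curve $C\isom\aff^1$ that is either a missing or a contracting curve of $f$.

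The rectification itself I would carry out by completing to a projective surface and contracting the boundary, in the spirit of \cite{Dai:trees}. Choose a diagram \eqref{mindec} with $n=n(f)$ and, as in the proof of \ref{g64f4j3k21ld}, order the blow-ups so that the fundamental points lying on $C$ (there is at least one, by \ref{dijfaksdjfall00000}\eqref{POdsifias2874wke} in case~(a)) are treated first. Complete $Y=\aff^2$ to $\proj^2$ and lift the tower, obtaining a smooth projective surface $\barr Y_n\supseteq Y_n\supseteq X=\aff^2$. One then studies the reduced boundary divisor $B=\barr Y_n\setminus X$ together with the closure $\barr C$: by \eqref{23748rhf63t} the contracting curves form an SNC forest, and the relation $\det(\epsilon'_\Deul\mu_\Deul)=\pm1$ recalled in the proof of \ref{g64f4j3k21ld} (from \cite[4.3(b)]{Dai:bir}) controls the multiplicities of $C$ at the successive centres. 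The minimality condition \eqref{d023876g77f} should then force the weighted dual graph of $B$, enriched by $\barr C$, to be a linear chain that contracts, by successive blow-downs of $(-1)$-curves disjoint from the interior $\aff^2$, to the boundary configuration of a coordinate line in $\proj^2$. The resulting birational map is an isomorphism on the interiors, hence an automorphism of $\aff^2$ carrying $C$ onto a degree-one curve; by \ref{weak-dpfq32748rjkdkd} this exhibits $C$ as a coordinate line.

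For (b) I would run the same argument on the source side: a contracting curve $C\subset X=\aff^2$ is, in the minimal decomposition, the trace on $X$ of an exceptional curve $E_m$ with $(E_m^2)_{Y_n}=-1$, and the SNC-forest property \eqref{23748rhf63t} of $\Cont(f)$ plays the role that the missing-curve analysis played in~(a); alternatively one may try to transport $C$ to a missing curve of an auxiliary birational morphism and invoke part~(a). I expect the main obstacle to be precisely this rectification step: since not every line is a coordinate line when $\Char\bk>0$, the conclusion cannot follow from $C\isom\aff^1$ alone, and the whole weight of the argument lies in showing that the constraints imposed by $C$ being a missing (resp.\ contracting) curve of a birational endomorphism---the forest structure together with the determinant-$\pm1$ relation---rule out the ``wild'' embeddings and pin the dual graph down to the standard, contractible shape.
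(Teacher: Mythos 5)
Your reduction of both parts to the single statement ``$C\isom\aff^1$ and $C$ is a missing (resp.\ contracting) curve $\Rightarrow$ $C$ is a coordinate line'' is correct and matches what the paper does implicitly (for (b), the paper likewise first notes via \eqref{23748rhf63t} that $C\isom\aff^1$). But the rectification step---the entire content of the proposition---is not proved in your proposal; it is only conjectured. Phrases like ``should then force the weighted dual graph \dots to be a linear chain'' and ``I expect the main obstacle to be precisely this rectification step'' mark exactly the point where an argument is required and none is given. The claim that minimality \eqref{d023876g77f}, the SNC-forest property \eqref{23748rhf63t}, and the relation $\det(\epsilon'_\Deul\mu_\Deul)=\pm1$ pin the boundary graph down to the standard contractible shape is precisely the theorem to be proven: in characteristic $p>0$ there exist lines $C\isom\aff^1$ whose configuration at infinity is \emph{not} contractible to that of a coordinate line, and nothing in your sketch excludes the possibility that such a wild line occurs as a missing or contracting curve. (Two smaller problems: a contracting curve need not be the trace of an $E_m$ with $(E_m^2)_{Y_n}=-1$, since minimality only constrains the $E_i$ contained in $Y_n\setminus X$; and your alternative of transporting a contracting curve to a missing curve of an auxiliary morphism is circular, because constructing such an auxiliary morphism---e.g.\ a SAC with missing curve $C$---already requires knowing that $C$ is a coordinate line.)

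The paper closes this gap with external inputs that your proposal lacks. Part (a) is quoted directly from \cite[4.6]{Dai:bir}. For part (b) the paper uses a genuinely different mechanism, Ganong's theory of the Kodaira dimension of line embeddings: embed $\dom(f)=\aff^2$ and $\codom(f)=\aff^2$ in copies of $\proj^2$, let $V$ be the prime divisor of the function field determined by the contracting curve $C$, and observe that the center of $V$ on the target $\proj^2$ is zero-dimensional because $f$ contracts $C$ to a point; then \cite[2.1]{Gan:Kod} gives $\kappa(V)<0$, and \cite[2.4]{Gan:Kod}---a theorem valid in all characteristics---concludes that $C$ is a coordinate line. It is this Kodaira-dimension criterion (or the equivalent analysis behind \cite[4.6]{Dai:bir}) that does the work of ruling out wild embeddings; without some such input, the dual-graph contraction you envision cannot be completed.
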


\begin{proof}
Assertion \eqref{000004} follows from result 4.6 of \cite{Dai:bir}.
To prove \eqref{0000044}, consider a contracting curve $C$ of $f$ such that $C$ has one place at infinity.
We noted in \eqref{23748rhf63t} that $C$ is a nonsingular rational curve, so $C \isom \aff^1$ is clear.

Let us embed $\dom(f) = \aff^2$ in $X \isom \proj^2$, 
let $L$ be the function field of $X$ and $V$ the prime divisor of $L/\bk$ whose center on $X$ is the
closure of $C$ in $X$ (i.e., $V$ is the DVR $\Oeul_{X,\xi}$ where $\xi \in X$ is the generic point of $C$).
Also embed $\codom(f) = \aff^2$ in $Y \isom \proj^2$, and note that the center of $V$ on $Y$ is zero-dimensional,
since $C \in \Cont(f)$.

Consider the Kodaira dimension $\kappa(V)$ as defined in the introduction of Section 2 of \cite{Gan:Kod}.
Then $\kappa(V)<0$ by \cite[2.1]{Gan:Kod} and the fact that the center of $V$ on $Y$ is zero-dimensional; 
so $C$ is a coordinate line by \cite[2.4]{Gan:Kod}.
\end{proof}

\begin{corollary}  \label {ie123918209ejd10}
Let $C, C'$ be curves in $\aff^2$ such that $C \isom \aff^1 \isom C'$, and 
suppose that there exists a birational morphism $f : \aff^2 \to \aff^2$ such that $f(C)$ is a dense subset of $C'$.
Then $f(C)=C'$. Moreover, if one of $C,C'$ is a coordinate line then both are coordinate lines.
\end{corollary}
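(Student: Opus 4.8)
The plan is to prove the two assertions separately: first the equality $f(C)=C'$ by a direct argument on morphisms of affine lines, and then the ``moreover'' clause by two dual reductions to Proposition~\ref{938hdb67-gdhifa}, exploiting the simple affine contraction $c$.

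\medskip
For the equality, I would argue as follows. Since $f(C)$ is dense in the closed curve $C'$, its closure $\overline{f(C)}$ equals $C'$, so in particular $f(C)\subseteq C'$ and $f$ restricts to a morphism $f|_C\colon C\to C'$ with dense image. Fixing isomorphisms $\aff^1\xrightarrow{\sim}C$ and $C'\xrightarrow{\sim}\aff^1$ (available because $C\isom\aff^1\isom C'$), the composite is a dominant morphism $\aff^1\to\aff^1$, i.e.\ a polynomial map $s\mapsto p(s)$ with $p\in\bk[s]$ nonconstant. As $\bk$ is algebraically closed, $p(s)-\lambda$ has a root for every $\lambda\in\bk$, so this map is surjective; hence $f|_C$ is surjective and $f(C)=C'$.

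\medskip
For the ``moreover'' clause, the key observation is that the simple affine contraction $c(x,y)=(x,xy)$ has as its unique contracting curve, and also as its unique missing curve, the coordinate line $\{x=0\}$; combined with a suitable automorphism this lets me realize any prescribed coordinate line either as something that gets contracted or as something that is missed. \emph{Direction 1} ($C'$ a coordinate line $\Rightarrow$ $C$ a coordinate line): choose $u\in\Aut(\aff^2)$ with $u(C')=\{x=0\}$ and set $g=c\circ u$, so that $g$ contracts $C'$ to a point. Then $g\circ f$ is a birational morphism with $(g\circ f)(C)=g(C')$ a single point, so $C$ is a contracting curve of $g\circ f$; since $C\isom\aff^1$ has one place at infinity, Proposition~\ref{938hdb67-gdhifa}\eqref{0000044} gives that $C$ is a coordinate line. \emph{Direction 2} ($C$ a coordinate line $\Rightarrow$ $C'$ a coordinate line): choose $u\in\Aut(\aff^2)$ with $u(\{x=0\})=C$ and set $s=u\circ c$, a birational morphism whose set of missing curves is exactly $\{C\}$, so $\Gamma_s=C$. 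Applying Lemma~\ref{Doifqrqkkdsjf8}\eqref{qhv6ghqwka7} to $\aff^2\xrightarrow{s}\aff^2\xrightarrow{f}\aff^2$ (the middle plane being affine) yields $\Gamma_{f\circ s}=\Gamma_f\cup\overline{f(C)}=\Gamma_f\cup C'$, so $C'$ is a missing curve of $f\circ s$; as $C'\isom\aff^1$ is nonsingular, Proposition~\ref{938hdb67-gdhifa}\eqref{000004} shows $C'$ is a coordinate line.

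\medskip
The genuinely inventive step—and thus the main obstacle—is the idea of pre- or post-composing $f$ with $c$ (adjusted by an automorphism) so as to convert the given datum $f(C)=C'$ into a situation where $C$ is a \emph{contracting} curve or $C'$ is a \emph{missing} curve, which is precisely what Proposition~\ref{938hdb67-gdhifa} needs. The remaining care is bookkeeping: confirming that the composite really is a birational morphism and that the relevant curve genuinely lands in $\Cont$ (immediate from $f(C)=C'$) respectively in $\Miss$. The latter is where I expect the one subtle point to lie, and it is handled cleanly by Lemma~\ref{Doifqrqkkdsjf8}\eqref{qhv6ghqwka7} together with the uniqueness statement in Remark~\ref{9238ch928nhf8n34r}\eqref{023c093mrfh9129jf-ii}, which guarantees that $C$ is the only curve dominating $C'$ and hence that no other component of $f^{-1}(C')$ spoils the conclusion $\Gamma_{f\circ s}=\Gamma_f\cup C'$.
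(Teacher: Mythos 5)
Your proposal is correct and follows essentially the same route as the paper: the same surjectivity argument for dominant morphisms $\aff^1\to\aff^1$, and the same trick of composing $f$ with a simple affine contraction (your automorphism $u$ plays the role of the paper's choice of coordinate system) so that Proposition~\ref{938hdb67-gdhifa}\eqref{0000044} resp.\ \eqref{000004} applies, with Lemma~\ref{Doifqrqkkdsjf8} handling the missing-curve direction exactly as in the paper.
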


\begin{proof}
It is a simple fact that every dominant morphism $\aff^1 \to \aff^1$ is finite, hence surjective; so $f(C)=C'$.

If $C$ is a coordinate line then there exists a birational morphism $g : \aff^2 \to \aff^2$ such that $C \in \Miss(g)$
(choose a coordinate system $(X,Y)$ such that $C=\bZ(X)$, and take $g(x,y) = (x,xy)$);
then $C' \in \Miss(f \circ g)$ by \ref{Doifqrqkkdsjf8},
so \ref{938hdb67-gdhifa}\eqref{000004} implies that $C'$ is a coordinate line.

If $C'$ is a coordinate line then there exists a birational morphism $g : \aff^2 \to \aff^2$ such that $C' \in \Cont(g)$
(choose $(X,Y)$ such that $C' = \bZ(X)$ and take $g(x,y)=(x,xy)$); then $C \in \Cont(g \circ f)$,
so \ref{938hdb67-gdhifa}\eqref{0000044} implies that $C$ is a coordinate line.
\end{proof}

\begin{lemma}  \label {dfiuq234913[2q0k}
Let $\aff^2 \xrightarrow{ f } \aff^2 \xrightarrow{ g } \aff^2$ be 
birational morphisms.
\begin{enumerata}

\item If $\Miss(f) \subseteq \Cont(g)$ then $\Miss(f)$ is admissible.

\item If $\Cont(g) \subseteq \Miss(f)$ then $\Cont(g)$ is admissible.

\end{enumerata}
\end{lemma}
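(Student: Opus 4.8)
The plan is to verify, in each part, the three defining conditions of an admissible configuration from Definition~\ref{ioufq923lksmdjjj}: that every curve in the configuration is a coordinate line, that their sum is an SNC-divisor of $\aff^2$, and that the associated graph is a forest. In both parts I would extract the SNC and forest conditions from fact~\eqref{23748rhf63t} applied to $g$, which tells us that $D_g = \sum_{C \in \Cont(g)} C$ is an SNC-divisor of $\aff^2$ with $\Geul(D_g,\aff^2)$ a forest; the coordinate-line condition would then come from the two halves of Proposition~\ref{938hdb67-gdhifa}, used in complementary ways.

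For part~(a), suppose $\Miss(f) \subseteq \Cont(g)$ and set $S = \Miss(f)$. First I would observe that, since $S$ is a sub-configuration of $\Cont(g)$ and each of the three clauses defining an SNC-divisor (nonsingularity of each curve, transversal one-point pairwise intersections, and absence of triple points) is inherited by sub-configurations, the divisor $\sum_{C \in S} C$ is again SNC; similarly $\Geul(\sum_{C \in S} C,\aff^2)$ is the induced subgraph of the forest $\Geul(D_g,\aff^2)$ on the vertices $S$, hence a forest. It then remains to show that each $C \in S$ is a coordinate line: because $C \in \Cont(g)$ it is nonsingular by~\eqref{23748rhf63t}, and a nonsingular missing curve is a coordinate line by Proposition~\ref{938hdb67-gdhifa}\eqref{000004}.

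For part~(b), suppose $\Cont(g) \subseteq \Miss(f)$. Now the SNC and forest conditions hold for $\Cont(g)$ directly from~\eqref{23748rhf63t}, with no passage to a sub-configuration needed. For the coordinate-line condition I would fix $C \in \Cont(g)$ and argue as follows: since $C \in \Miss(f)$, the curve $C$ is rational with one place at infinity by Lemma~\ref{dijfaksdjfall00000}\eqref{000003}; and since $C$ is then a contracting curve of $g$ having one place at infinity, Proposition~\ref{938hdb67-gdhifa}\eqref{0000044} shows that $C$ is a coordinate line.

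The whole argument amounts to matching each required property to the inclusion that supplies it, so I do not expect a serious obstacle. The only point that will need a moment's care is the hereditary claim used in part~(a)---that a sub-configuration of an SNC-divisor is again an SNC-divisor and that its graph is a forest---but this is immediate, since all three SNC clauses and the acyclicity of the graph are manifestly preserved under passing to sub-configurations and induced subgraphs.
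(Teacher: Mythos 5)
Your proposal is correct, and your part~(a) is essentially the paper's argument: the SNC and forest conditions for $\Miss(f)$ are inherited from $\Cont(g)$ via~\eqref{23748rhf63t}, nonsingularity follows, and Proposition~\ref{938hdb67-gdhifa}\eqref{000004} supplies the coordinate-line condition. Where you diverge is in part~(b): for the coordinate-line step you combine Lemma~\ref{dijfaksdjfall00000}\eqref{000003} (missing curves have one place at infinity) with Proposition~\ref{938hdb67-gdhifa}\eqref{0000044} (a contracting curve with one place at infinity is a coordinate line), whereas the paper runs both parts through the single statement \ref{938hdb67-gdhifa}\eqref{000004}: in case~(b) each element of $\Cont(g)$ is nonsingular by~\eqref{23748rhf63t} and is a missing curve of $f$ by hypothesis, hence a coordinate line. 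Both routes are valid; the paper's is slightly more economical (one lemma serves both parts, and \eqref{000004} rests on \cite[4.6]{Dai:bir}), while yours invokes \eqref{0000044}, whose proof in the paper goes through the heavier Kodaira-dimension machinery of \cite{Gan:Kod} --- a perfectly legitimate but less elementary input for this particular step.
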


\begin{proof}
Applying statement~\eqref{23748rhf63t} in \ref{diofj;askdjf} to the morphism $g$ gives:
\begin{equation}  \label{dkfjpq83u9123[0i'dfop}
\begin{minipage}{.9\textwidth} \it
\item $D' = \displaystyle \sum_{C \in \Cont(g)} \!\!\! C$\ \ is
an SNC-divisor of $\aff^2$ and the graph $\Geul(D',\aff^2)$ is a forest.
\end{minipage}
\end{equation}
If $\Cont(g) \subseteq \Miss(f)$ then each element of $\Cont(g)$
is a {\it nonsingular\/} missing curve of $f$, and hence 
is a coordinate line by \ref{938hdb67-gdhifa}\eqref{000004};
then \eqref{dkfjpq83u9123[0i'dfop} implies that
$\Cont(g)$ is admissible, so (b) is proved.

If $\Miss(f) \subseteq \Cont(g)$ then, by \eqref{dkfjpq83u9123[0i'dfop},
$D = \sum_{C \in \Miss(f)} C$ is an SNC-divisor of $\aff^2$
and the graph $\Geul(D,\aff^2)$ is a forest; in particular each missing
curve of $f$ is nonsingular and hence
is a coordinate line by \ref{938hdb67-gdhifa}\eqref{000004};
so $\Miss(f)$ is admissible and (a) is proved.
\end{proof}

\begin{lemma}   \label {djf9q823j;d8zxcmv}
Let $\aff^2 \xrightarrow{ f } \aff^2 \xrightarrow{ g } \aff^2$
be birational morphisms.
\begin{enumerata}

\item If $\Miss(f) \nsubseteq \Cont(g)$ then $q( g \circ f ) > q(g)$.

\item If $\Cont(g) \nsubseteq \Miss(f)$ then $c( g \circ f ) > c(f)$.

\end{enumerata}
\end{lemma}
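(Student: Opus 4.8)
The plan is to read off both inequalities from the description of $\Gamma_{g\circ f}$ supplied by \ref{Doifqrqkkdsjf8}, together with the ``at most one dominating curve'' principle of \ref{9238ch928nhf8n34r}\eqref{023c093mrfh9129jf-ii}. Write $X \xrightarrow{f} Y \xrightarrow{g} Z$ with $X=Y=Z=\aff^2$, and let $\Gamma_f,\Gamma_g,\Gamma_{g\circ f}$ be the unions of missing curves as in \ref{Doifqrqkkdsjf8}. Since $Y=\aff^2$ is affine, part~\eqref{qhv6ghqwka7} of that lemma gives the clean equality $\Gamma_{g\circ f}=\Gamma_g\cup\overline{g(\Gamma_f)}$, which I will use throughout part~(a).

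For part~(a), I first note that $\Gamma_g\subseteq\Gamma_{g\circ f}$, so every irreducible component of $\Gamma_g$ (that is, every missing curve of $g$), being an irreducible curve sitting inside the finite union of curves $\Gamma_{g\circ f}$, must itself be a component of $\Gamma_{g\circ f}$; hence $\Miss(g)\subseteq\Miss(g\circ f)$ and $q(g)\le q(g\circ f)$. To obtain strictness, I choose $C\in\Miss(f)\setminus\Cont(g)$, which exists by hypothesis. Since $C\notin\Cont(g)$, its image is not a point, so $D:=\overline{g(C)}$ is a curve in $Z$; as $C\subseteq\Gamma_f$ we get $D\subseteq\overline{g(\Gamma_f)}\subseteq\Gamma_{g\circ f}$, and irreducibility forces $D\in\Miss(g\circ f)$. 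On the other hand $g(C)$ is dense in $D$, so \ref{9238ch928nhf8n34r}\eqref{023c093mrfh9129jf-ii} (applied to $g$ and the curve $D$) yields $D\notin\Miss(g)$. Thus $D$ is a missing curve of $g\circ f$ that is not a missing curve of $g$, giving $q(g\circ f)>q(g)$.

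For part~(b), I argue directly at the level of contracting curves, without \ref{Doifqrqkkdsjf8}. First $\Cont(f)\subseteq\Cont(g\circ f)$, since if $f(C)$ is a point then so is $(g\circ f)(C)=g(f(C))$; hence $c(f)\le c(g\circ f)$. For strictness I choose $D\in\Cont(g)\setminus\Miss(f)$, which exists by hypothesis. Since $D\notin\Miss(f)$, \ref{9238ch928nhf8n34r}\eqref{023c093mrfh9129jf-ii} (applied to $f$ and the curve $D\subset Y$) provides a curve $C\subset X$ with $f(C)$ dense in $D$; as $D$ is a curve, $f(C)$ is not a point and so $C\notin\Cont(f)$. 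But $D\in\Cont(g)$ means $g(D)$ is a point, whence $(g\circ f)(C)\subseteq g(D)$ is a point and $C\in\Cont(g\circ f)$. Thus $C$ is a contracting curve of $g\circ f$ not lying in $\Cont(f)$, giving $c(g\circ f)>c(f)$.

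The only genuine care needed — and the step where I expect to spend the most attention — is the bookkeeping of which copy of $\aff^2$ each curve inhabits, since the hypotheses $\Miss(f)\nsubseteq\Cont(g)$ and $\Cont(g)\nsubseteq\Miss(f)$ both compare curves living in the \emph{middle} $\aff^2$, while the conclusions concern curves in $Z$ (for $q$) and in $X$ (for $c$). The uniqueness clause in \ref{9238ch928nhf8n34r}\eqref{023c093mrfh9129jf-ii} is invoked in two complementary ways: in part~(a) it certifies that the newly produced curve $D$ is genuinely absent from $\Miss(g)$, and in part~(b) it produces the preimage curve $C$ dominating $D$. No finiteness subtlety intervenes, since $q$ and $c$ are finite cardinalities (bounded by $n(\,\cdot\,)$ via \ref{dijfaksdjfall00000}), so strict inclusion of the relevant curve sets yields the strict inequalities.
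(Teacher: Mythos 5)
Your proof is correct and follows essentially the same route as the paper's: in (a) you push the witness curve $C\in\Miss(f)\setminus\Cont(g)$ forward to $\overline{g(C)}$ and show it is a missing curve of $g\circ f$ but not of $g$, and in (b) you pull the witness $D\in\Cont(g)\setminus\Miss(f)$ back to a curve $C$ with $f(C)$ dense in $D$ and show it is contracted by $g\circ f$ but not by $f$. The paper states these steps more tersely (without explicitly citing \ref{Doifqrqkkdsjf8} and \ref{9238ch928nhf8n34r}\eqref{023c093mrfh9129jf-ii}), but the argument is the same.
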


\begin{proof}
(a) It is clear that $\Miss(g) \subseteq \Miss( g \circ f )$.
If $C$ is a missing curve of $f$ which is not contracted by $g$ 
then the closure $\overline{ g(C) }$ of $g(C)$ is a missing curve of
$g \circ f$ but not a missing curve of $g$, so 
$\Miss(g) \subset \Miss( g \circ f )$ and hence $q(g) < q( g \circ f )$.

(b) We have $\Cont(f) \subseteq \Cont( g \circ f )$.
If $C$ is a contracting curve of $g$ which is not a missing curve of $f$
then there exists a curve $C'\subset\aff^2$ such that $f(C')$ is a dense
subset of $C$.  Then $C'$ is a contracting curve of $g \circ f$ but not 
one of $f$, so $\Cont(f) \subset \Cont( g \circ f )$ and hence
$c(f) < c( g \circ f )$.
\end{proof}

In \ref{sdkfqp923wr2390094rjdfk} and \ref{new-sdkfqp923wr2390094rjdfk},
we write $\#\Gamma$ for the number of irreducible components of a closed set $\Gamma$,
and $\Gamma_\tau = \bigcup_{C \in \Miss(\tau)}C$ for any birational morphism $\tau$ of nonsingular surfaces.

\begin{lemma} \label {sdkfqp923wr2390094rjdfk}
Let $f : \aff^2 \to \aff^2$ be a birational morphism
and $\Gamma$ a union of missing curves of $f$ such that
\begin{equation}  \label{difuqp29334343iucivju/}
\text{each missing curve of $f$ is either included in $\Gamma$
or disjoint from $\Gamma$.}
\end{equation}
Then $\# f^{-1}( \Gamma ) = \#\Gamma$ and 
$f$ can be factored as $\aff^2 \xrightarrow{g} \aff^2 \xrightarrow{h} \aff^2$,
where $g,h$ are birational morphisms, $\Gamma_h=\Gamma$ and $\Gamma_g \cap \exc(h) = \emptyset$.
\end{lemma}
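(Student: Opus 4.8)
The plan is to construct the factorization first and then read off $\#f^{-1}(\Gamma)=\#\Gamma$ as a consequence. Fix a diagram as in \ref{diofj;askdjf} with $n=n(f)$, say $\aff^2=X\hookrightarrow Y_n\xrightarrow{\pi_n}\cdots\xrightarrow{\pi_1}Y_0=\aff^2$, and write $\Gamma'$ for the union of those missing curves of $f$ not included in $\Gamma$; by condition \eqref{difuqp29334343iucivju/} the sets $\Gamma,\Gamma'$ are disjoint and $\Gamma_f=\Gamma\sqcup\Gamma'$. First I would classify the fundamental points. By \ref{dijfaksdjfall00000}\eqref{POdsifias2874wke} every proper fundamental point lies on a missing curve, and the separation hypothesis forces all missing curves through a given fundamental point to lie on the same side: if $C\subseteq\Gamma$ and $C'\not\subseteq\Gamma$ both passed through $P$, then $C'$ would meet $\Gamma$ at $P$, contradicting that $C'$ is disjoint from $\Gamma$. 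Hence, using \eqref{1329jf9m39c37hf} to push each center $P_i$ down to its (proper) image in $Y_0$, each center is unambiguously a \emph{$\Gamma$-center} or a \emph{$\Gamma'$-center}, according to whether that image lies on $\Gamma$ or not.

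Next I would reorder the blowups so that all $\Gamma$-blowups are performed first, i.e.\ nearest $Y_0$. This is legitimate because $\Gamma$-centers and $\Gamma'$-centers lie over disjoint proper points of $Y_0$ (a point is either on $\Gamma$ or not), so the two families of infinitely near points are independent and the composite $\pi_1\circ\cdots\circ\pi_n$ is unchanged when the $\Gamma$-blowups are moved to the bottom. After reordering I obtain $X\hookrightarrow Y_n\to Y_m\to Y_0$ in which $Y_m\to Y_0$ consists exactly of the $\Gamma$-blowups and $Y_n\to Y_m$ of the $\Gamma'$-blowups. Equivalently, the minimal decomposition $\Deul$ of $f$, equipped with the ordering of $\Miss(f)$ that lists the components of $\Gamma$ first, splits into a ``bottom'' part $\Deul_h$ carrying the curves of $\Gamma$ and a ``top'' part $\Deul_g$ carrying those of $\Gamma'$.

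The heart of the proof, and the step I expect to be the main obstacle, is to show that $\Deul_h$ is itself the minimal decomposition of a birational endomorphism $h$ of $\aff^2$ with $\Miss(h)$ equal to the set of components of $\Gamma$, and that $\Deul_g$ likewise defines $g$, so that $f=h\circ g$. The mechanism is the one already used in the proof of \ref{g64f4j3k21ld}: ordering by side makes the matrices $\Eeul_\Deul,\epsilon_\Deul,\epsilon'_\Deul,\mu_\Deul$ of \cite[2.7, 2.8]{Dai:bir} block-diagonal for the $\Gamma/\Gamma'$ partition, so the realizability criterion $\det(\epsilon'_\Deul\mu_\Deul)=\pm1$ of \cite[4.3]{Dai:bir} block-decomposes into the corresponding statement for each of $\Deul_h$ and $\Deul_g$ separately; this yields $h$ and $g$ (with an open $\aff^2\subseteq Y_m$ into which $g$ maps), and $\Gamma_h=\Gamma$ holds by construction. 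It then remains to check $\Gamma_g\cap\exc(h)=\emptyset$: the curves of $\Gamma_g$ are the strict transforms in $Y_m$ of the components of $\Gamma'$, while $\exc(h)$ is the $\Gamma$-exceptional configuration; since every $\Gamma$-center lies off every component of $\Gamma'$ (again by separation), blowing up the $\Gamma$-centers neither meets nor alters the $\Gamma'$-curves, so the two loci are disjoint.

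With the factorization in hand the equality of component numbers is quick. Applying \ref{788hkckdssljkhjh25}(b) to $h$, whose missing curves are precisely the components of $\Gamma=\Gamma_h$, gives $h^{-1}(\aff^2\setminus\Gamma)=\aff^2\setminus\exc(h)$ and hence $h^{-1}(\Gamma)=\exc(h)$; by \ref{dijfaksdjfall00000}\eqref{000001} this locus has $c(h)=q(h)=\#\Gamma$ components. Now $f^{-1}(\Gamma)=g^{-1}\big(\exc(h)\big)$, and each contracting curve $D$ of $h$ lies in $\aff^2\setminus\Gamma_g$ (as $D\subseteq\exc(h)$ and $\Gamma_g\cap\exc(h)=\emptyset$), a locus over which $g$ restricts to an isomorphism by \ref{788hkckdssljkhjh25}(c); thus $g^{-1}(D)$ is a single irreducible curve and distinct contracting curves of $h$ give distinct preimages. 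Therefore $\#f^{-1}(\Gamma)=c(h)=\#\Gamma$, which completes the proof.
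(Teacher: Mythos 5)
Your setup is largely sound and runs parallel to the paper: reordering the blow-ups so that the points over $\Gamma$ come first, the block-diagonal structure of $\Eeul_\Deul$, $\epsilon_\Deul$, $\epsilon'_\Deul$, $\mu_\Deul$ induced by the separation hypothesis, and the disjointness $\Gamma_g \cap \exc(h) = \emptyset$ are all correct. Your final deduction of $\# f^{-1}(\Gamma) = \#\Gamma$ \emph{from} the factorization is also valid, and is in fact slicker than the paper's double-counting argument. But the step you yourself identify as the heart of the proof contains a genuine gap: you invoke $\det(\epsilon'_\Deul \mu_\Deul) = \pm1$ from \cite[4.3(b)]{Dai:bir} as a ``realizability criterion,'' i.e.\ as a \emph{sufficient} condition for abstract blow-up data to come from a birational endomorphism of $\aff^2$. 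It is nothing of the sort: 4.3(b) is a \emph{necessary} condition satisfied by minimal decompositions of morphisms that already exist, and no converse is available to quote. Even granting the most charitable reading — that the determinant of the bottom block computes the divisor class group and units of the intermediate surface $W = Y_m \setminus \bigl( \widetilde\Gamma^{Y_m} \cup \bigcup_{j \in J} \widetilde E_j^{Y_m} \bigr)$, so that $W$ is factorial with trivial units — this still does not yield $W \isom \aff^2$, because the characterization \cite[4.2]{Dai:bir} requires $W$ to be \emph{affine}, and factoriality with trivial units does not imply affineness (consider $\aff^2$ minus a point).

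Establishing that $W$ is affine is precisely where the paper invests its main effort, and it is the step where hypothesis \eqref{difuqp29334343iucivju/} does essential geometric work beyond making matrices block-diagonal. The paper (in the more general Proposition~\ref{new-sdkfqp923wr2390094rjdfk}, of which the Lemma is an immediate consequence) first shows that no contracting curve of $h$ is complete, using $\Gamma_g \cap \exc(h) = \emptyset$ and the affineness of $W \setminus \Gamma_g$; it then shows $W$ is connected at infinity, where the contradiction is exactly that a disconnection would force the closure of a strict transform of a component of $\Gamma'$ to meet some $\Gamma$-exceptional curve $\widetilde E_j^{Y_m}$ with $j \le m$, violating $\Gamma \cap \Gamma' = \emptyset$; only then does \cite[2.2]{Dai:bir} give that $W$ is affine, after which $q(h) = c(h)$ and \cite[3.4]{Dai:bir} give factoriality and trivial units, and \cite[4.2]{Dai:bir} gives $W \isom \aff^2$. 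Your proposal contains no substitute for this affineness argument, so the factorization through $\aff^2$ — and with it your concluding component count, which depends on applying \ref{788hkckdssljkhjh25} and \ref{dijfaksdjfall00000}\eqref{000001} to $h$ as an endomorphism of $\aff^2$ — is not established.
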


Result \ref{sdkfqp923wr2390094rjdfk} is an immediate consequence of:

\begin{proposition}  \label {new-sdkfqp923wr2390094rjdfk}
Let $f : X \to Y$ be a birational morphism where
$X,Y$ are nonsingular affine surfaces and 
let $\Gamma \subset Y$ be a union of missing curves of $f$ satisfying~\eqref{difuqp29334343iucivju/}.
Then the following hold.
\begin{enumerata}

\item $f$ can be factored as $X \xrightarrow{g} W \xrightarrow{h} Y$ where
$W$ is a nonsingular affine surface, $g,h$ are birational morphisms,
$\Gamma_h=\Gamma$, $c(h) = \# f^{-1}(\Gamma)$ and $\Gamma_g \cap \exc(h) = \emptyset$.

\item If $X,Y$ are factorial with trivial units then $\# f^{-1}( \Gamma ) = \#\Gamma$ and,
in part~{\rm (a)}, $W$ can be chosen to be factorial with trivial units.

\item If $X=\aff^2$ and $Y$ is factorial, 
then $Y=\aff^2$ and we can choose $W=\aff^2$ in part~{\rm (a)}.

\end{enumerata}
\end{proposition}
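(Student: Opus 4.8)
The plan is to analyse the minimal decomposition of $f$ and to reorganise its blow-ups so that those lying over $\Gamma$ are performed first; this produces $W$ together with the factorisation $f=h\circ g$, after which every clause reduces to bookkeeping about the two groups of blow-ups, save for one genuinely analytic point (the affineness of $W$). I begin with the separation of fundamental points forced by \eqref{difuqp29334343iucivju/}. Let $\Gamma'$ be the union of the missing curves of $f$ not contained in $\Gamma$; by hypothesis each of these is disjoint from $\Gamma$, so $\Gamma\cap\Gamma'=\emptyset$. By \ref{dijfaksdjfall00000}\eqref{POdsifias2874wke} every fundamental point lies on some missing curve, and a fundamental point cannot lie on both a component of $\Gamma$ and a component of $\Gamma'$ (the latter being disjoint from $\Gamma$); hence $\cent(f)=\Sigma\sqcup\Sigma'$ with $\Sigma=\cent(f)\cap\Gamma$ and $\Sigma'=\cent(f)\cap\Gamma'$, and $\Sigma,\Sigma'$ are disjoint finite subsets of $Y$.

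Next I choose a diagram \eqref{mindec} with $n=n(f)$ and keep the notation $P_i,E_i$ of \ref{diofj;askdjf}. By \eqref{1329jf9m39c37hf} the image of $P_i$ on $Y$ is a fundamental point, so $\{1,\dots,n\}=I_\Gamma\sqcup I_0$ according as this image lies in $\Sigma$ or in $\Sigma'$. Since $\Sigma,\Sigma'$ are disjoint, the two families of (infinitely near) centres sit over disjoint points of $Y$, there are no cross-cluster infinitely-near relations, and the blow-ups commute; I may therefore reorder them so as to perform all $I_\Gamma$-blow-ups first, obtaining $Y_n\xrightarrow{\rho}W'\xrightarrow{\sigma}Y$ with $\sigma\circ\rho=\pi_1\circ\cdots\circ\pi_n$, where $\rho$ blows up over $\Sigma'$ and $\sigma$ over $\Sigma$. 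Let $B$ be the union of the curves of $Y_n\setminus X$ lying over $\Sigma$; as these are disjoint from the exceptional locus of $\rho$, they map isomorphically onto a closed curve $\rho(B)\subset W'$, and I set $W=W'\setminus\rho(B)$. Then $g:=\rho|_X$ maps $X$ into $W$ and $\sigma$ induces a birational morphism $h:W\to Y$ with $f=h\circ g$. By construction the missing curves of $h$ are exactly the components of $\Gamma$, i.e.\ $\Gamma_h=\Gamma$, while $\exc(h)$ lies over $\Sigma$ and $\Gamma_g$ (the proper transforms in $W$ of the components of $\Gamma'$) lies over $\Sigma'$; hence $\Gamma_g\cap\exc(h)=\emptyset$. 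Granting that $W$ is affine, \ref{788hkckdssljkhjh25} applied to $h$ gives $h^{-1}(\Gamma)=\exc(h)$, and since $g$ is an isomorphism near $\exc(h)$ we get $f^{-1}(\Gamma)=g^{-1}(\exc(h))$ with $\#f^{-1}(\Gamma)=\#\exc(h)=c(h)$.

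The main obstacle is to show that $W$ is affine. Applying \ref{788hkckdssljkhjh25}(c) to $g$ (whose source $X$ is affine) already shows that $W\setminus\Gamma_g$ is affine; the difficulty is to restore the disjoint curves $\Gamma_g$ and stay affine---equivalently, to prove that $Y\setminus\Gamma$ is affine and that the partial contraction $h$ over $\Sigma$ preserves affineness. I would settle this with the affineness/contractibility criterion underlying the results of \cite{Dai:bir} for birational morphisms of this shape, using the self-intersection data \eqref{23748rhf63t}--\eqref{d023876g77f} of the curves deleted to form $W$; this is the technical core of~(a). For~(b), assume $X,Y$ factorial with trivial units. That $W$ has trivial units is then immediate, since $\bk^*\subseteq\Gamma(W)^*\hookrightarrow\Gamma(X)^*=\bk^*$ through the dominant map $g$; factoriality of $W$, i.e.\ $\Cl(W)=0$, is deduced from $\Cl(X)=0$ by accounting for the classes of the curves that $g$ blows up and deletes, and this is where the freedom ``$W$ can be chosen'' is used, the choice pinning down an open subset with vanishing class group. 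Once $W$ is factorial with trivial units, the equality $c(h)=q(h)$ holds by the general form of \ref{iuqr2378632r6} (Remark~2.13 of \cite{Dai:bir}), valid for birational morphisms between nonsingular surfaces with trivial class group and units; as $q(h)=\#\Gamma_h=\#\Gamma$, we conclude $\#f^{-1}(\Gamma)=c(h)=q(h)=\#\Gamma$.

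For~(c), let $X=\aff^2$ and $Y$ be factorial. The units of $Y$ are automatically trivial, because $\Gamma(Y)^*\hookrightarrow\Gamma(\aff^2)^*=\bk^*$ via $f^*$; thus $X$ and $Y$ are factorial with trivial units and part~(b) applies, yielding $W$ factorial with trivial units together with a birational morphism $g:\aff^2\to W$. It remains to recognise $Y$ and $W$ as $\aff^2$. Each of them is a nonsingular affine factorial surface with trivial units admitting a birational, hence dominant, morphism from $\aff^2$; by the characterisation of the affine plane among such surfaces (dominance by $\aff^2$ forces logarithmic Kodaira dimension $-\infty$, and a smooth factorial affine surface with trivial units and this property is $\aff^2$) we obtain $Y\isom\aff^2$ and $W\isom\aff^2$, which is the desired strengthening.
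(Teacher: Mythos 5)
Your construction of $W$ follows the paper's route (reorder the blow-ups of a minimal decomposition so that those over $\Gamma$ come first, then delete suitable curves from the intermediate surface $W'$), but the proposal has genuine gaps. In (a): first, a slip in the definition of $B$ — the curves of $Y_n\setminus X$ \emph{lying over the finite set} $\Sigma$ are only the exceptional-type curves; the strict transforms of the components of $\Gamma$ lie over $\Gamma$, not over $\Sigma$, so as written they survive in $W$, the components of $\Gamma$ are then dominated by curves of $W$, and $\Gamma_h=\Gamma$ fails. One must delete $\widetilde\Gamma^{W'}$ as well, which is what the paper does. Second, and this is the real gap: the affineness of $W$, which you yourself call the technical core of (a), receives no proof — you appeal to an unnamed ``affineness/contractibility criterion'' without stating or checking its hypotheses. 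What the paper actually does is: (i) show that no contracting curve of $h$ is complete, since $\exc(h)\subset W\setminus\Gamma_g$ and $W\setminus\Gamma_g$ is affine by \ref{788hkckdssljkhjh25} applied to $g$; (ii) show that $W$ is connected at infinity, by a nontrivial connectedness argument in a projective completion of $Y_m$ that uses $\Gamma\cap\Gamma'=\emptyset$ in an essential way; and only then (iii) apply \cite[2.2]{Dai:bir}. Step (ii) is precisely what your sketch omits, and nothing in your text substitutes for it.

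In (b) your logic runs backwards and rests on an unproved claim. You assert that $\Cl(W)=0$ follows from $\Cl(X)=0$ ``by accounting for the classes of the curves that $g$ blows up and deletes''; but what such accounting gives is only that $\Cl(W)$ is generated by the classes of the components of $\Gamma_g$ (since $\Cl(W)$ surjects onto $\Cl(W\setminus\Gamma_g)\isom\Cl(X\setminus\exc(g))=0$), and there is no a priori reason these classes vanish; the ``freedom in choosing $W$'' cannot conjure this away, as the construction of (a) determines $W$. You then derive $q(h)=c(h)$, hence $\#f^{-1}(\Gamma)=\#\Gamma$, from this unproved factoriality. The paper argues in the opposite direction, and the idea you are missing is a symmetric counting argument: \cite[3.4]{Dai:bir} gives only the inequality $q(h)\le c(h)$, i.e.\ $\#\Gamma\le\#f^{-1}(\Gamma)$; since $\Gamma'$ also satisfies \eqref{difuqp29334343iucivju/}, the same gives $\#\Gamma'\le\#f^{-1}(\Gamma')$; and since $\cent(f)\subseteq\Gamma\cup\Gamma'$ and $c(f)=q(f)$ by \cite[2.9]{Dai:bir}, one has $\#f^{-1}(\Gamma)+\#f^{-1}(\Gamma')=c(f)=q(f)=\#\Gamma+\#\Gamma'$. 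These relations force both inequalities to be equalities, which yields $\#f^{-1}(\Gamma)=\#\Gamma$ and $q(h)=c(h)$, and only then does \cite[3.4]{Dai:bir} (combined with the connectedness at infinity established in (a)) give that $W$ is factorial with trivial units. Finally, in (c), the characterization of $\aff^2$ you invoke via logarithmic Kodaira dimension is a characteristic-zero statement as phrased; the paper works over a field of arbitrary characteristic and instead cites \cite[4.2]{Dai:bir}, which is tailored to that generality.
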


\begin{proof} 
(a) We may choose a commutative diagram \eqref{mindec}
satisfying $n=n(f)$ and in which the blowings-up $\pi_1, \dots, \pi_n$ are
ordered in such a way that the points over $\Gamma$ are blown-up first,
i.e., there exists $m \in \{0, \dots, n\}$ such that 
$$
\setspec{ i \in \{1, \dots, n\} }{ P_i \in \Gamma \text{ or $P_i$ is infinitely near a point of $\Gamma$} }
= \{ 1, \dots, m\} .
$$
Refer to \ref{diofj;askdjf} for the notations.
If $0 \le j \le k \le n$ and $D \subset Y_j$ is a curve or a union of curves,
we write $\widetilde D^{Y_k}$ for the strict transform of $D$ on $Y_k$.
Let $J$ be the set of $j \in \{ 1, \dots, m \}$ such that $\widetilde E_j^{Y_n} \cap X = \emptyset$
(recall that $X$ is an open subset of $Y_n$) and define
\begin{equation} \label {2k3jb234i235389fsu}
W = Y_m \setminus 
\big( \widetilde\Gamma^{Y_m}\ \cup\  \bigcup_{ j \in J } \widetilde E_j^{Y_m} \big) .
\end{equation}
Then $W$ is a nonsingular surface and $f$ factors as 
$X \xrightarrow{g} W \xrightarrow{h} Y$ where
$g,h$ are birational morphisms, $\Gamma_h=\Gamma$
and $\Cont(h) = \setspec{ \widetilde E_i^{Y_m}\cap W }{ i \in \{ 1, \dots, m \} \setminus J}$; thus
\begin{equation}  \label {283471h8d6fy756545hki56ho4}
q(h) = \#\Gamma \quad\text{and}\quad c(h) = \# f^{-1}(\Gamma) .
\end{equation}
Let $\Gamma' = C_1 \cup \dots \cup C_s$
where $C_1, \dots, C_s \subset Y$ are the missing curves of $f$ not included in $\Gamma$;
then $\Gamma_f = \Gamma \cup \Gamma'$ and  $\Gamma \cap \Gamma' = \emptyset$.
Moreover,
\begin{equation} \label {8d88ds89sfgd98sg7s79sfu}
\Miss(g) = \setspec{ \widetilde C_i^{Y_m} \cap W }{ i = 1, \dots, s}.
\end{equation}
Indeed, consider $C \in \Miss(g)$.
If $h(C)$ is a point then $C= \widetilde E_j^{Y_m}\cap W$ for 
some  $j \in \{ 1, \dots, m \}$, and in fact  $\widetilde E_j^{Y_n}\cap X = \emptyset$ (so $j \in J$) 
otherwise $C$ would not be a missing curve of $g$;
then \eqref{2k3jb234i235389fsu} implies that $\widetilde E_j^{Y_m}\cap W = \emptyset$, which is absurd. 
So $h(C)$ is a dense subset of a curve $C_* \subset Y$. Now $C_* \in \Miss(f)$ by \ref{Doifqrqkkdsjf8},
and \eqref{2k3jb234i235389fsu} implies that $C \nsubseteq \widetilde\Gamma^{Y_m}$,
hence $C_* \nsubseteq \Gamma$;
so $C_* \subseteq \Gamma'$ and consequently $C = \widetilde C_i^{Y_m} \cap W$ for some $i \in \{1, \dots, s\}$.
This proves ``$\subseteq$'' in \eqref{8d88ds89sfgd98sg7s79sfu},
and ``$\supseteq$'' easily follows from \ref{Doifqrqkkdsjf8}.

From \eqref{8d88ds89sfgd98sg7s79sfu}, we deduce that
$\Gamma_g \cap \exc(h) \subseteq h^{-1}(\Gamma') \cap h^{-1}(\Gamma)$, so
\begin{equation} \label {jf923cp19dkQ02}
\Gamma_g \cap \exc(h) = \emptyset.
\end{equation}
To prove (a), there only remains to show that $W$ is affine.
Since $X$ is affine, \ref{788hkckdssljkhjh25} implies that $W\setminus \Gamma_g$ is affine;
as  (by \eqref{jf923cp19dkQ02}) $\exc(h) \subset W\setminus \Gamma_g$,
\begin{equation}  \label {jkxv1yy7e328763rf-.ee}
\text{no contracting curve of $h$ is a complete curve.}
\end{equation}

Embed $Y_0$ in a nonsingular projective surface $\overline Y_0$ and enlarge diagram~\eqref{mindec}
as follows:
\begin{equation*}  
\xymatrix@R=10pt{
{\overline Y_n} \ar[r]^{\bar\pi_n} & \cdots \ar[r]^{\bar\pi_{m+1}} & {\overline Y_m} \ar[r]^{\bar\pi_m} &
\cdots  \ar[r]^{\bar\pi_1} & {\overline Y_0} \\
Y_n \ar @{^{(}->}[u] \ar[r]^{\pi_n} & \cdots \ar[r]^{\pi_{m+1}} & Y_m \ar @{^{(}->}[u] \ar[r]^{\pi_m} &
\cdots  \ar[r]^{\pi_1} & Y_0 \ar @{^{(}->}[u] \\
X \ar @{^{(}->}[u] \ar[rr]^{ g }  && W \ar @{^{(}->}[u] \ar[rr]^{ h } &&  Y \ar @{=}[u] 
}
\end{equation*}
Let $D_i = \overline Y_i \setminus Y_i$ ($0 \le i \le n$).
Since $Y=Y_0$ is affine, $D_0$ is a nonempty connected union of curves and each irreducible component of 
$\overline \Gamma$ meets $D_0$ (where $\overline \Gamma$ denotes the closure of $\Gamma$ in $\overline Y_0$).
It follows that $D_m$ is a nonempty connected union of curves and that each irreducible component of 
$\overline{ \widetilde \Gamma^{Y_m} }$ meets $D_m$.
Let us argue that
\begin{equation}  \label {ll23o3o32o9r238yf}
\text{$W$ is connected at infinity.}
\end{equation}
Suppose that \eqref{ll23o3o32o9r238yf} is false;
then $\overline Y_m \setminus W$ is not connected, so some connected
component $\Ceul$ of $\overline Y_m \setminus W$ is disjoint from $D_m$.
Then $\Ceul$ does not contain any irreducible component of $\widetilde \Gamma^{Y_m}$.
By \eqref{2k3jb234i235389fsu}, it follows that $\Ceul \subseteq \bigcup_{ j \in J } \widetilde E_j^{Y_m}$.

We have
$\overline Y_m \setminus \big( W \setminus \Gamma_g \big) =
\overline{ \widetilde C_1^{Y_m} } \cup \dots \cup \overline{ \widetilde C_s^{Y_m} } \cup ( \overline Y_m \setminus W )$
by \eqref{8d88ds89sfgd98sg7s79sfu};
since $W \setminus \Gamma_g$ is affine,
$$
\overline{ \widetilde C_1^{Y_m} } \cup \dots \cup \overline{ \widetilde C_s^{Y_m} } \cup ( \overline Y_m \setminus W )
\ \ \text{is connected}.
$$
As  $\overline Y_m \setminus W$ is not connected and $\Ceul$ is a connected component of it,
some $\overline{ \widetilde C_i^{Y_m} }$ must meet $\Ceul$.
So there exist $i \in \{1, \dots, s\}$ and $j \in J$ such that 
$\overline{ \widetilde C_i^{Y_m} } \cap  \widetilde E_j^{Y_m} \neq \emptyset$. 
As $C_i \subseteq \Gamma'$, this implies that $P_j \in \Gamma'$ or $P_j$ is i.n.\ a point of $\Gamma'$;
since $j \le m$, we also have $P_j \in \Gamma$ or $P_j$ is i.n.\ a point of $\Gamma$;
so $\Gamma \cap \Gamma' \neq \emptyset$, a contradiction.
So \eqref{ll23o3o32o9r238yf} is true.

In view of \eqref{jkxv1yy7e328763rf-.ee}, \eqref{ll23o3o32o9r238yf} and the fact that $Y$ is affine,
applying \cite[2.2]{Dai:bir} to $h : W \to Y$ shows that $W$ is affine and concludes the proof of (a).

\medskip
\noindent (b) Assume that $X,Y$ are factorial and have trivial units;
then \cite[3.4]{Dai:bir} gives $q(h) \le c(h)$, so 
$\#\Gamma \le \# f^{-1}(\Gamma)$ by \eqref{283471h8d6fy756545hki56ho4}.
Since $\Gamma'$ also satisfies \eqref{difuqp29334343iucivju/}, it follows that
$\#\Gamma' \le \# f^{-1}(\Gamma')$.

By \ref{788hkckdssljkhjh25} we have $\cent(f) \subseteq \Gamma_f = \Gamma \cup \Gamma'$,
so $f^{-1}(\Gamma) \cup f^{-1}(\Gamma')$ is exactly the 
union of all contracting curves of $f$; as 
$f^{-1}(\Gamma) \cap f^{-1}(\Gamma') = \emptyset$,
we get $\#f^{-1}(\Gamma) + \#f^{-1}(\Gamma') = c(f)$.
We have $c(f)=q(f)$ by \cite[2.9]{Dai:bir}, and it is clear that
$q(f) = \#\Gamma + \#\Gamma'$, so
$$
\#\Gamma \le \# f^{-1}(\Gamma),\ \ 
\#\Gamma' \le \# f^{-1}(\Gamma')\ \ 
\text{ and }\ \ 
\#\Gamma + \#\Gamma' = \# f^{-1}(\Gamma)+ \# f^{-1}(\Gamma');
$$
consequently,
\begin{equation}  \label{dfj19234jqklefnd89czn}
\#\Gamma = \# f^{-1}(\Gamma)
\end{equation}
\begin{equation}  \label {8238h723grf6vxmfj}
q(h)=c(h) 
\end{equation}
where \eqref{8238h723grf6vxmfj} follows from \eqref{dfj19234jqklefnd89czn} and \eqref{283471h8d6fy756545hki56ho4}.
By \eqref{8238h723grf6vxmfj}, \eqref{ll23o3o32o9r238yf} and \cite[3.4]{Dai:bir},
we get that $W$ is factorial and has trivial units, which proves (b).

If $X=\aff^2$ and $Y$ is factorial then, by (b), $W$ may be chosen to be factorial;
then \cite[4.2]{Dai:bir} implies that $W$ and $Y$ are isomorphic to $\aff^2$,
which proves (c) and completes the proof of the Proposition.
\end{proof}

\begin{definition} \label {9f2039esdow9e} 
Let $f : X \to Y$ be a birational morphism of nonsingular surfaces.
Consider a diagram \eqref{mindec} satisfying $n=n(f)$ and with notation as in \ref{diofj;askdjf}
(for each $i \in \{1, \dots, n\}$,
$\pi_i : Y_i \to Y_{i-1}$ be the blowing-up of $Y_{i-1}$ at the point $P_i \in Y_{i-1}$).

\begin{enumerata}

\item Let $C$ be a missing curve of $f$.
For each $i = 0, 1, \dots, n$, let $C^{Y_i} \subset Y_i$ denote the strict transform
of $C$ on $Y_i$ ($C^{Y_0} = C$). Then we define the natural number
$$
n(f,C) = \text{ cardinality of the set } \setspec{ i }{ 1 \le i \le n,\ P_i \in C^{Y_{i-1}} } 
$$
and note that $n(f,C)$ depends only on $(f,C)$, i.e., is independent of the choice of diagram~\eqref{mindec}.
To indicate that $n(f,C) = k$, we say that ``$C$ is blown-up $k$ times''.

\item For each $i = 1, \dots, n$, let $\bar P_i \in Y_0$ be the image of $P_i$ by
$\pi_1 \circ \cdots \circ \pi_{i-1} : Y_{i-1} \to Y_0$.
For each $P \in Y$, define the natural number
$$
n(f,P) = \text{ cardinality of the set } \setspec{ i }{ 1 \le i \le n,\  \bar P_i = P } 
$$
and note that $n(f,P)$ depends only on $(f,P)$,
i.e., is independent of the choice of the diagram \eqref{mindec} used for defining it.

\end{enumerata}
\end{definition}

\begin{remarks}
Let $f : X \to Y$ be a birational morphism of nonsingular surfaces.

\begin{enumerata}

\item Let $C \in \Miss(f)$.
Then $n(f,C)=0 \Leftrightarrow C \cap f(X) = \emptyset$,
and $n(f,C)=1$ implies that there exists exactly one fundamental point of $f$ lying on $C$.
Note that if $X=\aff^2=Y$ then each missing curve contains at least one fundamental point
(\ref{dijfaksdjfall00000}\eqref{POdsifias2874wke}),
so each missing curve is blown-up at least once.

\item Let $P \in Y$. Then
$n(f,P)>0 \Leftrightarrow P \in \cent(f)$, 
where ``$\Leftarrow$'' is obvious and ``$\Rightarrow$'' follows from \eqref{1329jf9m39c37hf}.
It is also clear that $n(f) = \sum_{P \in Y} n(f,P)$.

\end{enumerata}
\end{remarks}

\begin{lemma} \label {90f20rdw9019jds}
Let $X \xrightarrow{f} Y \xrightarrow{g} Z$ be birational morphisms of nonsingular surfaces, and assume
that $n(g\circ f) = n(g)+n(f)$ or $X=Y=Z=\aff^2$.
\begin{enumerata}

\item  \label {70209ru29fyt2suddh29}  
Let $D \in \Miss(g)$; then $D \in \Miss(g\circ f)$ and $n(g\circ f, D) = n(g,D)$.

\item  \label {lwoj837456djd93}  
Let $C \in \Miss(f) \setminus \Cont(g)$ and let $D$ be the closure of $g(C)$ in $Z$.
Then:
\begin{itemize}

\item  $D \in \Miss(g \circ f)$ and $n(f,C) \le n(g\circ f, D)$;

\item $ n(f,C) = n(g\circ f, D) \implies C \cap \exc(g) = \emptyset$;

\item if $g(C)=D$ or $C\isom \aff^1$ then
$$ 
n(f,C) = n(g\circ f, D) \iff C \cap \exc(g) = \emptyset .
$$

\end{itemize}
\item For each $P \in Z$,\ \ $\displaystyle n(g \circ f, P) = n(g,P) + \sum_{Q \in g^{-1}(P)} n(f,Q)$.
\end{enumerata}
\end{lemma}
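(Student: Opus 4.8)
The plan is to build a single minimal decomposition of $g\circ f$ by concatenating minimal decompositions of $g$ and of $f$, and then to read off all three statements from it. First note that in either case of the hypothesis we have $n(g\circ f)=n(g)+n(f)$: this is the assumption in the first case, and in the case $X=Y=Z=\aff^2$ it is \ref{1dijfaksdjfall00100}. Choose a diagram \eqref{mindec} for $g$, so that $Y$ is an open subset of $Z_{n(g)}$, and a diagram \eqref{mindec} for $f$ with $X$ open in $Y_{n(f)}$. Since the centers of the blowings-up of $f$ all lie over the open set $Y\subseteq Z_{n(g)}$, we may transport them and continue blowing up $Z_{n(g)}$, obtaining a tower $Z_{n(g)+n(f)}\to\cdots\to Z_{n(g)}\to\cdots\to Z_0=Z$ in which $X$ is open and which realizes $g\circ f$ as a composite of $n(g)+n(f)=n(g\circ f)$ blowings-up; hence it is a minimal decomposition of $g\circ f$. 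Its first $n(g)$ centers are those of $g$, and its last $n(f)$ centers are those of $f$; moreover the image in $Z$ of an $f$-center equals $g$ applied to its image in $Y$.

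With this picture, (c) is pure bookkeeping: a center of $g\circ f$ maps to $P\in Z$ either because it is a $g$-center mapping to $P$ (there are $n(g,P)$ of these) or because it is an $f$-center whose image $Q\in Y$ satisfies $g(Q)=P$ (there are $\sum_{Q\in g^{-1}(P)}n(f,Q)$ of these, a finite sum since only finitely many $Q$ have $n(f,Q)>0$). For (a): since $D\in\Miss(g)$, no curve of $Y$ maps densely onto $D$ (\ref{9238ch928nhf8n34r}), so the strict transform of $D$ on $Z_{n(g)}$ is disjoint from $Y$; since the $f$-centers all lie over $Y$, none of them can lie on this strict transform or its later transforms, whence $n(g\circ f,D)$ counts only $g$-centers and equals $n(g,D)$. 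That $D\in\Miss(g\circ f)$ is immediate from $\Miss(g)\subseteq\Miss(g\circ f)$.

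For (b), put $D=\overline{g(C)}$. Since $C\in\Miss(f)\setminus\Cont(g)$, the curve $C$ is not contained in $\exc(g)$, so $g$ maps $C$ birationally onto a dense subset of $D$; hence the strict transform of $D$ on $Z_{n(g)}$ is the closure of $C$ and meets $Y$ exactly in $C$. Consequently, on each $W_i$, the strict transforms of $D$ and of $C$ coincide, so an $f$-center lies on the former iff it lies on the latter. Writing $r$ for the number of $g$-centers lying on the strict transform of $D$, I therefore get $n(g\circ f,D)=r+n(f,C)$, which yields both $D\in\Miss(g\circ f)$ (via \ref{Doifqrqkkdsjf8}) and the inequality $n(f,C)\le n(g\circ f,D)$, with equality iff $r=0$. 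The pivotal observation is the equivalence $r>0\iff D\cap\cent(g)\neq\emptyset$: if $Q\in D\cap\cent(g)$, then the first blowing-up over $Q$ occurs at $Q$ itself, before anything over $Q$ has been touched, so its center lies on $D^{Z_0}=D$; conversely the image of any $g$-center lying on the strict transform of $D$ is a fundamental point of $g$ lying on $D$. Combining this with $\exc(g)=g^{-1}(\cent(g))$ (so that $C\cap\exc(g)=\emptyset\iff g(C)\cap\cent(g)=\emptyset$) and with the inclusion $g(C)\subseteq D$ gives the forward implication $r=0\Rightarrow C\cap\exc(g)=\emptyset$ with no extra hypothesis, which is the second bullet.

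The main obstacle is the reverse implication $C\cap\exc(g)=\emptyset\Rightarrow r=0$: one must show that a fundamental point of $g$ lying on $D$ already lies on $g(C)$, which forces $g(C)=D$, and this is exactly where the extra hypothesis enters. If $g(C)=D$ it is immediate. If $C\isom\aff^1$, I would argue $g(C)=D$ as follows: $g|_C$ is a birational morphism from $\aff^1$ onto $D$, which lifts through the normalization $\widetilde D\to D$ to a birational morphism $\aff^1\to\widetilde D$ of smooth curves, hence an open immersion; as $D$ is rational with one place at infinity (\ref{dijfaksdjfall00000}\eqref{000003}), $\widetilde D\isom\aff^1$, and a birational morphism $\aff^1\to\aff^1$ is an isomorphism, so $g(C)$ equals the image of the normalization, namely $D$. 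Thus both cases reduce to $g(C)=D$, for which the pivotal equivalence gives $r>0\iff D\cap\cent(g)\neq\emptyset\iff g(C)\cap\cent(g)\neq\emptyset\iff C\cap\exc(g)\neq\emptyset$, completing the third bullet and the proof.
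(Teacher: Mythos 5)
Your proof follows the paper's own argument almost step for step: the same concatenation of minimal decompositions of $g$ and $f$ into a single decomposition of $g\circ f$, minimal because $n(g\circ f)=n(g)+n(f)$ (by hypothesis, or by \ref{1dijfaksdjfall00100} when $X=Y=Z=\aff^2$); the same counting of images of centers for (c); the same observation for (a) that the strict transform of $D$ stays disjoint from $Y$ while all $f$-centers lie over $Y$; and for (b) the same identity $n(g\circ f,D)=r+n(f,C)$, the same pivotal equivalence $r>0\iff D\cap\cent(g)\neq\emptyset$ (which, note, uses minimality of the decomposition of $g$ via \eqref{1329jf9m39c37hf} for the direction you state as ``the image of any $g$-center on the strict transform of $D$ is a fundamental point''), and the same reduction of the backward implication to the case $g(C)=D$.

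The one point where you genuinely deviate is the final claim that $C\isom\aff^1$ forces $g(C)=D$, and there your argument has a gap relative to the stated generality. You invoke \ref{dijfaksdjfall00000}\eqref{000003} to get that $D$ is rational with one place at infinity, hence that its normalization $\widetilde D$ is $\aff^1$; but that lemma is only about birational morphisms $\aff^2\to\aff^2$, so this step is available only in the branch $X=Y=Z=\aff^2$ of the hypothesis. In the other branch ($n(g\circ f)=n(g)+n(f)$ with arbitrary nonsingular surfaces) nothing forces $D=\overline{g(C)}$ to be affine: $D$ can be a complete curve, in which case $\widetilde D\isom\proj^1$, your open immersion $\aff^1\hookrightarrow\widetilde D$ need not be surjective, and $g(C)$ may miss a point of $D$ (already $Y=\aff^2\hookrightarrow Z=\proj^2$ with $C$ a line in $\aff^2$ shows the claim $g(C)=D$ failing). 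To be fair, the paper's own justification at exactly this spot --- that the dominant morphism $C\to D$ is ``necessarily finite, hence surjective'' --- is valid only when $D$ is affine (e.g.\ when $Z=\aff^2$, or $Z$ affine) and breaks down for the same reason when $D$ is complete; so in the case that matters for the rest of the paper, $X=Y=Z=\aff^2$, your normalization argument is a perfectly sound substitute for the paper's one-liner, but neither your proof nor the paper's establishes the third bullet in the general branch without an additional argument that the relevant curves are affine.
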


\begin{proof} 
If $X=Y=Z=\aff^2$ then $n(g\circ f) = n(g)+n(f)$ by \ref{1dijfaksdjfall00100};
so $n(g\circ f) = n(g)+n(f)$ holds in all cases.
Let $m=n(f)$ and $n=n(g)$. 
Choose commutative diagrams (I) and (II) as follows:
\begin{equation*} 
\xymatrix@R=12pt{
Y_{m}  \ar @{} [drr] | {\mbox{\small (I)}} \ar[r]^{\pi_{m}} & \cdots \ar[r]^{\pi_1} & Y_0\\
X \ar @{^{(}->}[u] \ar[rr]_{ f }  &&  Y \ar @{=}[u] 
}
\quad
\xymatrix@R=12pt{
Z_{n} \ar @{} [drr] | {\mbox{\small (II)}}  \ar[r]^{\rho_{n}} & \cdots \ar[r]^{\rho_1} & Z_0\\
Y \ar @{^{(}->}[u] \ar[rr]_{ g }  &&  Z \ar @{=}[u] 
}
\end{equation*}
and use them to build the commutative diagram
\begin{equation} \tag{III}
\xymatrix@R=12pt{
Z_{n+m} \ar[r]^{\rho_{n+m}} & \cdots \ar[r]^{\rho_{n+2}} & Z_{n+1} \ar[r]^{\rho_{n+1}} & Z_n \ar[r]^{\rho_n} & \cdots \ar[r]^{\rho_1} & Z_0 \\
Y_{m}  \ar @{^{(}->}[u] \ar[r]^{\pi_{m}} & \cdots \ar[r]^{\pi_2} & Y_1 \ar @{^{(}->}[u] \ar[r]^{\pi_{1}} & Y_0\ar @{^{(}->}[u] \\
X \ar @{^{(}->}[u] \ar[rrr]^{ f }  &&&  Y \ar @{=}[u]  \ar[rr]^{ g }  &&  Z \ar @{=}[uu] 
}
\end{equation}
In the three diagrams, ``$\hookrightarrow$'' are open immersions, $Y_i, Z_i$ are nonsingular surfaces,
$Y_i \xrightarrow{ \pi_i } Y_{i-1}$ is the blowing-up of $Y_{i-1}$ at a point $P_i \in Y_{i-1}$
and
$Z_i \xrightarrow{ \rho_i } Z_{i-1}$ is the blowing-up of $Z_{i-1}$ at a point $Q_i \in Z_{i-1}$.
Moreover, $Y_{i-1} \hookrightarrow Z_{n+i-1}$ maps $P_i$ on $Q_{n+i}$ (let us simply write $P_i=Q_{n+i}$).
Diagrams (I) and (II) are minimal in the sense that $n(f)=m$ and $n(g)=n$;
since $n(g\circ f) = n(f)+n(g)=m+n$, it follows that (III) is also minimal.

\eqref{70209ru29fyt2suddh29}
Let $D \in \Miss(g)$; then $D \in \Miss(g \circ f)$ by \ref{Doifqrqkkdsjf8}\eqref{qo8723yrd187ryqaj}.
Let $D^{Z_i} \subset Z_i$ be the strict transform of $D \subset Z_0$ on $Z_i$.
As $D^{Z_n} \subseteq Z_n \setminus Y_0$ and 
$\cent(\rho_{n+1} \circ \cdots \circ \rho_{n+m}) = \cent(\pi_1 \circ \cdots \circ \pi_{m}) \subset Y_0$,
we see that
\begin{equation}  \label {8923ejxbv6c3eyrf}
\setspec{ i }{ 1 \le i \le n+m,\ Q_i \in D^{Z_{i-1}} } = \setspec{ i }{ 1 \le i \le n,\ Q_i \in D^{Z_{i-1}} }.
\end{equation}
Since $n(g\circ f,D)$ (resp.~$n(g,D)$) is by definition the cardinality of the set in the 
lhs (resp.~rhs) of \eqref{8923ejxbv6c3eyrf},  we have $n(g\circ f,D) = n(g,D)$.

\eqref{lwoj837456djd93} Let $C \in \Miss(f) \setminus \Cont(g)$ and let $D$ be the closure of $g(C)$ in $Z$.
Then $D \in \Miss(g \circ f)$ by \ref{Doifqrqkkdsjf8}\eqref{qo8723yrd187ryqaj}.
Define $D^{Z_i} \subset Z_i$ as before, then
\begin{equation}  \label {g251e98fjc87tg}
\setspec{ i }{ n+1 \le i \le n+m,\ Q_i \in D^{Z_{i-1}} }
\subseteq \setspec{ i }{ 1 \le i \le n+m,\ Q_i \in D^{Z_{i-1}} }.
\end{equation}
Since $n(f,C)$ (resp.~$n(g\circ f,D)$) is the cardinality of the set in the 
lhs (resp.~rhs) of \eqref{g251e98fjc87tg}, 
we have $n(f,C) \le n(g\circ f, D)$, and moreover
\begin{equation}  \label {sjdhdg376t7856s}
n(f,C) \neq n(g\circ f, D)
\end{equation}
is equivalent to
\begin{equation}  \label {2w2ggft12fh2h12jiop}
\setspec{ i }{ 1 \le i \le n,\ Q_i \in D^{Z_{i-1}} } \neq \emptyset.
\end{equation}
By minimality of diagram (II) together with \eqref{1329jf9m39c37hf},
\eqref{2w2ggft12fh2h12jiop} is equivalent to
\begin{equation}  \label {987x766dv43h}
D \cap \cent(g) \neq \emptyset .
\end{equation}
Now
\begin{equation}  \label {sg72167263ru8q9kkrfj}
C \cap \exc(g) \neq \emptyset
\end{equation}
implies \eqref{987x766dv43h} and, if $g(C)=D$, the converse is true.
So we have shown that
\begin{equation}  \label {88r23hr26265215r}
n(f,C) = n(g\circ f, D) \implies C \cap \exc(g) = \emptyset,
\end{equation}
and that the converse holds whenever $g(C)=D$.
Finally, we observe that if $C \isom \aff^1$ then the dominant morphism
$C \xrightarrow{g} D$ is necessarily finite, hence surjective,
so the converse of \eqref{88r23hr26265215r} is true whenever $C \isom \aff^1$.
This proves (b).

To prove (c), define $\bar Q_i = ( \rho_1 \circ \cdots \circ \rho_{i-1})(Q_i) \in Z_0$ for all $i = 1, \dots, m+n$
and observe that the trivial equality (for any $P \in Z$)
$$
| \setspec{ i }{ \bar Q_i = P } |
= | \setspec{ i }{ i \le n \text{ and } \bar Q_i = P } | + | \setspec{ i }{ i > n \text{ and }  \bar Q_i = P } |
$$
is the desired equality.
\end{proof}

\section{Compositions of simple affine contractions}
\label {SECCompositionsofsimpleaffinecontractions}

Let $\bk$ be an algebraically closed field of arbitrary characteristic, and let $\aff^2=\aff^2_\bk$.
As in the introduction, we write $\BirA$ for the monoid of birational endomorphisms $f : \aff^2 \to \aff^2$,
and we declare that $f,g \in \Bir(\aff^2)$ are equivalent ($f\sim g$) if $u \circ f \circ v = g$ for
some $u,v \in \Aut(\aff^2)$.
The equivalence class of $f \in \BirA$ is denoted $[f]$.
Note that the conditions $f \sim f'$ and $g \sim g'$ do NOT imply that $f \circ g \sim f' \circ g'$. 

The aim of this section is to describe the subsets
$S_{\text{\rm w}} \supset S_{\text{\rm a}} \supset S_{\text{\rm aa}}$ of $\BirA$ defined by:
\begin{align*}
S_{\text{\rm w}} & = \setspec{ f \in \BirA }{ \text{$\Miss(f)$ is weakly admissible} }, \\
S_{\text{\rm a}}  & = \setspec{ f \in \BirA }{ \text{$\Miss(f)$ is admissible} }, \\
S_{\text{\rm aa}} & = \setspec{ f \in \BirA }{ \text{both $\Miss(f)$ and $\Cont(f)$ are admissible} }
\end{align*}
(refer to \ref{weaklyioufq923lksmdjjj}, \ref{weak-dpfq32748rjkdkd},
\ref{ioufq923lksmdjjj} and \ref{dpfq32748rjkdkd});
note that these sets are not closed under composition of morphisms.
We learn at a relatively early stage 
(see \ref{dkfjaiuer834134k1j3}\eqref{78238723fi111101})
that each element of $S_{\text{\rm w}}$ 
is a composition of simple affine contractions
(SACs are defined in the introduction and again in \ref{dfuqp923jwe;li}).
However, an arbitrary composition of SACs does not necessarily belong to 
$S_{\text{\rm w}}$ (resp.\  $S_{\text{\rm a}}$, $S_{\text{\rm aa}}$),
so in each of the three cases one has to determine which compositions of SACs give
the desired type of endomorphism.
The answer is given in Theorem~\ref{du-239412948ynf}, which is the main result of this section.

\medskip
The material of \ref{new-dkfqi398}--\ref{nouveau-dkfjaiuer834134k1j3}\eqref{8734yt9873y4reyw}
can be found in \cite{Dai:bir} and \cite{Dai:trees};
everything else appears to be new.

\medskip
As before, we have $\aff^2 = \Spec A$ where $A = \kk2$ is fixed throughout,
and by a coordinate system of $\aff^2$ we mean a pair $(X,Y) \in A\times A$ satisfying $A=\bk[X,Y]$
(see the introduction of Section~\ref{Sec:AdmissConfig}).

\begin{parag}  \label {new-dkfqi398} 
Let $\Cgoth$ temporarily denote the set of coordinate systems of $\aff^2$.
Given an element $\cgoth =(X,Y)$ of $\Cgoth$,
consider the $\bk$-homomorphism $A \to A$ given by $X \mapsto X$ and $Y \mapsto XY$;
this homomorphism determines a morphism $\aff^2 \to \aff^2$ which we denote $\alpha_\cgoth$;
clearly, $\alpha_\cgoth \in \Bir( \aff^2)$.
Note that if $\cgoth_1, \cgoth_2 \in \Cgoth$ then
$\alpha_{\cgoth_1}^m \sim \alpha_{\cgoth_2}^m$ for every $m \ge 1$.
So, for each $m \ge 1$, the equivalence class $[\alpha_\cgoth^m]$ of $\alpha_\cgoth^m$ is independent
of the choice of $\cgoth \in \Cgoth$.
\end{parag}

\begin{definition}    \label {dfuqp923jwe;li}
A birational morphism $\aff^2 \to \aff^2$
is called a {\it simple affine contraction (SAC)\/} if it is equivalent to $\alpha_\cgoth$
for some (hence for every) coordinate system $\cgoth$ of $\aff^2$.
\end{definition}

Note that if $f$ is a SAC and $\cgoth \in \Cgoth$  then $f \sim \alpha_\cgoth$, but $f^2$ need not be equivalent
to $\alpha_\cgoth^2$.

\medskip
For readers who like to identify $\aff^2$ with $\bk^2$, we note that $\alpha_\cgoth$ corresponds to 
the map $\bk^2 \to \bk^2$, $(x,y) \mapsto (x,xy)$,
and that the SACs are obtained by composing this map both sides with automorphisms.
See \ref{43fg6gxcfzx}.

\begin{lemma}  \label {dkjfaidsukkkdr333}
\mbox{\ }
\begin{enumerata}

\item  \label{BE4.10_LT4.1}
A birational morphism $f : \aff^2 \to \aff^2$ is a SAC
if and only if $n(f)=1$.

\item \label{djfoiqwuefkd6767676}
If $f : \aff^2 \to \aff^2$ is a SAC then $f$ has one missing  curve $L$
and one fundamental point $P$; moreover, $L$ is a coordinate line
and $P\in L$.

\item \label {88e8e8e8w8e8dv8g8e88} 
Let $L \subset \aff^2$ be a coordinate line and $P\in L$ a point.
Let $X \xrightarrow{\pi} \aff^2$ be the blowing-up of $\aff^2$ at $P$
and $U \subset X$ the complement of the strict transform of $L$.
Then $U \isom \aff^2$ and the composition
$\aff^2 \xrightarrow{ \isom } U \hookrightarrow X \xrightarrow{\pi} \aff^2$
is a SAC with missing curve $L$ and fundamental point $P$.

\item \label{djflahds3y476476354}
If $f,g : \aff^2 \to \aff^2$ are two SAC with the same missing curve
and  the same fundamental point then there exists an automorphism
$\theta : \aff^2 \to \aff^2$ such that $g = f \circ \theta$:
$$
\xymatrix{
{\aff^2} \ar[dr]_{g} \ar[rr]_{\theta}^{\isom} && {\aff^2} \ar[dl]^{f}  \\
& {\aff^2}
}
$$

\item  \label {BE4.11}
Let $\cgoth$ be a coordinate system of $\aff^2$ and $\alpha_\cgoth \in \BirA$ as in \ref{new-dkfqi398}.
For $f \in \BirA$, 
$$
q(f)=1 \iff c(f)=1 \iff f \sim \alpha_\cgoth^{n(f)} .
$$

\end{enumerata}
\end{lemma}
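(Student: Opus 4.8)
The plan is to pin down the precise blow-up model of a SAC, derive (c) and the numerical data from it, read off (a), (b) and (d), and treat (e) separately. First I would prove (c) by an explicit local computation. Since $L$ is a coordinate line I may pick a coordinate system $(X,Y)$ with $L=\bZ(X)$ and, after a translation, $P=(0,0)$. Blowing up $\aff^2$ at $P$, the chart in which the exceptional curve $E$ is cut out by $X$ is an affine plane with coordinates $(X,T)$; it is exactly $X\setminus\widetilde L$, where $\widetilde L$ is the strict transform of $L$, and $\pi$ restricts there to $(X,T)\mapsto(X,XT)$, i.e.\ to $\alpha_\cgoth$. Hence the composite of (c) is a SAC, and inspecting its image shows that its unique missing curve is $L$ and its unique fundamental point is $P$. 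In particular $\alpha_\cgoth$ factors through a single blow-up and is not an isomorphism, so $n(\alpha_\cgoth)=1$.

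For (a), the implication SAC~$\Rightarrow n(f)=1$ is then immediate, since $n$ is an equivalence invariant and $n(\alpha_\cgoth)=1$. For the converse, assume $n(f)=1$. By \ref{dijfaksdjfall00000} we get $q(f)=c(f)=1$, so $f$ has a single missing curve $L$ and, the center being a single point by \eqref{1329jf9m39c37hf}, a single fundamental point $P\in L$ (by \ref{dijfaksdjfall00000}\eqref{POdsifias2874wke}). As $L$ is the only missing curve, \ref{g64f4j3k21ld} shows $L$ is nonsingular, hence a coordinate line by \ref{938hdb67-gdhifa}\eqref{000004}. Now I would locate $\dom(f)$ inside the blow-up $Y_1$ of $\aff^2$ at $P$: since $L\cap f(\dom f)\subseteq\cent(f)=\{P\}$ by \ref{9238ch928nhf8n34r}\eqref{pfq903012cm0mjf-i}, one has $f^{-1}(L)=f^{-1}(P)=\exc(f)$; writing this as $\dom(f)\cap(\widetilde L\cup E)=\dom(f)\cap E$ forces $\dom(f)\cap\widetilde L$ to lie in the single point $\widetilde L\cap E$, hence (being open in the irreducible curve $\widetilde L$) empty. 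Since every curve of $Y_1$ other than $\widetilde L$ meets $\dom(f)$ (by \ref{9238ch928nhf8n34r}\eqref{023c093mrfh9129jf-ii} and $E\cap\dom(f)\neq\emptyset$), the complement of $\dom(f)$ is $\widetilde L$, so $\dom(f)=Y_1\setminus\widetilde L$ and $f=\pi|_{Y_1\setminus\widetilde L}$. This is exactly the model of (c), whence $f$ is a SAC. Part (b) is now just this analysis applied to a SAC, which has $n=1$ by what precedes.

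Part (d) is the uniqueness of that model: if $f,g$ are SACs with the same $L$ and $P$, then both equal $\pi\circ\iota$ for open immersions $\iota_f,\iota_g:\aff^2\xrightarrow{\isom}Y_1\setminus\widetilde L$, so $\theta=\iota_f^{-1}\circ\iota_g\in\Aut(\aff^2)$ satisfies $g=f\circ\theta$. In (e), the equivalence $q(f)=1\iff c(f)=1$ is immediate from $q(f)=c(f)$ (\ref{dijfaksdjfall00000}\eqref{000001}); and from $\alpha_\cgoth^{\,n}:(x,y)\mapsto(x,x^{n}y)$ one reads off that $\alpha_\cgoth^{\,n}$ (for $n\ge1$) has the single missing curve $\bZ(X)$, so, since $q$ and $n$ are $\sim$-invariant, $f\sim\alpha_\cgoth^{\,n(f)}$ with $f$ non-invertible gives $q(f)=1$.

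The remaining implication $q(f)=1\Rightarrow f\sim\alpha_\cgoth^{\,n(f)}$ is the main obstacle. From $c(f)=1$ there is a single fundamental point $P$ lying on the single, necessarily coordinate, missing curve $L$, and by \eqref{1329jf9m39c37hf} all centers of a minimal decomposition of $f$ lie over $P$; the task is to show this tower is equivalent to the linear tower resolving $\alpha_\cgoth^{\,n(f)}$. I would argue by induction on $n=n(f)$, the base case $n=1$ being (a). For the step I would factor $f=f'\circ s$ with $s$ a SAC and $n(f')=n-1$, chosen so that $\Miss(s)\subseteq\Cont(f')$; by \ref{djf9q823j;d8zxcmv}(a) this forces $q(f')=1$, so by induction $f'\sim\alpha_\cgoth^{\,n-1}$. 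The delicate point is to combine this with (d) and the containment $\Miss(s)\subseteq\Cont(f')$ to upgrade the two separate equivalences into $f\sim\alpha_\cgoth^{\,n}$ — recall that $\sim$ does \emph{not} pass through composition for free. This compatibility bookkeeping, matching the contracting curve of $\alpha_\cgoth^{\,n-1}$ with the missing curve of the peeled-off SAC, is the crux; it is carried out in \cite[4.11]{Dai:bir}, to which I would appeal for the details.
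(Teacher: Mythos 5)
Your proposal is correct, and it is substantially more self-contained than the paper's own proof, which handles the whole lemma by citation: part (a) is quoted from \cite[4.10]{Dai:bir} (or \cite[4.1]{Dai:trees}), parts (b)--(d) are declared trivial, and part (e) is \cite[4.11]{Dai:bir} combined with the equality $q(f)=c(f)$ of \ref{dijfaksdjfall00000}\eqref{000001}. What you do differently is to construct the blow-up model explicitly: the chart computation for (c), the localization of $\dom(f)$ inside the blow-up $Y_1$ for the converse direction of (a), and the resulting uniqueness statement (d) are precisely the details the paper omits, and you derive them using only the paper's internal results (\ref{dijfaksdjfall00000}, \ref{g64f4j3k21ld}, \ref{938hdb67-gdhifa}, \ref{9238ch928nhf8n34r}). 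For the one hard implication, $q(f)=1\Rightarrow f\sim\alpha_\cgoth^{\,n(f)}$ in (e), you ultimately appeal to the same reference \cite[4.11]{Dai:bir} that the paper itself cites, so there the two proofs coincide. Your route buys the reader an essentially complete argument; also, your caveat restricting to non-invertible $f$ in the easy direction of (e) is in fact needed, since an automorphism satisfies $f\sim\alpha_\cgoth^{\,0}=\id$ while $q(f)=0$.

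Two wrinkles, neither fatal. First, in the converse of (a), from ``every curve of $Y_1$ other than $\widetilde L$ meets $\dom(f)$'' you conclude $\dom(f)=Y_1\setminus\widetilde L$; \emph{a priori} the complement could also contain isolated points. Close this either by the standard fact that the complement of an affine open subset of a nonsingular surface is pure of codimension one, or by observing that $\dom(f)\hookrightarrow Y_1\setminus\widetilde L\isom\aff^2$ is an open immersion $\aff^2\to\aff^2$, hence an isomorphism by \ref{dijfaksdjfall00000}\eqref{000002}. Second, in the induction sketch for (e) the role of \ref{djf9q823j;d8zxcmv}(a) is inverted: one cannot ``choose'' $f=f'\circ s$ so that $\Miss(s)\subseteq\Cont(f')$ and then deduce $q(f')=1$ from that lemma. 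Rather, any factorization $f=f'\circ s$ with $s$ a SAC (one exists because $\Miss(f)$ is weakly admissible, so $f$ is a composition of SACs by \ref{dkfjaiuer834134k1j3}\eqref{78238723fi111101}) automatically has $q(f')=1$, since $\Miss(f')\subseteq\Miss(f)$ and $f'$ is not invertible; and then \ref{djf9q823j;d8zxcmv}(a) \emph{forces} $\Miss(s)\subseteq\Cont(f')$, since otherwise $q(f)>q(f')=1$. As you defer this step to \cite[4.11]{Dai:bir} in any case, this affects only the sketch, not the validity of what you actually prove.
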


\begin{proof}
Assertion~\eqref{BE4.10_LT4.1} is \cite[4.10]{Dai:bir} or \cite[4.1]{Dai:trees};
assertions \eqref{djfoiqwuefkd6767676}--\eqref{djflahds3y476476354}
are trivial; \eqref{BE4.11} is \cite[4.11]{Dai:bir} together with the
fact \ref{dijfaksdjfall00000}\eqref{000001} that  $q(f)=c(f)$.
\end{proof}

\begin{smallremark}
Assertion \ref{dkjfaidsukkkdr333}\eqref{BE4.11} can be phrased as follows:
given an integer $m \ge 1$ and a coordinate system $\cgoth$ of $\aff^2$, 
the equivalence class  $[\alpha_\cgoth^m]$ of $\alpha_\cgoth^m$ is:
\begin{align*}
[\alpha_\cgoth^m] &= \setspec{ f \in \BirA }{ q(f)=1 \text{ and } n(f)=m } \\
 &= \setspec{ f \in \BirA }{ c(f)=1 \text{ and } n(f)=m }.
\end{align*}
\end{smallremark}

\begin{corollary}  \label {uniquemissingcoord}
If $f \in \BirA$ has a unique missing curve $C$, then $C$ is a coordinate line.
\end{corollary}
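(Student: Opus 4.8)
The plan is to deduce this from two of the propositions established earlier in Section~\ref{SEC:BirEndos}, namely Proposition~\ref{g64f4j3k21ld} (a singular point of a missing curve of a birational morphism $\aff^2\to\aff^2$ lies on at least two missing curves) and Proposition~\ref{938hdb67-gdhifa}\eqref{000004} (a nonsingular missing curve is a coordinate line). The strategy is thus to reduce to nonsingularity of $C$ and then quote the second result.

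First I would show that $C$ is nonsingular. Suppose, for contradiction, that $C$ has a singular point $P$. Since $C$ is a missing curve of $f$, Proposition~\ref{g64f4j3k21ld} asserts that $P$ belongs to at least two distinct missing curves of $f$. But $C$ is by hypothesis the \emph{unique} missing curve of $f$, so $f$ cannot have two missing curves passing through $P$; this contradiction shows that $C$ has no singular point, i.e.\ $C$ is nonsingular. (Recall from the Conventions that all curves are irreducible and reduced, so $C$ is genuinely a single irreducible missing curve.)

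Having established that $C$ is a nonsingular missing curve of the birational morphism $f:\aff^2\to\aff^2$, I would conclude at once by Proposition~\ref{938hdb67-gdhifa}\eqref{000004} that $C$ is a coordinate line, which is the assertion.

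I expect no real obstacle: the corollary is essentially a one-line combination of the two cited propositions, both of which are stated exactly for the setting $f:\aff^2\to\aff^2$. If one wished to bypass Proposition~\ref{g64f4j3k21ld}, an alternative route would run through Lemma~\ref{dkjfaidsukkkdr333}\eqref{BE4.11}: from $q(f)=1$ one obtains $f\sim\alpha_\cgoth^{n(f)}$ for any coordinate system $\cgoth=(X,Y)$, whose sole missing curve is the coordinate line $\bZ(X)$, after which one tracks that pre- and post-composition by automorphisms of $\aff^2$ carries a coordinate line to a coordinate line. However, the singular-point argument above is shorter and more self-contained, so that is the one I would present.
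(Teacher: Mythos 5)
Your proof is correct and matches the paper: the paper's own proof cites Lemma~\ref{dkjfaidsukkkdr333}\eqref{BE4.11} as the primary route and explicitly notes the alternative deduction from \ref{938hdb67-gdhifa}\eqref{000004} and \ref{g64f4j3k21ld}, which is exactly the argument you develop. You have simply chosen to elaborate the second of the paper's two routes while sketching the first, so there is nothing to add.
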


\begin{proof}
This follows from \ref{dkjfaidsukkkdr333}\eqref{BE4.11}.
One can also deduce it from \ref{938hdb67-gdhifa}\eqref{000004} and \ref{g64f4j3k21ld}.
\end{proof}

See \ref{9f2039esdow9e} for the definition of the phrase ``$L$ is blown-up only once''.

\begin{lemma}  \label {nouveau-dkfjaiuer834134k1j3}
Let $f \in \BirA$.
\begin{enumerata}

\item \label {8734yt9873y4reyw}
Suppose that some missing curve $L$ of $f$ is blown-up only once.
Then $L$ is a coordinate line.  Moreover, if $P \in L$ is the unique fundamental point of $f$ which is on $L$ and
$\gamma \in \BirA$ is a SAC with missing curve $L$ and fundamental point $P$, then $f$ factors as 
$\aff^2 \xrightarrow{f'} \aff^2 \xrightarrow{\gamma} \aff^2$ with $f' \in \BirA$.

\item \label {hd819t5fa3s5erh}
Suppose that $f$ factors as 
$\aff^2 \xrightarrow{f'} \aff^2 \xrightarrow{\gamma} \aff^2$ with $f',\gamma \in \BirA$,
where $\gamma$ is a SAC. Let $L$ be the missing curve of $\gamma$.
Then $L$ is a missing curve of $f$ which is blown-up only once.

\end{enumerata}
\end{lemma}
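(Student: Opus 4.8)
The plan is to handle (a) in two stages---first that $L$ is a coordinate line, then the factorization through $\gamma$---and to deduce (b) almost immediately from Lemma~\ref{90f20rdw9019jds}. Throughout I fix a minimal diagram~\eqref{mindec} for $f$ with the notation of~\ref{diofj;askdjf}.

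For the first assertion of (a), note that since $L$ is blown up only once, the remarks following Definition~\ref{9f2039esdow9e} give a \emph{unique} fundamental point $P$ of $f$ lying on $L$. I would first show $\mu(P,L)=1$ by reusing the matrix apparatus from the proof of Proposition~\ref{g64f4j3k21ld}. Label $L=C_1$ and arrange the diagram so that $P_1=P$; the hypothesis $n(f,L)=1$ means $P_i\notin C_1^{Y_{i-1}}$ for every $i\ge2$, so the $C_1$-column of the multiplicity matrix $\mu_\Deul$ has $\mu(P,L)$ as its only nonzero entry (in the first row). Hence the corresponding column of $\epsilon'_\Deul\mu_\Deul$ equals $\mu(P,L)$ times the first column of $\epsilon'_\Deul$, which is an integer vector; since $\det(\epsilon'_\Deul\mu_\Deul)=\pm1$ by \cite[4.3(b)]{Dai:bir}, the positive integer $\mu(P,L)$ divides $\pm1$, forcing $\mu(P,L)=1$. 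Thus $L$ is smooth at $P$, and any other singular point of $L$ would be a fundamental point of $f$ on $L$ by \ref{dijfaksdjfall00000}\eqref{000006}, contradicting the uniqueness of $P$; so $L$ is smooth and is a coordinate line by \ref{938hdb67-gdhifa}\eqref{000004}.

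For the factorization in (a), I would use \ref{dkjfaidsukkkdr333}\eqref{88e8e8e8w8e8dv8g8e88} to realize one SAC $\gamma_0$ with missing curve $L$ and fundamental point $P$ as $\aff^2\isom U'\hookrightarrow Y_1\xrightarrow{\pi_1}\aff^2$, where $\pi_1:Y_1\to\aff^2$ is the blowing-up at $P$ and $U'=Y_1\setminus L^{Y_1}$. Keeping the minimal diagram of $f$ with $P_1=P$, the condition $n(f,L)=1$ lets me prove by induction on $i$ that the total transform of $L^{Y_1}$ on $Y_i$ equals its strict transform $L^{Y_i}$: at each step $P_i\notin L^{Y_{i-1}}$, so no exceptional component is ever attached. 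Consequently $(\pi_2\circ\cdots\circ\pi_n)^{-1}(L^{Y_1})=L^{Y_n}$, which is disjoint from $X$ (otherwise a curve in $X$ would dominate $L$, contradicting $L\in\Miss(f)$ via \ref{9238ch928nhf8n34r}\eqref{023c093mrfh9129jf-ii}). Hence $(\pi_2\circ\cdots\circ\pi_n)|_X$ maps $X$ into $U'$ and defines a birational morphism $f':\aff^2\to\aff^2$ with $f=\gamma_0\circ f'$. Finally, since the given $\gamma$ and $\gamma_0$ share the missing curve $L$ and fundamental point $P$, \ref{dkjfaidsukkkdr333}\eqref{djflahds3y476476354} yields an automorphism $\theta$ with $\gamma_0=\gamma\circ\theta$, so $f=\gamma\circ(\theta\circ f')$ is the desired factorization through $\gamma$.

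Part (b) is short. Applying \ref{90f20rdw9019jds}\eqref{70209ru29fyt2suddh29} to $\aff^2\xrightarrow{f'}\aff^2\xrightarrow{\gamma}\aff^2$ (its hypothesis holds since all three surfaces are $\aff^2$) with $D=L\in\Miss(\gamma)$ gives $L\in\Miss(\gamma\circ f')=\Miss(f)$ and $n(f,L)=n(\gamma,L)$. Since $\gamma$ is a SAC we have $n(\gamma)=1$ by \ref{dkjfaidsukkkdr333}\eqref{BE4.10_LT4.1}, while its unique missing curve is blown up at least once, so $n(\gamma,L)=1$ and therefore $n(f,L)=1$.

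I expect the factorization in (a) to be the main obstacle: the delicate point is verifying that the higher blowing-ups of $f$ genuinely occur away from $L^{Y_1}$---that is, the induction identifying total transform with strict transform, together with the disjointness $L^{Y_n}\cap X=\emptyset$. Matching the matrix conventions of \cite{Dai:bir} in the smoothness argument also requires care, though the divisibility observation $\mu(P,L)\mid\det(\epsilon'_\Deul\mu_\Deul)$ should streamline it considerably.
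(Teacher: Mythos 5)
Your proposal is correct, and two of its three components coincide with the paper's own argument: for the factorization in (a) you, like the paper, take a minimal diagram with $P_1=P$, remove the strict transform $L^{Y_1}$ from $Y_1$ to get an open set isomorphic to $\aff^2$, check that the image of $\aff^2\hookrightarrow Y_n\to Y_1$ avoids $L^{Y_1}$, and then invoke \ref{dkjfaidsukkkdr333}\eqref{88e8e8e8w8e8dv8g8e88} and \ref{dkjfaidsukkkdr333}\eqref{djflahds3y476476354}; and part (b) is the same one-line application of \ref{90f20rdw9019jds}\eqref{70209ru29fyt2suddh29}. (The induction you spell out --- total transform of $L^{Y_1}$ equals strict transform because no $P_i$ with $i\ge 2$ lies on $L^{Y_{i-1}}$, and $L^{Y_n}\cap X=\emptyset$ because $L$ is missing --- is exactly what the paper compresses into one sentence.) Where you genuinely diverge is the first assertion of (a): the paper does not reprove smoothness of $L$, but cites the proof of \cite[4.6]{Dai:trees} for the fact that $P$ is a nonsingular point of $L$ and then \cite[4.6]{Dai:bir} to conclude $L$ is a coordinate line. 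You instead extract $\mu(P,L)=1$ from the determinant identity $\det(\epsilon'_\Deul\mu_\Deul)=\pm1$ of \cite[4.3(b)]{Dai:bir}: since $n(f,L)=1$, the column of $\mu_\Deul$ indexed by $L$ has $\mu(P,L)$ as its only nonzero entry, so that column of $\epsilon'_\Deul\mu_\Deul$ is $\mu(P,L)$ times an integer vector and $\mu(P,L)$ divides $\pm1$; combined with \ref{dijfaksdjfall00000}\eqref{000006} (any other singular point of $L$ would be a second fundamental point on $L$) and \ref{938hdb67-gdhifa}\eqref{000004}, this gives the coordinate-line claim. Your variant is in fact simpler than the paper's own use of this machinery in \ref{g64f4j3k21ld} (a single sparse column rather than a block decomposition), and what it buys is self-containedness: the reader needs only the matrix apparatus of \cite[2.7, 2.8, 4.3(b)]{Dai:bir}, which this paper already deploys, instead of consulting the proof of an external result in \cite{Dai:trees}. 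The paper's route is shorter on the page but opaque without that outside reference.
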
  

\begin{proof}
\eqref{8734yt9873y4reyw} is an improvement of \cite[4.6]{Dai:trees}.
The proof of \cite[4.6]{Dai:trees} shows that $P$ is a nonsingular point of $L$;
then \cite[4.6]{Dai:bir} implies that $L$ is a coordinate line.
Choose a diagram~\eqref{mindec} for $f$ such that $n=n(f)$ and $P_1 = P$.
Let $L^{Y_1} \subset Y_1$ denote the strict transform of $L$ on $Y_1$
and define $W = Y_1 \setminus L^{Y_1} \subset Y_1$.
As $L$ is a missing curve of $f$ and is blown-up only once, the image of
$\aff^2 \hookrightarrow Y_n \xrightarrow{\pi_2 \circ \cdots \circ \pi_n} Y_1$ is included in $W$;
so $f$ factors as $\aff^2 \xrightarrow{g'} W \xrightarrow{h'} \aff^2$ where $g',h'$ are birational morphisms
and $h'$ is the composition $W \hookrightarrow Y_1 \xrightarrow{\pi_1} Y_0 = \aff^2$.
By \ref{dkjfaidsukkkdr333}\eqref{88e8e8e8w8e8dv8g8e88},
$W \isom \aff^2$ and the composition $\aff^2 \isom W \xrightarrow{h'} \aff^2$
is a SAC with missing curve $L$ and fundamental point $P$;
so $f$ factors as $\aff^2 \xrightarrow{g} \aff^2 \xrightarrow{h} \aff^2$ where $g,h \in \BirA$
and $h$ is a SAC with missing curve $L$ and fundamental point $P$.
By \ref{dkjfaidsukkkdr333}\eqref{djflahds3y476476354},
$h = \gamma \circ \theta$ for some $\theta \in \AutA$.
Then $f =  \gamma \circ \theta \circ g$ and we are done.

\eqref{hd819t5fa3s5erh}
By \ref{90f20rdw9019jds}\eqref{70209ru29fyt2suddh29}, $L \in \Miss(f)$ and 
$n(f,L) = n(\gamma \circ f', L) = n(\gamma, L) = 1$.
\end{proof}

\begin{proposition}  \label {dkfjaiuer834134k1j3}
Let $f : \aff^2 \to \aff^2$ be a birational morphism such that:
\begin{itemize}

\item[(i)] $f$ is not an isomorphism;

\item[(ii)] there exists a coordinate system of $\aff^2$ with respect to which
all missing curves of $f$ have degree $1$.

\end{itemize}
Then there exists a missing curve of $f$ which is blown-up only once.
Moreover, if $L$ is such a curve
and  $P \in L$ is the unique fundamental point of $f$ which is on $L$,
then the following hold.
\begin{enumerata}
\setlength{\itemsep}{1mm}

\item There exists a coordinate system $(X,Y)$ of $\aff^2$
such that $L=\bZ(X)$ and $P = (0,0)$,
and such that the union of the missing curves of $f$ is equal to the zero-set of
one of the following polynomials in $\bk[X,Y]$:
\begin{enumerata}
\setlength{\itemsep}{1mm}

\item $XY^m \prod_{i=1}^n (X-a_i)$, for some $m \in \{0,1\}$, $n\ge0$
and $a_1, \dots, a_n \in \bk$;

\item $X(X-1)^m \prod_{i=1}^n (Y-b_iX)$, for some $m \in \{0,1\}$, $n\ge0$
and $b_1, \dots, b_n \in \bk$.

\end{enumerata}

\item If $\gamma : \aff^2 \to \aff^2$ is a SAC with missing curve $L$ and fundamental
point $P$, then $f$ factors as $\aff^2 \xrightarrow{f'} \aff^2 \xrightarrow{\gamma} \aff^2$
where $f'$ is a birational morphism such that $\Miss(f')$ is admissible.

\item \label {78238723fi111101} $f$ is a composition of SACs.

\end{enumerata}
\end{proposition}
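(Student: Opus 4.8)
The plan is to run an induction on $n(f)$, after reducing the hypothesis to a usable geometric form and isolating the one genuinely hard ingredient. By \ref{weak-dpfq32748rjkdkd}, condition (ii) says exactly that $\Miss(f)$ is weakly admissible, so in suitable coordinates every missing curve is a line of degree $1$. The crucial first step, and the one I expect to be the main obstacle, is to produce a missing curve that is blown up only once. I would attack this through the minimal decomposition \eqref{mindec} of \ref{diofj;askdjf} with $n=n(f)$: the last exceptional curve is a $(-1)$-curve, hence a contracting curve by \eqref{d023876g77f}, and by tracing the self-intersection/proximity data of the blow-up tower over each fundamental point one aims to show that some missing line $L$ is met by a centre only at the stage $Y_0\to Y_1$, i.e.\ $n(f,L)=1$. (A possible alternative is the identity $\det(\epsilon'_\Deul\mu_\Deul)=\pm1$ from the proof of \ref{g64f4j3k21ld}, forcing some column of $\mu_\Deul$ to record a single blow-up.) Granting existence, fix such an $L$ and let $P\in L$ be the unique fundamental point of $f$ on $L$ (unique because $n(f,L)=1$, by the remark following \ref{9f2039esdow9e}).

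Next I would pin down the global shape of $\Miss(f)$. If $C\neq L$ is another missing curve meeting $L$, then the two distinct lines meet in one point $R$, which is a common point of two missing curves and hence a fundamental point by \ref{dijfaksdjfall00000}\eqref{000006}; since $P$ is the only fundamental point on $L$, $R=P$. Thus every missing curve other than $L$ either passes through $P$ or is parallel to $L$, so with $L=\bZ(X)$ and $P=(0,0)$ we get $\Miss(f)=\{L\}\sqcup\Keul\sqcup\Peul$, where $\Keul=\{\bZ(Y-b_iX)\}$ are the lines through $P$ distinct from $L$ and $\Peul=\{\bZ(X-a_j)\}$ the lines parallel to $L$. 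By \ref{nouveau-dkfjaiuer834134k1j3}\eqref{8734yt9873y4reyw}, $L$ is a coordinate line and $f$ factors as $\aff^2\xrightarrow{f'}\aff^2\xrightarrow{\gamma}\aff^2$ with $\gamma$ the SAC $(x,y)\mapsto(x,xy)$ of missing curve $L$ and fundamental point $P$. Using \ref{Doifqrqkkdsjf8} and \ref{90f20rdw9019jds} I would identify $\Miss(f')$ by pulling the members of $\Keul\cup\Peul$ back through $\gamma$: a line $\bZ(Y-b_iX)$ of $\Keul$ pulls back to the horizontal line $\bZ(y-b_i)$ and a line $\bZ(X-a_j)$ of $\Peul$ to the vertical line $\bZ(x-a_j)$; the pullbacks are again degree-$1$ lines precisely because these curves pass through $P$ or are parallel to $L$. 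Hence $\Miss(f')$ is the grid of $|\Keul|$ horizontal and $|\Peul|$ vertical lines, and in particular is weakly admissible.

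The dichotomy of part (a), equivalently the admissibility of $\Miss(f')$ needed for (b), is where the existence step pays off a second time. The graph of this grid is complete bipartite on $|\Keul|$ and $|\Peul|$ vertices, a forest iff $|\Keul|\le1$ or $|\Peul|\le1$. If instead $|\Keul|\ge2$ and $|\Peul|\ge2$, then in the grid every line meets at least two perpendicular lines, so by \ref{dijfaksdjfall00000}\eqref{000006} every missing curve of $f'$ carries at least two fundamental points; but a curve blown up only once carries a unique one, so no missing curve of $f'$ is blown up only once, contradicting the existence step applied to $f'$ (which is not an isomorphism, as $q(f')\ge4$). Therefore $|\Keul|\le1$ or $|\Peul|\le1$. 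After a linear change of coordinates fixing $L$ and $P$ (a shear sending the single line of $\Keul$ to $\bZ(Y)$, resp.\ a scaling sending the single line of $\Peul$ to $\bZ(X-1)$), the first case gives $\bigcup\Miss(f)=\bZ\!\big(XY^{|\Keul|}\prod_j(X-a_j)\big)$, which is case (a)(i), and the second gives $\bigcup\Miss(f)=\bZ\!\big(X(X-1)^{|\Peul|}\prod_i(Y-b_iX)\big)$, which is case (a)(ii); this proves (a). Moreover $\Miss(f')$ is then a grid with one side of size $\le1$, hence admissible by \ref{dpfq32748rjkdkd}, which proves (b).

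Finally, (c) follows by the same induction and needs no dichotomy. If $n(f)=1$ then $f$ is a SAC by \ref{dkjfaidsukkkdr333}\eqref{BE4.10_LT4.1}. If $n(f)\ge2$, then in $f=\gamma\circ f'$ we have $n(f')=n(f)-1$ by \ref{1dijfaksdjfall00100}, and $f'$ is either an automorphism or again satisfies the hypotheses of the proposition, since its missing curves form a weakly admissible configuration as shown above; in either case the induction hypothesis makes $f'$ a composition of SACs, whence so is $f=\gamma\circ f'$. The only place where real work is required is the existence of a missing curve blown up only once; once that is secured, the structural description, the factorization through $\gamma$, and the grid/pullback computation assemble the three conclusions.
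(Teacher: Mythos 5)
The genuine gap is the existence step, which you explicitly defer (``Granting existence, fix such an $L$\dots''). That step is not a technicality to be filled in later: it is the whole content of the proposition, and indeed the paper does not prove it from scratch either --- it invokes result 4.7 of \cite{Dai:trees} (a non-invertible $f$ satisfying (ii) factors through a SAC) and converts that into the existence of a once-blown-up missing curve via \ref{nouveau-dkfjaiuer834134k1j3}\eqref{hd819t5fa3s5erh}. Moreover, neither of your two sketched routes can succeed as described, because neither makes essential use of hypothesis (ii), i.e.\ of the \emph{simultaneous} rectifiability of the missing curves. By Remark \ref{ClearUpConfusion} (Russell's example), there is an irreducible $f \in \BirA$ which is not a SAC and all of whose missing curves are coordinate lines; by \ref{nouveau-dkfjaiuer834134k1j3}\eqref{8734yt9873y4reyw}, such an $f$ can have no missing curve blown up only once (otherwise it would factor through a SAC, contradicting irreducibility). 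Hence any argument relying only on data available for an arbitrary birational endomorphism --- the minimal decomposition, proximity/self-intersection bookkeeping, or the identity $\det(\epsilon'_\Deul\mu_\Deul)=\pm1$ from the proof of \ref{g64f4j3k21ld} --- would prove a false statement. A correct proof must exploit the common coordinate system of hypothesis (ii), and that idea is exactly what is absent from your sketch.

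Everything after the deferred step is sound and is essentially the paper's own argument: factor $f$ through the SAC $(x,y)\mapsto(x,xy)$ using \ref{nouveau-dkfjaiuer834134k1j3}\eqref{8734yt9873y4reyw}, show the missing curves of $f'$ are horizontal and vertical lines, and kill the case of at least two lines in each direction exactly as you do (two missing curves through a point force a fundamental point by \ref{dijfaksdjfall00000}\eqref{000006}, so every missing curve of $f'$ would be blown up at least twice, contradicting the existence statement applied to $f'$); part (c) then follows by induction on $n(f)$. Two minor inaccuracies: your ``grid'' description of $\Miss(f')$ can miss the line $\bZ(X)$ itself, whose image under $(x,y)\mapsto(x,xy)$ is the point $P$ rather than a member of $\Peul$ (e.g.\ $f(x,y)=(x,x^2y)$ has $\Miss(f')=\{\bZ(X)\}$ while $\Keul=\Peul=\emptyset$), though this does not disturb the dichotomy; and your argument establishes (b) only for that particular SAC, the case of an arbitrary $\gamma$ with missing curve $L$ and fundamental point $P$ requiring the extra observation \ref{dkjfaidsukkkdr333}\eqref{djflahds3y476476354}, as in the paper.
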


\begin{proof} 
By \cite[4.7]{Dai:trees}, $f = h \circ g$ where $g,h \in \BirA$ and $h$ is a SAC;
then \ref{nouveau-dkfjaiuer834134k1j3}\eqref{hd819t5fa3s5erh}
implies that some missing curve of $f$ is blown-up only once.

Let $L$ be a missing curve of $f$ which is blown-up only once,
and let $P \in L$ be the unique fundamental point of $f$ which is on $L$.
Choose a coordinate system $(X,Y)$ of $\aff^2$
such that $L=\bZ(X)$ and $P = (0,0)$,
and with respect to which all missing curves of $f$ have degree $1$.
Define $\gamma_0 : \aff^2 \to \aff^2$ by  $\gamma_0(x,y) = (x,xy)$.
As $\gamma_0$ is a SAC with missing curve $L$ and fundamental point $P$,
\ref{nouveau-dkfjaiuer834134k1j3}\eqref{8734yt9873y4reyw}
implies that $f$ factors as
$\aff^2 \xrightarrow{f'} \aff^2 \xrightarrow{\gamma_0} \aff^2$, for some $f' \in \BirA$.
Let $\Gamma$ (resp.\ $\Gamma'$) be the union of the missing curves of $f$ (resp.\ of $f'$).
Then
\begin{equation}  \label{difu-q923ooefk0s3373773}
\Gamma = \bZ(X) \cup \overline{ \gamma_0( \Gamma' ) } 
\end{equation}
by \ref{Doifqrqkkdsjf8}.  In particular,
if $C$ is a missing curve of $f'$ then $\gamma_0(C)$ is included into
a line of degree $1$; from $\gamma_0(x,y)=(x,xy)$, it easily follows that
$C$ is either a vertical line $\bZ(X-a)$ (for some $a \in \bk$),
a horizontal line $\bZ(Y-b)$ (for some $b \in \bk$),
or a hyperbola $\bZ(X(\alpha  + \beta Y)-1)$ (for some $\alpha,\beta \in \bk$, $\beta\neq0$),
where in fact the last case cannot occur because $C$ has one place at infinity by
\ref{dijfaksdjfall00000}\eqref{000003}.  So
\begin{equation}  \label{66222661626263t56}
\textit{each missing curve of $f'$ is either a vertical line or a horizontal line.}
\end{equation}
In particular, all missing curves of $f'$ have degree~$1$.  It follows from the first part
of the proof that
\begin{equation}   \label{dfrq293423789374-0349fkjodf}
\textit{if $f'$ is not an isomorphism, some missing curve of $f'$
is blown-up only once.}
\end{equation}
Let $h$ (resp.\ $v$) be the number of missing curves of $f'$ which are horizontal
(resp.\ vertical) lines.  Then $\min(h,v) \le 1$, otherwise
(by \ref{dijfaksdjfall00000}\eqref{000006})
every missing curve of $f'$ would contain at least two fundamental points of $f'$,
and hence would be blown-up at least twice, contradicting \eqref{dfrq293423789374-0349fkjodf}.
Statement \eqref{66222661626263t56} together with $\min(h,v) \le 1$ imply
that $\Miss(f')$ is admissible, which proves the special case ``$\gamma=\gamma_0$'' 
of assertion~(b); in view of \ref{dkjfaidsukkkdr333}\eqref{djflahds3y476476354},
it follows that (b) is true.

If $h\le 1$ then $\Gamma'$ is the zero-set of
$(Y-b)^h \prod_{i=1}^v (X-a_i)$, for some $b \in \bk$ and $a_1, \dots, a_v \in \bk$.
Then, by~\eqref{difu-q923ooefk0s3373773}, $\Gamma$ is the zero-set of
$X(Y-bX)^h \prod_{i=1}^v (X-a_i)$.
Replacing the coordinate system $(X,Y)$ by $(X,Y-bX)$, we see that (a-i) is satisfied.

If $v \le 1$ then $\Gamma'$ is the zero-set of
$(X-a)^v \prod_{i=1}^h (Y-b_i)$, for some $a \in \bk$ and $b_1, \dots, b_h \in \bk$.
Then $\Gamma$ is the zero-set of $X(X-a)^v \prod_{i=1}^h (Y-b_iX)$.
If $a=0$ or $v=0$ then $\Gamma$ is the zero-set of $X(X-1)^0 \prod_{i=1}^h (Y-b_iX)$, 
so (a-ii) holds;
if $a \neq 0$ and $v \neq 0$ then $\Gamma$ is the zero-set of 
$X(X-a) \prod_{i=1}^h (Y-b_iX)$, so (a-ii) holds after replacing $(X,Y)$ by $(a^{-1}X,Y)$.

So assertion~(a) is true.
To prove assertion~(c), consider the factorization $f = \gamma \circ f'$ given by (b).
Since  $n(\gamma)=1$ by \ref{dkjfaidsukkkdr333}\eqref{BE4.10_LT4.1},
we have $n(f') = n(f)-1$ by \ref{1dijfaksdjfall00100}.
Moreover, the fact that $\Miss(f')$ is admissible implies, by \ref{dpfq32748rjkdkd},
that there exists a coordinate system of $\aff^2$ with respect to which
all missing curves of $f'$ have degree $1$.
It is clear that (c) follows by induction on $n(f)$.
\end{proof}

\begin{remark}  \label {ClearUpConfusion}
We stress that assumption~(ii) of \ref{dkfjaiuer834134k1j3}
is strictly stronger than ``all missing curves are coordinate lines''.
Indeed, there exists an irreducible element $f \in \BirA$ with three missing curves,
these being the lines $\bZ(X+Y)$ and $\bZ(X-Y)$ and the parabola $\bZ(Y-X^2)$:
\begin{equation*}
\setlength{\unitlength}{1mm}
\aff^2 \xrightarrow{\ \ f\ \ } \aff^2 = 
\raisebox{-8\unitlength}{\fbox{\begin{picture}(30,20)(-15,-5)
\put(0,0){\circle*{1}}
\put(6.7,6.7){\circle*{1}}
\put(-6.6,6.6){\circle*{1}}
\put(-5,-5){\line(1,1){20}}
\put(-15,15){\line(1,-1){20}}
\qbezier(0,0)(-5,0)(-10,15)
\qbezier(0,0)(5,0)(10,15)
\end{picture}}}
\end{equation*}
(an example of such an $f$, due to Russell, appeared in \cite[4.7]{Dai:bir}).
Here, each missing curve is a coordinate line, and hence has degree $1$ with respect to a suitable choice of
coordinate system. However, these three lines are not {\it simultaneously\/} rectifiable, so $f$ does not satisfy
assumption~(ii) of \ref{dkfjaiuer834134k1j3} (it does not satisfy the conclusion either: 
since $f$ is not a SAC and is irreducible, it is not a composition of SACs).

Also note that, by \ref{weak-dpfq32748rjkdkd}, assumption~(ii) of \ref{dkfjaiuer834134k1j3} is equivalent to 
``$\Miss(f)$ is weakly admissible''.
\end{remark}

\begin{parag} \label {6f78gt6h5yghjd}
From now-on, and until the end of section~\ref{SECCompositionsofsimpleaffinecontractions},

\begin{center}
we fix a coordinate system $\cgoth=(X,Y)$ of $\aff^2$.
\end{center}

This allows us to identify $\aff^2$ with $\bk^2$.
See \ref{43fg6gxcfzx} for the notation $\bZ(F)$ and for our convention
regarding the definition of morphisms using coordinates.
\end{parag}

\begin{parag}  \label {27384ygfq7239j}
In \ref{dkfjq9384021awe6446}--\ref{2376123y234ygd76} below, we define
three submonoids of $\BirA$, denoted $\bH_\cgoth$, $\bG_\cgoth$ and $\bV_\cgoth$, respectively.
The subscript $\cgoth$ reminds us that these sets depend on the choice of $\cgoth$ made in \ref{6f78gt6h5yghjd}.
Since $\cgoth$ is fixed until the end of this section,
there is no harm in omitting it and writing simply $\bH$, $\bG$, and $\bV$.
It is clear from the definitions below that these three monoids are included in the submonoid of $\BirA$ 
generated by SACs and automorphisms.
\end{parag}

\begin{subparag}  \label {dkfjq9384021awe6446}
Given $m\in\Nat$ and $p \in \bk[Y]$ such that\footnote{We adopt the
convention that the zero polynomial has
degree $-\infty$; consequently, the condition $\deg p < 0$ is
equivalent to $p$ being the zero polynomial
(so $h_{0,p}=h_{0,0}$ is the identity map).}
$\deg p < m$, define $h_{m,p} \in \BirA$ 
by $h_{m,p}(x,y) = (xy^m+p(y), y)$.
Observe that $h_{m,p}$ is equivalent to $\gamma^m$, where $\gamma$ is the
SAC given by $(x,y) \mapsto (xy,y)$; consequently, 
$n(h_{m,p})= n( \gamma^m ) = m n(\gamma)$, i.e.,
$$
n(h_{m,p})= m .
$$
Define $\bH = \bH_\cgoth = \setspec{ h_{m,p} }{\text{$m \in \Nat$, $p \in \bk[Y]$ and $\deg p < m$} }$.
It is easily verified that $\bH$ is a submonoid of $\BirA$.
\end{subparag}

\begin{subparag}  \label{difuqp934913kmq34}
Let $\bM$ be the multiplicative monoid whose elements are
the $2\times 2$ matrices
$M=\left( \begin{smallmatrix} i & j \\ k & \ell \end{smallmatrix} \right)$
with $i,j,k,\ell \in \Nat$ and $i\ell - jk=\pm1$.
It is easily verified that $\bM$ is generated by 
$\left\{
\left( \begin{smallmatrix} 1 & 1 \\ 0 & 1 \end{smallmatrix} \right),
\left( \begin{smallmatrix} 0 & 1 \\ 1 & 0 \end{smallmatrix} \right)
\right\}$.
Given
$M=\left( \begin{smallmatrix} i & j \\ k & \ell \end{smallmatrix} \right)
\in \bM$, define the morphism $\gamma_M : \aff^2 \to \aff^2$ by
$(x,y) \mapsto (x^iy^j, x^ky^\ell)$.
Note that $\gamma_{M_1} \circ \gamma_{M_2} = \gamma_{M_1M_2}$
for all $M_1, M_2 \in \bM$, so the set
$$
\bG = \bG_\cgoth =\setspec{ \gamma_M }{ M \in \bM }
$$
is a monoid (under composition) generated by
$\left\{
\gamma_{ \left( \begin{smallmatrix} 1 & 1 \\ 0 & 1 \end{smallmatrix} \right) },
\gamma_{ \left( \begin{smallmatrix} 0 & 1 \\ 1 & 0 \end{smallmatrix} \right) }
\right\}$.
As $\gamma_{\left(\begin{smallmatrix} 1 & 1 \\ 0 & 1 \end{smallmatrix}\right)}$
is a SAC and
$\gamma_{ \left( \begin{smallmatrix} 0 & 1 \\ 1 & 0 \end{smallmatrix} \right)}$
is an automorphism, it follows that $\bG$ is a submonoid of $\BirA$.
\end{subparag}

\begin{subparag} \label{2376123y234ygd76}
Given a polynomial $\phi \in \bk[X]\setminus \{0\}$,
define $v_\phi \in \BirA$ by $v_\phi(x,y) = (x, \phi(x)y)$.
Then let 
$$
\bV = \bV_\cgoth = \setspec{ v_\phi }{ \phi \in  \bk[X]\setminus \{0\} } .
$$
Note that $v_\phi \circ v_\psi = v_{\phi \cdot \psi} = v_\psi \circ v_\phi$
for any $\phi,\psi \in \bk[X] \setminus \{0\}$, so $\bV$ is a submonoid of $\BirA$.  
\end{subparag}

\begin{lemma}  \label{dfpiwer;lkasd}
For a birational morphism $f : \aff^2 \to \aff^2$, the following are
equivalent:
\begin{enumerata}

\item The union of the missing curves of $f$ is included in $\bZ(Y)$;

\item there exists $(h,\theta) \in \bH \times \Aut(\aff^2)$
such that $f=h \circ \theta$:

\end{enumerata}
$$
\xymatrix{
{\aff^2} \ar[r]_{ \theta } \ar @/^1pc/ [rr]^{f}  
&   {\aff^2}  \ar[r]_{ h }  &   {\aff^2}
} 
$$
Moreover, if these conditions are satisfied then the pair
$(h,\theta)$ in \mbox{\rm (b)} is unique.
\end{lemma}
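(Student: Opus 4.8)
The plan is to prove the two implications separately and then establish uniqueness, working throughout with the fixed identification of $\aff^2$ with $\bk^2$ via $\cgoth = (X,Y)$, so that $\bZ(Y)$ is the line ``$y=0$''.

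First I would dispose of the easy implication (b)$\Rightarrow$(a). If $f = h \circ \theta$ with $h = h_{m,p} \in \bH$ and $\theta \in \Aut(\aff^2)$, then since $\theta$ is surjective we have $f(\aff^2) = h(\aff^2)$, so $f$ and $h$ have the same image and hence the same missing curves: $\Miss(f) = \Miss(h)$. A direct inspection of $h_{m,p}(x,y) = (xy^m + p(y), y)$ shows that its image is $\{(u,v) : v \neq 0\} \cup \{(p(0),0)\}$, whose only curve of finite intersection is $\bZ(Y)$; thus $\Miss(h) \subseteq \{\bZ(Y)\}$ and the union of the missing curves of $f$ is contained in $\bZ(Y)$.

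For (a)$\Rightarrow$(b) I would induct on $n(f)$. If $n(f)=0$ then $f$ is an automorphism by \ref{dijfaksdjfall00000}\eqref{000002}, and $f = h_{0,0} \circ f$ is the required factorization. If $n(f) \ge 1$, then $f$ is not an isomorphism and its missing curves, being irreducible curves contained in $\bZ(Y)$, all coincide with the single curve $L = \bZ(Y)$; in particular hypotheses (i),(ii) of \ref{dkfjaiuer834134k1j3} hold (with the fixed coordinate system $\cgoth$). By \ref{dkfjaiuer834134k1j3}, $L$ is blown up only once, there is a unique fundamental point $P = (a,0) \in L$, and choosing the SAC $\gamma = h_{1,a}$ (which by direct computation has missing curve $L$ and fundamental point $(a,0)$, and lies in $\bH$) part (b) of \ref{dkfjaiuer834134k1j3} gives $f = h_{1,a} \circ f'$ with $f'$ birational. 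To close the induction I must check $\Miss(f') \subseteq \{L\}$: by \ref{Doifqrqkkdsjf8}\eqref{qhv6ghqwka7}, $\Gamma_f = \Gamma_{h_{1,a}} \cup \overline{h_{1,a}(\Gamma_{f'})} = \bZ(Y) \cup \overline{h_{1,a}(\Gamma_{f'})}$, and since $\Gamma_f = \bZ(Y)$ this forces $\overline{h_{1,a}(\Gamma_{f'})} \subseteq \bZ(Y)$; as $h_{1,a}(x,y) = (xy+a, y)$ preserves the second coordinate, every missing curve of $f'$ must lie in $\bZ(Y)$ and hence equal $L$. Since $n(f') = n(f)-1$ by \ref{1dijfaksdjfall00100}, induction yields $f' = h' \circ \theta'$ with $h' \in \bH$, $\theta' \in \Aut(\aff^2)$, whence $f = (h_{1,a} \circ h') \circ \theta'$ with $h_{1,a} \circ h' \in \bH$.

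Finally, for uniqueness, given $h_1 \circ \theta_1 = h_2 \circ \theta_2$ I would set $\psi = \theta_2 \circ \theta_1^{-1}$, so that $h_1 = h_2 \circ \psi$; it then suffices to show $\psi = \id$, which forces $\theta_2 = \theta_1$ and $h_1 = h_2$. Additivity of $n$ (\ref{1dijfaksdjfall00100}, using $n(\psi)=0$) gives $m = m'$ when $h_1 = h_{m,p}$ and $h_2 = h_{m',p'}$. Writing $\psi = (P,Q)$ and comparing coordinates in $h_{m,p} = h_{m,p'} \circ \psi$ yields $Q = Y$ and then $(P - X)\,Y^m = p - p'$; since the right-hand side lies in $\bk[Y]$ with $\deg(p-p') < m$, the only possibility is $P - X = 0$ and $p = p'$, so $\psi = \id$. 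I expect the two places requiring genuine care to be the verification $\Miss(f') \subseteq \{L\}$ in the inductive step (where preserving the second coordinate is exactly what makes it work) and the final degree comparison forcing $P = X$.
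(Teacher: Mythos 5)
Your proof is correct and follows essentially the same route as the paper's: the same induction on $n(f)$, using \ref{dkfjaiuer834134k1j3} to peel off the SAC $h_{1,a}$ and \ref{Doifqrqkkdsjf8} to verify that $f'$ again satisfies condition (a), with $\bH$ being a submonoid closing the induction. The paper leaves the implication (b)$\Rightarrow$(a) and the uniqueness of $(h,\theta)$ to the reader; your explicit arguments for both (image computation for $h_{m,p}$, and the degree comparison $(P-X)Y^m = p - p'$ forcing $\psi = \id$) are correct fillings of those gaps.
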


\begin{proof}
We leave it to the reader to verify that (b) implies (a) and that 
$(h,\theta)$ is unique, in (b).
By induction on $n(f)$, we show that (a) implies (b).

If $n(f)=0$ then (b) holds with $\theta=f$ and $h = h_{0,0}$.

If $n(f)>0$ then $f$ is not an isomorphism, and hence has at least one
missing curve; so $\bZ(Y)$ is the unique missing curve of $f$;
by \ref{dkfjaiuer834134k1j3}, this missing curve is blown-up only once.
This missing curve must contain a fundamental point $(c,0)$ of $f$;
as $h_{1,c} \in \bH$ is a SAC with missing curve $\bZ(Y)$
and fundamental point $(c,0)$, \ref{dkfjaiuer834134k1j3} implies that
$f = h_{1,c}\circ f'$ for some birational morphism $f' : \aff^2 \to \aff^2$.
It is immediate that $h_{1,c}^{-1}(\Gamma)=\Gamma$,
where $\Gamma = \bZ(Y)$ is the
missing curve of $f$; so \ref{Doifqrqkkdsjf8} implies that 
the union of the missing curves of $f'$ is included in $\bZ(Y)$.
As $n( f' ) = n(f)-1$, we may assume by induction that
$f' = h' \circ \theta$ for some $h' \in \bH$ and $\theta \in \Aut(\aff^2)$.
Then $f = h_{1,c} \circ h' \circ \theta$ is the desired factorization,
where we note that $h_{1,c} \circ h' \in \bH$.
\end{proof}

\begin{lemma} \label{dfp198234okjdao}
For a birational morphism $f : \aff^2 \to \aff^2$, the following are
equivalent:
\begin{enumerata}

\item The union of the missing curves of $f$ is included in $\bZ(XY)$;

\item there exists $(M,h,\theta) \in \bM \times \bH \times \AutA$
such that $f=\gamma_M \circ h \circ \theta$:

\end{enumerata}
$$
\xymatrix{
{\aff^2} \ar[r]_{ \theta } \ar @/^1pc/ [rrr]^{f}  
&   {\aff^2}  \ar[r]_{ h }  &   {\aff^2} \ar[r]_{ \gamma_M }  &   {\aff^2}
} 
$$
\end{lemma}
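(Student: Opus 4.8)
The plan is to prove the two implications separately, treating (b)$\Rightarrow$(a) as a direct computation with \ref{Doifqrqkkdsjf8} and (a)$\Rightarrow$(b) by induction on $n(f)$. For (b)$\Rightarrow$(a), suppose $f=\gamma_M\circ h\circ\theta$. Since $\theta\in\AutA$ is surjective, $h\circ\theta$ and $h$ have the same image, so they have the same missing curves; and a one-line computation with $h_{m,p}(x,y)=(xy^m+p(y),y)$ shows $\Miss(h)\subseteq\{\bZ(Y)\}$, whence $\Gamma_{h\circ\theta}\subseteq\bZ(Y)$. Applying \ref{Doifqrqkkdsjf8}\eqref{qhv6ghqwka7} to $\aff^2\xrightarrow{h\circ\theta}\aff^2\xrightarrow{\gamma_M}\aff^2$ (the middle $\aff^2$ is affine) gives $\Gamma_f=\Gamma_{\gamma_M}\cup\overline{\gamma_M(\Gamma_{h\circ\theta})}$. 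Now $\gamma_M$ restricts to an isomorphism of the torus $(\bk^*)^2$ onto itself (the exponent matrix $M$ has $\det=\pm1$), so its image contains $(\bk^*)^2$ and therefore $\Gamma_{\gamma_M}\subseteq\bZ(XY)$; moreover for $(x,0)\in\bZ(Y)$ one coordinate of $\gamma_M(x,0)=(x^i0^j,x^k0^\ell)$ vanishes, since $(j,\ell)\neq(0,0)$, so $\overline{\gamma_M(\bZ(Y))}\subseteq\bZ(XY)$. Combining these gives $\Gamma_f\subseteq\bZ(XY)$.

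For (a)$\Rightarrow$(b) I induct on $n(f)$. The base case $n(f)=0$ is an automorphism, handled by $M=I$, $h=h_{0,0}$, $\theta=f$. In general $\emptyset\neq\Miss(f)\subseteq\{\bZ(X),\bZ(Y)\}$, and I first dispose of the cases $q(f)\le1$. If $\Miss(f)\subseteq\bZ(Y)$, then \ref{dfpiwer;lkasd} gives $f=h\circ\theta$ and I take $M=I$. If $\Miss(f)=\{\bZ(X)\}$, I apply the swap $\sigma=\gamma_{\left(\begin{smallmatrix}0&1\\1&0\end{smallmatrix}\right)}$, note that $\Miss(\sigma\circ f)=\{\sigma(\bZ(X))\}=\{\bZ(Y)\}$, use \ref{dfpiwer;lkasd} to write $\sigma\circ f=h\circ\theta$, and conclude $f=\sigma\circ h\circ\theta$ using $\sigma^{-1}=\sigma\in\bG$.

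The essential case is $\Miss(f)=\{\bZ(X),\bZ(Y)\}$. Here the common point $(0,0)$ is a fundamental point of $f$ by \ref{dijfaksdjfall00000}\eqref{000006}. Since both missing curves have degree $1$ with respect to $(X,Y)$, Proposition~\ref{dkfjaiuer834134k1j3} applies and produces a missing curve $L\in\{\bZ(X),\bZ(Y)\}$ that is blown up only once; as $(0,0)$ lies on $L$ and $L$ carries a unique fundamental point, that point must be $(0,0)$. I then peel off the corresponding \emph{monomial} SAC $\gamma_{M_0}$ — namely $(x,y)\mapsto(xy,y)$ if $L=\bZ(Y)$ and $(x,y)\mapsto(x,xy)$ if $L=\bZ(X)$, each lying in $\bG$ with fundamental point $(0,0)$ — writing $f=\gamma_{M_0}\circ f'$ via \ref{dkfjaiuer834134k1j3}(b). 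The key point is that $\gamma_{M_0}^{-1}(\bZ(XY))=\bZ(XY)$ in both cases, so \ref{Doifqrqkkdsjf8}\eqref{qhv6ghqwka7} forces $\Miss(f')\subseteq\bZ(XY)$, while $n(f')=n(f)-1$ by \ref{1dijfaksdjfall00100}. The induction hypothesis gives $f'=\gamma_{M'}\circ h\circ\theta$, and the relation $\gamma_{M_0}\circ\gamma_{M'}=\gamma_{M_0M'}$ from \ref{difuqp934913kmq34} yields $f=\gamma_{M_0M'}\circ h\circ\theta$, closing the induction.

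The main obstacle is the ordering constraint built into the target shape $\gamma_M\circ h\circ\theta$: a once-blown-up line whose fundamental point sat off the origin would force a \emph{non}-monomial SAC $(x,y)\mapsto(xy+c,y)$ with $c\neq0$, which does not lie in $\bG$ and cannot be commuted past $h$ into leading position. The argument is designed so this never happens: when both axes are missing curves, their shared fundamental point is pinned to $(0,0)$ (so $c=0$ and the peeled SAC is monomial), and a single missing axis is rotated onto $\bZ(Y)$ by the swap rather than peeled. As a safeguard for the borderline computation, I note that whenever such an off-origin point would occur the preimage of the opposite axis is an irreducible hyperbola with two places at infinity, which by \ref{dijfaksdjfall00000}\eqref{000003} cannot be a missing curve of $f'$; this confirms that the residual missing curves always stay inside the coordinate axes.
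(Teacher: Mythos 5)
Your proof is correct and follows essentially the same route as the paper's: induction on $n(f)$, reduction of the single-missing-axis case to Lemma~\ref{dfpiwer;lkasd} via the swap $\gamma_{\left(\begin{smallmatrix}0&1\\1&0\end{smallmatrix}\right)}$, and, when $\Miss(f)=\{\bZ(X),\bZ(Y)\}$, pinning the fundamental point of the once-blown-up missing curve to $(0,0)$ so that a \emph{monomial} SAC $\gamma_{M_0}$ can be peeled off by \ref{dkfjaiuer834134k1j3}, with \ref{Doifqrqkkdsjf8} and \ref{1dijfaksdjfall00100} closing the induction. The only differences are cosmetic: you spell out the (b)$\Rightarrow$(a) direction, which the paper leaves to the reader, and your final "safeguard" paragraph is redundant given that the argument already forces the fundamental point to be the origin.
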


\begin{proof}
It is easily verified that (b) implies (a).
We prove that (a) implies (b) by induction on $n(f)$.
Assume that $f$ satisfies (a).

If $n(f) = 0$ then $f$ is an isomorphism, so (b) holds
with $\theta=f$, $h=h_{0,0}$ and
$M=\left(\begin{smallmatrix} 1&0 \\ 0&1 \end{smallmatrix} \right)$.

Let $n>0$ and assume that the result is true whenever $n(f)<n$.
Now consider $f$ satisfying (a) and such that $n(f)=n$.

If $q(f)=1$ then the missing curve $\Gamma$ of $f$ is $\bZ(X)$ or $\bZ(Y)$.
Define $M = 
\left(\begin{smallmatrix} 1&0 \\ 0&1 \end{smallmatrix} \right)$
(resp.\ $M = 
\left(\begin{smallmatrix} 0&1 \\ 1&0 \end{smallmatrix} \right)$)
if $\Gamma = \bZ(Y)$ (resp.\ $\Gamma = \bZ(X)$).
Then $\gamma_{M}\circ f$ has a unique missing curve, and this
curve is $\bZ(Y)$. Applying \ref{dfpiwer;lkasd} to 
$\gamma_{M}\circ f$ gives 
$\gamma_{M}\circ f = h \circ \theta$ for some $\theta \in \AutA$
and $h \in \bH$.  Noting that 
$\gamma_{M}\circ \gamma_{M}$ is the identity, we get
$f = \gamma_{M} \circ h \circ \theta$.

From now-on,  assume that $q(f)=2$.
Let $\Gamma$ be the union of the missing curves of $f$, i.e., 
$\Gamma=\bZ(XY)$.
By \ref{dkfjaiuer834134k1j3}, some element $L$ of 
$\Miss(f) = \big\{ \bZ(X),  \bZ(Y) \big\}$ is blown-up only once.
As $(0,0)$ is a common point of the two missing curves, it must be 
a fundamental point of $f$. For a suitable choice of 
$M_1 \in \left\{
\left( \begin{smallmatrix} 1 & 1 \\ 0 & 1 \end{smallmatrix} \right) ,
\left( \begin{smallmatrix} 1 & 0 \\ 1 & 1 \end{smallmatrix} \right)
\right\}$,
$\gamma_{M_1}$ is a SAC with missing curve $L$ and fundamental
point $(0,0)$. Then \ref{dkfjaiuer834134k1j3} implies that
$f=\gamma_{M_1} \circ f'$ for some 
birational morphism $f' : \aff^2 \to \aff^2$.
By \ref{Doifqrqkkdsjf8},
the union of the missing curves of $f'$ is included in
$\gamma_{M_1}^{-1}( \Gamma ) = \Gamma$, so $f'$ satisfies (a).
As $n(f') = n(f)-1$, the inductive hypothesis implies that
$f' = \gamma_{M_2} \circ h \circ \theta$ for some 
$\theta\in\AutA$, $h \in \bH$ and $M_2 \in \bM$.
So $f= \gamma_{M_1} \circ \gamma_{M_2} \circ h \circ \theta$,
and since $\gamma_{M_1} \circ \gamma_{M_2} = \gamma_{M_1M_2}$, 
we are done.
\end{proof}

\begin{parag}
Let $\Delta = \Delta_\cgoth$ be the subgroup of $\AutA$ whose elements
are of the form $\delta(x,y) = (x,y+q(x))$, with $q \in \bk[X]$.
\end{parag}

\begin{lemma} \label {ffj388383jrwejxk}
Let $\Gamma = \bZ\big( \prod_{i=1}^s (X-c_i) \big)$ where
$c_1, \dots, c_s$ ($s\ge0$) are distinct elements of  $\bk$.
For a birational morphism $f : \aff^2 \to \aff^2$, the following are
equivalent:
\begin{enumerata}

\item The union of the missing curves of $f$ is included in $\Gamma$;

\item there exists a commutative diagram
$$
\xymatrix{
{\aff^2} \ar[r]^{f} \ar[d]_{\theta} & {\aff^2} \ar[d]^{\delta}  \\
{\aff^2}  \ar[r]_{ v_\phi } & {\aff^2}
}
$$
where $\delta \in \Delta$, $\theta \in \AutA$,
$\phi \in \bk[X]\setminus\{0\}$ and where the set of roots of $\phi$ is
included in $\{c_1, \dots, c_s\}$.

\end{enumerata}
\end{lemma}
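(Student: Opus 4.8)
The plan is to prove the two implications separately, the substance being in $(a)\Rightarrow(b)$. Throughout I will use the elementary facts that, for an automorphism $u$, one has $\Miss(g\circ u)=\Miss(g)$ (precomposition does not change the image) and $\Miss(u\circ g)=u\big(\Miss(g)\big)$ (a curve $C$ is missing for $u\circ g$ iff $u^{-1}(C)$ is missing for $g$), both of which follow at once from the definition in \ref{dkfjqp923p1`2m3k23}. For $(b)\Rightarrow(a)$: the commutative square gives $f=\delta^{-1}\circ v_\phi\circ\theta$, hence $\Miss(f)=\delta^{-1}\big(\Miss(v_\phi)\big)$. A direct inspection of the image of $v_\phi(x,y)=(x,\phi(x)y)$ shows that $\Miss(v_\phi)$ consists exactly of the vertical lines $\bZ(X-a)$ with $\phi(a)=0$ (each meeting the image only at $(a,0)$). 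Since every element of $\Delta$ fixes the $X$-coordinate it fixes each such line, so $\Miss(f)=\{\,\bZ(X-a):\phi(a)=0\,\}$, which lies in $\Gamma$ because the roots of $\phi$ are among $c_1,\dots,c_s$.

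For $(a)\Rightarrow(b)$ I would argue by induction on $n(f)$, keeping track of the normal form $f=\delta^{-1}\circ v_\phi\circ\theta$. The algebraic engine is the commutation relation $v_\phi\circ\delta_q=\delta_{\phi q}\circ v_\phi$, where $\delta_q(x,y)=(x,y+q(x))$ (checked by a one-line computation), which lets me push every $\Delta$-factor to the left past every $\bV$-factor; combined with $\delta_{q_1}\circ\delta_{q_2}=\delta_{q_1+q_2}$ and $v_{\phi_1}\circ v_{\phi_2}=v_{\phi_1\phi_2}$, this is precisely what makes the normal form stable under the induction. The base case $n(f)=0$ is immediate: then $f\in\AutA$, and one takes $\phi=1$ (so $v_\phi=\id$, with no roots), $\delta=\id$ and $\theta=f$.

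For the inductive step, note first that since $\Miss(f)\subseteq\Gamma$ every missing curve is one of the vertical lines $\bZ(X-c_i)$ and hence has degree $1$ with respect to $(X,Y)$; so hypotheses (i) and (ii) of \ref{dkfjaiuer834134k1j3} hold. That proposition yields a missing curve $L=\bZ(X-c)$ (with $c\in\{c_1,\dots,c_s\}$) blown up only once, and a unique fundamental point $P=(c,d)$ on $L$. I would take the SAC $\gamma(x,y)=\big(x,(x-c)(y-d)+d\big)=\delta_d\circ v_{X-c}\circ\delta_d^{-1}$, which contracts $L$ to $P$ and so has missing curve $L$ and fundamental point $P$; by \ref{dkfjaiuer834134k1j3}(b) this factors $f=\gamma\circ f'$ with $f'\in\BirA$ and (by \ref{1dijfaksdjfall00100}) $n(f')=n(f)-1$. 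The decisive point is to verify $\Miss(f')\subseteq\Gamma$ so that the induction hypothesis applies: by \ref{Doifqrqkkdsjf8}(b), $\Gamma_f=L\cup\overline{\gamma(\Gamma_{f'})}$, so for each $C\in\Miss(f')$ the irreducible set $\overline{\gamma(C)}$ is contained in $\Gamma$ (a union of vertical lines) and is therefore either a single line $\bZ(X-c_i)$ or a point; since $\gamma$ preserves the $X$-coordinate, in either case $C$ itself must equal one of the $\bZ(X-c_i)$, giving $\Miss(f')\subseteq\Gamma$. Applying the induction hypothesis, $f'=(\delta')^{-1}\circ v_{\phi'}\circ\theta$ with the roots of $\phi'$ in $\{c_1,\dots,c_s\}$; substituting into $f=\gamma\circ f'=\delta_d\circ v_{X-c}\circ\delta_d^{-1}\circ(\delta')^{-1}\circ v_{\phi'}\circ\theta$ and applying the commutation relations collapses everything to $f=\delta^{-1}\circ v_\phi\circ\theta$ with $\phi=(X-c)\phi'$, whose roots still lie in $\{c_1,\dots,c_s\}$.

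The main obstacle is the verification that $\Miss(f')\subseteq\Gamma$, since without it the induction hypothesis does not apply; here the fact that the chosen SAC $\gamma$ preserves the $X$-coordinate is exactly what forces each missing curve of $f'$ back onto a line of $\Gamma$. The only other delicate point is the final bookkeeping: one must keep the $\Delta$-factors and $\bV$-factors separated via the commutation identity and confirm that multiplying by $(X-c)$ does not introduce roots outside $\{c_1,\dots,c_s\}$, which is automatic since $c$ is one of the $c_i$.
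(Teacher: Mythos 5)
Your proof is correct and follows essentially the same route as the paper's: induction on $n(f)$, using \ref{dkfjaiuer834134k1j3} to produce a vertical missing curve $L=\bZ(X-c_j)$ blown up only once, factoring $f$ through a SAC contracting $L$, verifying via \ref{Doifqrqkkdsjf8} that the quotient $f'$ still satisfies (a), and then merging the factorizations with the commutation relation between $\Delta$ and $\bV$. The only (cosmetic) difference is bookkeeping: the paper pre-composes with $\delta_1\in\Delta$ so that the SAC is literally $v_{(X-c_j)}$ and pushes the single $\Delta$-factor across afterwards, whereas you conjugate the SAC itself ($\gamma=\delta_d\circ v_{X-c}\circ\delta_d^{-1}$) and collapse all $\Delta$-factors at the end.
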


\begin{proof}
That (b) implies (a) is left to the reader.
Suppose that $f$ satisfies (a).  We prove (b) by induction on $n(f)$.

If $n(f)=0$ then $f$ is an isomorphism, so (b) holds with $\theta=f$,
$\phi=1$ and $\delta=\id$.

Let $n>0$ be such that the result is true whenever $n(f)<n$.
Consider $f$ satisfying (a) and such that $n(f)=n$.
Then $f$ is not an isomorphism, and hence has at least one missing
curve (so $s > 0$).
By \ref{dkfjaiuer834134k1j3}, one of the missing curves
(say $L=\bZ(X-c_j)$) of $f$ is blown-up only once.
We know that $L$ contains a fundamental point $(c_j,d)$ of $f$;
let $\delta_1 \in \Delta$ be defined by $\delta_1(x,y) = (x,y-d)$
and let $f_1 = \delta_1 \circ f$.
Since $L$ is a missing curve of $f$ which is blown-up only once and 
$(c_j,d) \in L$ is a fundamental point of $f$, it follows that 
$\delta_1(L)=L$ is a missing curve of $f_1$ which is blown-up only once
and that $\delta_1(c_j,d)=(c_j,0) \in L$ is a fundamental point of $f_1$.
As $v_{(X-c_j)}$ is a SAC with missing curve $L$ and fundamental point
$(c_j,0)$, \ref{dkfjaiuer834134k1j3} implies that $f_1$ factors through 
$v_{(X-c_j)}$. Thus 
$\delta_1 \circ f = v_{(X-c_j)} \circ f'$ for some birational morphism
$f' : \aff^2 \to \aff^2$:
$$
\xymatrix{
{\aff^2} \ar[r]^{f} \ar[d]_{f'} & {\aff^2} \ar[d]^{\delta_1}  \\
{\aff^2}  \ar[r]_{ v_{(X-c_j)} } & {\aff^2}
}
$$
Since $\delta_1$ maps each vertical line onto itself, the union of all missing curves of $f_1$
is $\Gamma$; so, by \ref{Doifqrqkkdsjf8},
the union of the missing curves of $f'$ is included in
$v_{(X-c_j)}^{-1}( \Gamma ) = \Gamma$, so $f'$ satisfies (a).
As $n(f') = n(f)-1$, the inductive hypothesis implies that
there exists a commutative diagram (ignore the dotted arrows for now)
\begin{equation}  \label{diufq23uwe9jds}
\xymatrix{
& {\aff^2} \ar[r]^{f} \ar[d]_{f'} \ar[dl]_{\theta}
& {\aff^2} \ar[d]^{\delta_1}  \\
{\aff^2} \ar[d]_{v_{\phi'}}
& {\aff^2}  \ar[r]_{ v_{(X-c_j)} }  \ar[dl]_{\delta_2}
& {\aff^2}  \ar @{.>}  @<.8ex> [dl]^{\delta_3} \\
{\aff^2} \ar @{.>} [r]_{v_{(X-c_j)}} & {\aff^2}
}
\end{equation}
with $\phi' \in \bk[X]\setminus \{0\}$ (and all roots of $\phi'$
are in $\{c_1, \dots, c_s \}$),
$\theta \in \AutA$,
and $\delta_2 \in \Delta$ defined by
$\delta_2(x,y)=(x,y+q(x))$ (some $q \in \bk[X]$).
Now if we define $\delta_3 \in \Delta$
by $\delta_3(x,y)=(x,y+(x-c_j)q(x))$, then 
$$
\delta_3 \circ v_{(X-c_j)} = v_{(X-c_j)} \circ \delta_2.
$$
So diagram \eqref{diufq23uwe9jds}, including the dotted arrows,
is commutative.
Let $\delta = \delta_3 \circ \delta_1 \in \Delta$ and 
$\phi = (X-c_j)\phi'(X)$ (so $v_{(X-c_j)}\circ v_{\phi'} = v_\phi$);
then $\delta, \theta, v_\phi$ give the commutative diagram displayed
in the statement of assertion~(b).
\end{proof}

\begin{lemma} \label {seur823412y8dshfjs}
Let $\Gamma = \bZ\big( Y \prod_{i=1}^s (X-c_i) \big)$,
where $s\ge1$ and $c_1, \dots, c_s$ are distinct elements of $\bk$.
Let $f : \aff^2 \to \aff^2$ be a birational morphism such that:
\begin{equation} \label{dsjfq2rewkdfisdkddddd}
\textit{The union of the missing curves of $f$ is equal to $\Gamma$.}
\end{equation}
Then there exists a commutative diagram
$$
\xymatrix{
{\aff^2} \ar[rrr]^{f} \ar[d]_{\theta} &&& {\aff^2} \ar[d]^{T}  \\
{\aff^2} \ar[r]_{ h } & {\aff^2}  \ar[r]_{ \gamma_M }
& {\aff^2}  \ar[r]_{ v_\phi } & {\aff^2}
}
$$
where $T \in \AutA$ is of the form $T(x,y)=(x-c,y)$ with $c \in \bk$,
$\theta$ is an arbitrary element of $\AutA$,
and $(\phi,M,h) \in (\bk[X]\setminus\{0\}) \times \bM \times \bH$.
\end{lemma}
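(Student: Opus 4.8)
The plan is to argue by induction on $n(f)$, peeling off one simple affine contraction at each step and invoking the structure theorems already proved for the configurations $\bZ(Y)$ and $\bZ(XY)$. First I would record that $\Miss(f) = \{\bZ(Y),\bZ(X-c_1),\dots,\bZ(X-c_s)\}$ is an admissible configuration in the sense of \ref{ioufq923lksmdjjj}: each curve is a coordinate line, the only incidences are the transverse intersections $\bZ(Y)\cap\bZ(X-c_i)=\{(c_i,0)\}$, and the incidence graph is the star centred at $\bZ(Y)$, hence a forest. In particular, since all missing curves have degree $1$ in $\cgoth=(X,Y)$, assumption~(ii) of \ref{dkfjaiuer834134k1j3} holds, so $f$ has a missing curve that is blown up only once. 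By \ref{dijfaksdjfall00000}\eqref{000006} each point $(c_i,0)$ is a fundamental point of $f$; and since a missing curve blown up only once carries exactly one fundamental point (see the remarks following \ref{9f2039esdow9e}), the curve $\bZ(Y)$ can be blown up only once \emph{only} when $s=1$. Consequently, when $s\ge2$ the blown-up-once curve is forced to be one of the vertical lines.

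The induction then splits according to the blown-up-once curve $L$. In Case~1, where $L=\bZ(X-c_j)$, the unique fundamental point of $f$ on $L$ is $(c_j,0)$, and $v_{(X-c_j)}$ is a SAC with missing curve $L$ and fundamental point $(c_j,0)$; so \ref{dkfjaiuer834134k1j3}(b) yields $f=v_{(X-c_j)}\circ f'$ with $n(f')=n(f)-1$ by \ref{1dijfaksdjfall00100}. A preimage computation ($v_{(X-c_j)}^{-1}(\bZ(Y))=\bZ(X-c_j)\cup\bZ(Y)$ and $v_{(X-c_j)}^{-1}(\bZ(X-c_i))=\bZ(X-c_i)$) together with \ref{Doifqrqkkdsjf8} shows $\Miss(f')\subseteq\{\bZ(Y),\bZ(X-c_1),\dots,\bZ(X-c_s)\}$, that $\bZ(Y)\in\Miss(f')$, and that every $\bZ(X-c_i)$ with $i\ne j$ lies in $\Miss(f')$; thus $\Miss(f')$ again has the required shape. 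If $f'$ still has a vertical missing line I apply the induction hypothesis to $f'$, obtaining $T'\circ f'=v_{\phi'}\circ\gamma_{M'}\circ h'\circ\theta'$ with $T'(x,y)=(x-c'',y)$; if instead $\Miss(f')=\{\bZ(Y)\}$ (possible only when $s=1$, $j=1$), then \ref{dfpiwer;lkasd} gives $f'=h'\circ\theta'$ and $f=v_{(X-c_1)}\circ\gamma_M\circ h'\circ\theta'$ directly with $T=\id$, $\phi=X-c_1$ and $M=\left(\begin{smallmatrix}1&0\\0&1\end{smallmatrix}\right)$. In Case~2, where no vertical line is blown up only once, necessarily $L=\bZ(Y)$ and $s=1$; the fundamental point of $f$ on $\bZ(Y)$ is $(c_1,0)$, and $h_{1,c_1}\in\bH$ is a SAC with this missing curve and fundamental point, so $f=h_{1,c_1}\circ f'$. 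Here $h_{1,c_1}^{-1}(\Gamma)=\bZ(XY)$, whence $\Miss(f')\subseteq\bZ(XY)$ and \ref{dfp198234okjdao} gives $f'=\gamma_M\circ h\circ\theta$.

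It remains to reassemble each factorization into the prescribed shape $T\circ f=v_\phi\circ\gamma_M\circ h\circ\theta$, and this is where the only genuine care is required. In Case~2, writing $T_0(x,y)=(x-c_1,y)$ one checks $h_{1,c_1}=T_0^{-1}\circ\gamma_{M_0}$ with $M_0=\left(\begin{smallmatrix}1&1\\0&1\end{smallmatrix}\right)$ (since $\gamma_{M_0}(x,y)=(xy,y)$), so that $T_0\circ f=\gamma_{M_0M}\circ h\circ\theta=v_{1}\circ\gamma_{M_0M}\circ h\circ\theta$, which is the desired form with $T=T_0$ and $\phi=1$. In Case~1 the difficulty is that the inductive factorization of $f'$ carries its own $x$-translation $S=(T')^{-1}$, $S(x,y)=(x+c'',y)$, sitting between $v_{(X-c_j)}$ and $v_{\phi'}$. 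The device that resolves this is the commutation relation $v_{(X-c_j)}\circ S=S\circ v_{(X-(c_j-c''))}$, verified by a direct coordinate computation, combined with $v_a\circ v_b=v_{ab}$: these let me slide $S$ to the outside and merge the two vertical maps, giving $f=S\circ v_\phi\circ\gamma_{M'}\circ h'\circ\theta'$ with $\phi=(X-(c_j-c''))\,\phi'$, i.e.\ $T'\circ f=v_\phi\circ\gamma_{M'}\circ h'\circ\theta'$ with $T=T'$ of the required translational form.

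The main obstacle throughout is precisely this bookkeeping of the intermediate $x$-translations produced by the SAC-rectification and by the inductive hypothesis: one must ensure they commute past $v_\phi$ (being absorbed into a shift of its roots) and never interfere with $\gamma_M$ or $h$, so that a single outer translation $T$ and a single $v_\phi$ always suffice. Since each peeling strictly decreases $n(f)$ by \ref{1dijfaksdjfall00100}, and since every branch terminates either in Case~2 or in an application of \ref{dfpiwer;lkasd} or the induction hypothesis, the induction closes and the factorization is established.
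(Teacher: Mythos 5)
Your proof is correct and follows essentially the same route as the paper's: induction on $n(f)$, peeling off a SAC at a missing curve that is blown up only once (via \ref{dkfjaiuer834134k1j3}), controlling $\Miss(f')$ with \ref{Doifqrqkkdsjf8}, and sliding the translation coming from the inductive hypothesis past $v_{(X-c_j)}$ by the same commutation identity. The only cosmetic difference is organizational: the paper dispatches the case $s=1$ upfront by applying \ref{dfp198234okjdao} directly to $T\circ f$ and then restricts the inductive step to $s>1$, whereas you absorb $s=1$ into the induction by peeling $h_{1,c_1}$ (your Case~2) or $v_{(X-c_1)}$ and then invoking \ref{dfp198234okjdao} or \ref{dfpiwer;lkasd}.
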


\begin{proof}
We first settle the case $s=1$.
Define $T \in \AutA$ by $T(x,y)= (x-c_1,y)$.
Then the union of the missing curves of $T \circ f$ is $\bZ(XY)$,
so \ref{dfp198234okjdao} implies that there
exists $(M,h,\theta) \in \bM \times \bH \times \AutA$
such that $T \circ f =\gamma_M \circ h \circ \theta
=v_\phi \circ \gamma_M \circ h \circ \theta$ with $\phi=1$,
so the result is true when $s=1$.

We proceed by induction on $n(f)$.
For $f$ satisfying \eqref{dsjfq2rewkdfisdkddddd} we have $q(f)=s+1\ge2$,
so the least possible value for $n(f)$ is $2$.
If $n(f)=2$ then $q(f)\le n(f)=2$, so $s=1$ and the result is true in this case.

Let $n>2$ be such that the result is true whenever $n(f)<n$.
Consider $f$ satisfying \eqref{dsjfq2rewkdfisdkddddd} and such that $n(f)=n$.

By the first paragraph, we may assume that $s>1$.
By \ref{dkfjaiuer834134k1j3}, one of the missing curves
(say $L$) of $f$ is blown-up only once; we choose such an $L$.
By \ref{dijfaksdjfall00000}\eqref{000006}, the points
$(c_i,0)$, $1\le i\le s$, are fundamental points of $f$;
so $\bZ(Y)$ is blown-up at least $s\ge2$
times and hence $L=\bZ(X-c_j)$ for some $j \in \{1, \dots, s\}$.
As $v_{(X-c_j)}$ is a SAC with missing curve $L$ and fundamental
point $(c_j,0)$, 
\ref{dkfjaiuer834134k1j3} implies that $f = v_{(X-c_j)} \circ f'$
for some birational morphism $f' : \aff^2 \to \aff^2$.
Let $\Gamma' = \bigcup_{C \in \Miss(f')}C$.
By \ref{Doifqrqkkdsjf8},
$\Gamma' \subseteq v_{(X-c_j)}^{-1}( \Gamma ) = \Gamma$;
in fact it is easy to see (again by \ref{Doifqrqkkdsjf8}) that
$$
\bZ\big( Y \prod_{i \in I} (X-c_i) \big)
\subseteq \Gamma' \subseteq
\bZ\big( Y \prod_{i=1}^s (X-c_i) \big),
$$
where $I = \{1, \dots, s\} \setminus \{j\}$,
so $f'$ satisfies the hypothesis of the lemma.
As $n(f')=n(f)-1$, the inductive hypothesis implies that
$T \circ f' = v_\psi \circ \gamma_M \circ h \circ \theta$ 
for some $(\psi,M,h) \in (\bk[X]\setminus\{0\}) \times \bM \times \bH$
and $\theta, T \in \AutA$,
where $T$ is of the form $T(x,y)=(x-c,y)$ for some $c \in \bk$.
Noting that $ T \circ v_{(X-c_j)} = v_{(X+c-c_j)} \circ T $,
we get
\begin{multline*}
T \circ f = T \circ v_{(X-c_j)} \circ f'
= v_{(X+c-c_j)} \circ T \circ f'
= v_{(X+c-c_j)} \circ  v_\psi \circ \gamma_M \circ h \circ \theta \\
= v_\phi \circ \gamma_M \circ h \circ \theta,
\end{multline*}
where $\phi(X) = (X+c-c_j)\psi(X) \in \bk[X] \setminus \{0\}$,
as desired. 
\end{proof}

Before stating the main theorem of this section, let us recall the assumptions under which it is valid.
Our base field $\bk$ is an algebraically closed field of arbitrary characteristic, 
and $\aff^2$ is the affine plane over $\bk$.
We fix a coordinate system $\cgoth=(X,Y)$ of $\aff^2$;
this allows us to 
use coordinates for defining morphisms $\aff^2 \to \aff^2$ (cf.\ Section~\ref{Sec:AdmissConfig}).
The choice of $\cgoth$ also determines the submonoids 
$\bV = \bV_\cgoth$,  $\bG = \bG_\cgoth$ and  $\bH = \bH_\cgoth$ of $\Bir(\aff^2)$ (cf.\ \ref{27384ygfq7239j}).
Then we have the following result:

\begin{theorem}  \label {du-239412948ynf}
Let $f : \aff^2 \to \aff^2$ be a birational morphism.
\begin{enumerata}

\item The following conditions are equivalent: 

\begin{enumerata}

\item[(i)] $\Miss(f)$ is weakly admissible;

\item[(ii)] $f$ is equivalent to one of the following elements of $\BirA$:
\begin{itemize}

\item $\alpha_i^m \circ v \circ \gamma \circ h$,
for some $(v,\gamma,h) \in \bV \times \bG \times \bH$, $m \in \{0,1\}$ and $i \in \{ 1, 2 \}$,
where $\alpha_1,\alpha_2 \in \Bir(\aff^2)$ are the SACs defined by
$\alpha_1(x,y) = \big( xy, y \big)$ and $\alpha_2(x,y) = \big( x(1-y), 1-y \big)$;

\item the morphism $(x,y) \mapsto \big( x( p(x)y+q(x)), p(x)y+q(x) \big)$,
for some $p,q \in \bk[X]$ with $p \neq 0$.

\end{itemize}

\end{enumerata}

\item The following conditions are equivalent:

\begin{enumerata}

\item[(i)] $\Miss(f)$ is admissible;

\item[(ii)] $f$ is equivalent to $v \circ \gamma \circ h$ for some $(v,\gamma,h) \in \bV \times \bG \times \bH$.

\end{enumerata}

\item The following conditions are equivalent:

\begin{enumerata}

\item[(i)] Both $\Miss(f)$ and $\Cont(f)$ are admissible;

\item[(ii)] $f$ is equivalent to an element of $\bV \cup \bG$.

\end{enumerata}

\end{enumerata}
\end{theorem}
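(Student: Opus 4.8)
The plan is to establish part~(b) first, since both (a) and (c) rest on it. For the implication (ii)~$\Rightarrow$~(i) I would compute the missing curves of each factor: $\Miss(h_{m,p})\subseteq\{\bZ(Y)\}$, $\Miss(v_\phi)$ is the set of vertical lines over the roots of $\phi$, and $\Miss(\gamma_M)\subseteq\{\bZ(X),\bZ(Y)\}$ because $\gamma_M$ is a composition of the SAC $\gamma_{\left(\begin{smallmatrix}1&1\\0&1\end{smallmatrix}\right)}$ with coordinate swaps. Applying \ref{Doifqrqkkdsjf8}\eqref{qhv6ghqwka7} repeatedly to $v_\phi\circ\gamma_M\circ h_{m,p}$ (each intermediate target being the affine plane) then shows that the union of its missing curves lies in $\bZ(\psi(X)Y)$ for some $\psi\in\bk[X]\setminus\{0\}$, so $\Miss(f)$ is admissible by \ref{dpfq32748rjkdkd}. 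For (i)~$\Rightarrow$~(ii) I would use \ref{dpfq32748rjkdkd} to replace $f$ by an equivalent morphism whose missing curves have union $\bZ(\phi(X)Y^j)$, and split into three cases: $j=0$ gives $f\sim v_\phi$ by \ref{ffj388383jrwejxk}; $j=1$ with $\phi$ constant gives $f\sim h_{m,p}$ by \ref{dfpiwer;lkasd}; and $j=1$ with $\phi$ nonconstant gives $f\sim v_\phi\circ\gamma_M\circ h$ by \ref{seur823412y8dshfjs}. Each lands in the desired product set.

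For part~(c), the implication (ii)~$\Rightarrow$~(i) is routine once one notes that admissibility of $\Miss$ and of $\Cont$ is invariant under equivalence, since $\Miss(u\circ f\circ w)=u(\Miss(f))$ and $\Cont(u\circ f\circ w)=w^{-1}(\Cont(f))$ for automorphisms $u,w$; for $v_\phi$ both $\Miss$ and $\Cont$ equal the same family of parallel vertical lines, and for $\gamma_M$ both are contained in $\{\bZ(X),\bZ(Y)\}$. For (i)~$\Rightarrow$~(ii) I would start from the normalization $\Gamma_f=\bZ(\phi(X)Y^j)$ of part~(b). If $j=0$ then \ref{ffj388383jrwejxk} already gives $f\sim v_\phi\in\bV$; if $j=1$ and $\phi$ is constant then $f\sim h_{m,p}\sim\gamma^{n(f)}=\gamma_{\left(\begin{smallmatrix}1&m\\0&1\end{smallmatrix}\right)}\in\bG$. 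The remaining case $j=1$ with $\phi$ nonconstant is where the hypothesis on $\Cont(f)$ must be used.

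This remaining case is the main obstacle. Here $\bZ(Y)$ and some vertical line $\bZ(X-c)$ are both missing and meet at the fundamental point $(c,0)$ (fundamental by \ref{dijfaksdjfall00000}\eqref{000006}). The key computation is that, using the factorization $f\sim v_\phi\circ\gamma_M\circ h_{m,p}$, the curve in the source contracted by $f$ onto $(c,0)$ is the closure of $(\gamma_M\circ h_{m,p})^{-1}(\bZ(X-c))$, and when $c\neq0$ this curve is isomorphic to $\bk^*$ (a ``hyperbola'' of the type $\bZ(XY-\lambda)$ with $\lambda\neq0$), hence is not a coordinate line and violates admissibility of $\Cont(f)$. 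The difficulty is to prove this cleanly from the factorization rather than merely on examples; the same $\bk^*$-contracting-curve phenomenon must also be invoked to rule out a nonzero shear when the missing curves are exactly $\{\bZ(X),\bZ(Y)\}$. Once this is done, all roots of $\phi$ must be $0$, so after a translation the missing curves reduce to a subset of $\{\bZ(X),\bZ(Y)\}$; then $q(f)\le2$, and \ref{dfp198234okjdao} together with $h_{m,0}=\gamma_{\left(\begin{smallmatrix}1&m\\0&1\end{smallmatrix}\right)}$ forces $f\sim\gamma_N\in\bG$.

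For part~(a), I would translate the hypothesis via \ref{weak-dpfq32748rjkdkd}: $\Miss(f)$ is weakly admissible exactly when all missing curves have degree~$1$ in a common coordinate system, which is hypothesis~(ii) of \ref{dkfjaiuer834134k1j3}. That proposition then puts $\Gamma_f$ into normal form (a-i) or (a-ii) and yields a factorization $f=\gamma\circ f'$ through a SAC with $\Miss(f')$ admissible. In form (a-i) the union of missing curves is a family of vertical lines together with (possibly) $\bZ(Y)$, hence admissible, and part~(b) produces the $m=0$ entries of the list. Form (a-ii), whose lines $\bZ(Y-b_iX)$ are concurrent at the origin, is the genuinely weakly-admissible-but-not-admissible situation, and here one must construct the stated representatives by left-composing with $\alpha_1$ or $\alpha_2$, the map that sends the vertical missing lines of $f'$ to the concurrent lines; the shear appearing in this construction, which cannot be absorbed after composing with $\alpha_i$, is exactly what forces the separate explicit form $(x,y)\mapsto\big(x(p(x)y+q(x)),\,p(x)y+q(x)\big)=\alpha_1\circ\delta\circ v_p$. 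The secondary obstacle here is bookkeeping: because $\sim$ is not compatible with composition, one cannot simply replace $f'$ by an equivalent element of $\bV\times\bG\times\bH$ before composing with $\alpha_i$, so the construction must track the explicit morphism $f'$ supplied by \ref{dkfjaiuer834134k1j3}.
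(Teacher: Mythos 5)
Your treatment of parts (a) and (b) follows the paper's own proof: normalize the union of missing curves via \ref{dpfq32748rjkdkd} (resp.\ \ref{weak-dpfq32748rjkdkd} and \ref{dkfjaiuer834134k1j3}), apply \ref{ffj388383jrwejxk}, \ref{dfpiwer;lkasd} and \ref{seur823412y8dshfjs}, and in (a) compose on the left with $\alpha_1$ or $\alpha_2$ while tracking the shear and the translation through the explicit commutative diagrams; that part of the proposal is sound.

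Part (c), however, contains a genuine error, not merely a gap. Your ``key computation'' claims that if $c \neq 0$ is a root of $\phi$, then the contracting curve of $f = v_\phi \circ \gamma_M \circ h_{m,p}$ lying over $(c,0)$, namely the closure of $(\gamma_M \circ h_{m,p})^{-1}(\bZ(X-c))$, is isomorphic to $\bk^*$, whence $\Cont(f)$ cannot be admissible; from this you deduce that all roots of $\phi$ vanish. This is false: the polynomial $p$ inside $h_{m,p}$ can cancel the translation by $c$. Take $c \in \bk^*$, $\phi = X-c$, $M$ the identity matrix, and $h_{1,c}(x,y) = (xy+c,\,y)$ (legitimate, since $\deg p = 0 < 1 = m$). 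Then $f = v_\phi \circ h_{1,c}$ is given by $f(x,y) = (xy+c,\ xy^2)$; its missing curves are $\bZ(Y)$ and $\bZ(X-c)$, so in your normalization $j=1$ and $\phi$ has the nonzero root $c$, yet $(\gamma_M \circ h_{m,p})^{-1}(\bZ(X-c)) = \bZ(XY+c-c) = \bZ(XY)$ is a union of two coordinate lines, not a hyperbola, and $\Cont(f) = \{\bZ(X), \bZ(Y)\}$ is admissible. (Consistently with the theorem, $f$ is equivalent to $\gamma_N \in \bG$ with $N = \left(\begin{smallmatrix} 1 & 1 \\ 1 & 2 \end{smallmatrix}\right)$, via the translation $(x,y) \mapsto (x-c,y)$.) So your argument would wrongly conclude that this $f$ fails (c-i), and the intermediate statement ``all roots of $\phi$ are $0$'' is simply not a consequence of (c-i). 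The paper's proof is organized precisely around this phenomenon: its claim \eqref{SecondClaim} shows that admissibility of $\Cont(v_\phi \circ h_{m,p})$ forces $p(Y)-c$ to be the \emph{zero polynomial} for every root $c$ of $\phi$ --- hence $\phi = a(X-c)^n$ and $p \equiv c$, after which $v_\phi \circ h_{m,p}$ is still equivalent to a monomial map --- while claim \eqref{FirstClaim} and the dichotomy ``$ij=0$ or $\phi = aX^n$'' of \eqref{dfjq34r7834823ehasdh} dispose of a nontrivial $\gamma_M$. In short, the hypothesis on $\Cont(f)$ constrains the pair $(\phi,p)$ jointly rather than the roots of $\phi$ alone, and any correct proof of (c) must account for these ``translated monomial'' morphisms.
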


\begin{proof}
For each of (a), (b) and (c), we show that (i) implies (ii) and leave the converse to the reader.
We begin with (b).

Suppose that $f$ satisfies (b-i). 
Let $\Gamma = \bigcup_{C \in \Miss(f)}C$.
By \ref{dpfq32748rjkdkd}, there exists $\omega \in \AutA$
such that $\omega( \Gamma ) = \bZ\big( Y^d \prod_{i=1}^s (X-c_i) \big)$,
where:
$$
\text{$d \in \{0,1\}$, $s\ge0$ 
and $c_1, \dots, c_s$ are distinct elements of $\bk$.}
$$

Note that the union of the missing curves of $f_1 = \omega \circ f$
is equal to 
$\bZ\big( Y^d \prod_{i=1}^s (X-c_i) \big)$;
as $f_1\sim f$, it is enough to prove that
$f_1$ is equivalent to $v \circ \gamma \circ h$ for some $(v,\gamma,h) \in \bV \times \bG \times \bH$.
So we may as well replace $f$ by $f_1$ throughout; so from now-on we assume
that 
$$
\Gamma = \bigcup_{C \in \Miss(f)} \! C \,\,
= \bZ\big( Y^d \prod_{i=1}^s (X-c_i) \big).
$$

If $d=0$, (resp.\ $s=0$),
then the desired conclusion follows from \ref{ffj388383jrwejxk}
(resp.\ from \ref{dfpiwer;lkasd}).
So we may assume that $d=1$ and $s\ge1$.
Then \ref{seur823412y8dshfjs} gives the desired conclusion,
i.e., we showed that (b-i) implies (b-ii).

Suppose that (a-i) holds. 
Let $\Gamma = \bigcup_{C \in \Miss(f)}C$.
By \ref{weak-dpfq32748rjkdkd}, $f$ satisfies the hypothesis of \ref{dkfjaiuer834134k1j3}.
To prove (a-ii), we may assume that $\Miss(f)$ is not admissible (otherwise (a-ii) follows from (b)).
Then \ref{dkfjaiuer834134k1j3} implies that there exists $\omega \in \AutA$
such that $\omega(\Gamma) = \bZ(F)$ where 
\begin{equation} \label {86tg6g-----d33n}
F = Y \textstyle \prod_{i=1}^s (X-c_iY) \quad \text{or} \quad F = Y(Y-1) \textstyle \prod_{i=1}^s (X-c_iY),
\end{equation}
where $s\ge2$ and $c_1, \dots, c_s \in \bk$ are distinct.
We know, also by \ref{dkfjaiuer834134k1j3}, that some missing curve of $f$ (say $C_0 \in \Miss(f)$)
is blown-up only once.  In the second case of \eqref{86tg6g-----d33n},
$\omega(C_0)$ is necessarily equal to $\bZ(Y)$;
in the first case, we may choose $\omega$ in such a way that $\omega(C_0) = \bZ(Y)$.

It is clear that we may replace $f$ by $\omega \circ f$ throughout.
Then we have $\Gamma = \bZ(F)$,
$\bZ(Y)$ is a missing curve of $f$ which is blown-up only once,
and $(0,0)$ is the unique fundamental point of $f$ which lies on $\bZ(Y)$.
If $F$ is as in the first (resp.~the second) case of \eqref{86tg6g-----d33n}, let $\alpha=\alpha_1$
(resp.~ $\alpha=\alpha_2$), where $\alpha_1, \alpha_2 \in \BirA$ are defined in the statement;
then $\alpha$ is a SAC with missing curve $\bZ(Y)$ and fundamental point $(0,0)$.
By \ref{dkfjaiuer834134k1j3}, it follows that $f = \alpha \circ f'$
for some $f' \in \Bir(\aff^2)$.
Let $\Gamma'$ be the union of the missing curves of $f'$.
Using \ref{Doifqrqkkdsjf8}, we find 
\begin{align*}
\text{in the first case of \eqref{86tg6g-----d33n},}\quad
& \textstyle \bZ\big( \prod_{i =1}^s (X-c_i) \big)
\subseteq \Gamma' \subseteq
\bZ\big( Y \prod_{i=1}^s (X-c_i) \big);\\
\text{in the second case of \eqref{86tg6g-----d33n},}\quad
& \textstyle \bZ\big( Y \prod_{i =1}^s (X-c_i) \big)
\subseteq \Gamma' \subseteq
\bZ\big( Y(Y-1) \prod_{i=1}^s (X-c_i) \big).
\end{align*}
In particular, $f'$ satisfies the hypothesis of \ref{dkfjaiuer834134k1j3};
by that result, some missing curve of $f'$ is blown-up only once;
so $\Gamma'$ cannot be equal to $\bZ\big( Y(Y-1) \prod_{i=1}^s (X-c_i) \big)$.
It follows that $\Gamma' = \bZ(G)$ where
\begin{equation} \label {23dvcrk9iiiifda;}
G = \textstyle \prod_{i=1}^s (X-c_i) \quad \text{or} \quad G = Y\textstyle \prod_{i=1}^s (X-c_i).
\end{equation}
First consider the case $G = \textstyle \prod_{i=1}^s (X-c_i)$;
then $\alpha=\alpha_1$,
because the first case of \eqref{23dvcrk9iiiifda;} can only happen in the first case of \eqref{86tg6g-----d33n}.
By \ref{ffj388383jrwejxk}, there is a commutative diagram
$$
\xymatrix{
{\aff^2} \ar[r]^{f'} \ar[d]_{\theta} & {\aff^2} \ar[d]^{\delta}  \\
{\aff^2}  \ar[r]_{ v } & {\aff^2}
}
$$
where $v \in \bV$, 
$\delta, \theta \in \AutA$ and $\delta$ is of the form
form $\delta(x,y) = (x, y-q(x))$ for some $q \in \bk[X]$.  Then
$f = \alpha_1 \circ f'
= \alpha_1 \circ \delta^{-1} \circ v \circ \theta 
\sim \alpha_1 \circ \delta^{-1} \circ v$.
Let $p \in \bk[X] \setminus \{0\}$ be such that $v(x,y) = (x,p(x)y)$, then
$$
(\alpha_1 \circ \delta^{-1} \circ v)(x,y) = \big( x( p(x)y+q(x)), p(x)y+q(x) \big),
$$
which shows that (a-ii) holds in this case.

Consider the second case, $G = Y\textstyle \prod_{i=1}^s (X-c_i)$.
Here, $\alpha$ may be either one of $\alpha_1, \alpha_2$.
By \ref{seur823412y8dshfjs}, there is a commutative diagram
$$
\xymatrix{
{\aff^2} \ar[rr]^{f'} \ar[d]_{\theta} && {\aff^2} \ar[d]^{T}  \\
{\aff^2} \ar[rr]_{ v\circ \gamma \circ h } && {\aff^2}
}
$$
where $(v,\gamma,h) \in \bV \times \bG \times \bH$,
$\theta \in \AutA$,
and $T \in \AutA$ is of the form $T(x,y)=(x-c,y)$, with $c \in \bk$.
Now $\alpha \circ T^{-1} = \nu \circ \alpha$ where $\nu \in \AutA$ is given by $\nu(x,y)=(x+cy,y)$.
Thus
$$
f
= \alpha \circ f'
= \alpha \circ T^{-1} \circ v\circ \gamma \circ h  \circ \theta
= \nu \circ \alpha \circ v\circ \gamma \circ h  \circ \theta
\sim
\alpha \circ v\circ \gamma \circ h,
$$ 
showing that (a-ii) holds in this case as well.
So (a-i) implies (a-ii).

\medskip
Let $\phi \in \bk[X] \setminus \{0\}$, $M =
\left( \begin{smallmatrix} i & j \\ k & \ell \end{smallmatrix} \right) \in \bM$
and $h_{m,p}(x,y) = (xy^m+p(y), y) \in \bH$,
where $m \in \Nat$ and $p(Y) \in \bk[Y]$ are such that $\deg p < m$.
As a preparation for the proof that (c-i) implies (c-ii), we first show:
\begin{gather}
\label{FirstClaim}
\begin{minipage}[t]{.9\textwidth}
\it If $\Cont( \gamma_M \circ h_{m,p} )$ is admissible, then
$\gamma_M \circ h_{m,p} \sim \gamma$ for some $\gamma \in \bG$.
\end{minipage} \\
\label{SecondClaim}
\begin{minipage}[t]{.9\textwidth}
\it If $\Cont( v_\phi \circ h_{m,p} )$ is admissible, then
$v_\phi \circ h_{m,p}$ is equivalent to 
an element of $\bV \cup \bG$.
\end{minipage}
\end{gather}
Observe:
\begin{equation}  \label{iduf23u48iqwhd}
(\gamma_M \circ h_{m,p})(x,y) =
\big( (xy^m+p(y))^i y^j,\, (xy^m+p(y))^k y^\ell \big).
\end{equation}
To prove \eqref{FirstClaim}, first consider the case $ik\neq 0$;
then \eqref{iduf23u48iqwhd} implies that $\bZ( XY^m + p(Y) )$
is a contracting curve (or a union of contracting curves)
of $\gamma_M \circ h_{m,p}$.
So, by the hypothesis of \eqref{FirstClaim},
each irreducible component of $\bZ( XY^m + p(Y) )$ has one
place at infinity.  The only way to achieve this is to have $p=0$,
in which case we have $h_{m,p} = \gamma_{M'}$ with $M'=
\left( \begin{smallmatrix} 1 & m \\ 0 & 1 \end{smallmatrix} \right)$.
Then  $\gamma_M \circ h_{m,p} = \gamma_N \in \bG$ with $N=MM'$,
so \eqref{FirstClaim} is true in this case.

Consider next the case where $ik=0$. Then $M \in \big\{
\left( \begin{smallmatrix} 0 & 1 \\ 1 & \ell \end{smallmatrix} \right),
\,
\left( \begin{smallmatrix} 1 & j \\ 0 & 1 \end{smallmatrix} \right) \big\}$
for some $j,\ell \in \Nat$.
If 
$M = \left( \begin{smallmatrix} 0 & 1 \\ 1 & \ell \end{smallmatrix} \right)$,
then 
$ (\gamma_M \circ h_{m,p})(x,y) = \big( y,\, (xy^m+p(y)) y^\ell \big)
= \big( y,\, xy^{m+\ell} + y^\ell p(y) \big) $,
which is equivalent to the birational morphism $(x,y) \mapsto 
( y,\, xy^{m+\ell} ) $, i.e., 
$\gamma_M \circ h_{m,p} \sim \gamma_N \in \bG$ with 
$N = \left( \begin{smallmatrix} 0 & 1 \\ 1 & m+\ell \end{smallmatrix} \right)$.
Similarly, if 
$M=\left( \begin{smallmatrix} 1 & j \\ 0 & 1 \end{smallmatrix} \right)$
then 
$\gamma_M \circ h_{m,p} \sim \gamma_N \in \bG$ with 
$N = \left( \begin{smallmatrix} 1 & m+j \\ 0 & 1 \end{smallmatrix} \right)$.
This completes the proof of \eqref{FirstClaim}.

To prove \eqref{SecondClaim},
we first note that if $m=0$ then 
$v_\phi \circ h_{m,p} = v_\phi \circ \id = v_\phi \in \bV$.
Likewise, if $\phi \in \bk^*$ then $v_\phi$ is an isomorphism,
so $v_\phi \circ h_{m,p} \sim h_{m,p} \sim \gamma_
{\left( \begin{smallmatrix} 1 & m \\ 0 & 1 \end{smallmatrix} \right)} \in \bG$.
So we may assume from now-on that $m>0$ and that $\phi$ has at least one root.

If $c \in \bk$ is a root of $\phi$ then
$\bZ( XY^m + p(Y)-c )$ is a union of contracting curves of 
$v_\phi \circ h_{m,p}$,
so, by the hypothesis of \eqref{SecondClaim},
each irreducible component of $\bZ( XY^m + p(Y)-c )$
has one place at infinity. As $m>0$, this implies that $p(Y)-c$ is the
zero polynomial, and this is true for each root $c$ of $\phi$.
So $\phi=a(X-c)^n$ for some $a \in \bk^*$ and $n \ge 1$, and
$h_{m,p}(x,y)=( xy^m+c,y )$.
Then $(v_\phi \circ h_{m,p})(x,y) = ( xy^m+c, a(xy^m)^n y )$,
which is equivalent to $(x,y) \mapsto  ( xy^m, (xy^m)^n y )$,
i.e., 
$v_\phi \circ h_{m,p} \sim \gamma_N\in\bG$ with 
$N =
\left( \begin{smallmatrix} 1 & m \\ n & mn+1 \end{smallmatrix} \right)$.
This proves \eqref{SecondClaim}.

To prove that (c-i) implies (c-ii),
we consider $f= v_\phi \circ \gamma_M \circ h$
for some $(\phi,M,h) \in (\bk[X]\setminus\{0\}) \times \bM \times \bH$,
we assume that $\Cont(f)$ is admissible, and we have to prove (c-ii).
We use the notation 
$M = 
\left( \begin{smallmatrix} i & j \\ k & \ell \end{smallmatrix} \right) \in \bM$
and $h(x,y) = (xy^m+p(y), y)$ where $m \in \Nat$ and $p(Y) \in \bk[Y]$
are such that $\deg p < m$.

The assumption that $\Cont(f)$ is admissible implies in particular:
\begin{equation} \label{difjwqkefalksmdf,m}
\text{Each contracting curve of $v_\phi \circ \gamma_M$
has one place at infinity.}
\end{equation}
Indeed, suppose that
$C \in \Cont( v_\phi \circ \gamma_M )$ has more than one place at infinity;
then, by \ref{dijfaksdjfall00000}\eqref{000003},
$C$ is not a missing curve of $h$ and consequently
there exists a curve $C' \subset \aff^2$ such that
$h(C')$ is a dense subset of $C$.
Then $C'$ is a contracting curve of $f= v_\phi \circ \gamma_M \circ h$
but has more than one place at infinity
(because it dominates a curve with more than one place at infinity).
This contradicts the assumption that $\Cont(f)$ is admissible,
so \eqref{difjwqkefalksmdf,m} is proved.

We claim:
\begin{equation}  \label{dfjq34r7834823ehasdh}
\textit{$ij=0$ or $\phi(X)=aX^n$, for some $a \in \bk^*$ and $n\in\Nat$}.
\end{equation}
Indeed, suppose that $\phi$ is not of the form $aX^n$ with $a \in \bk^*$ and
$n \in \Nat$;  then there exists $c\in\bk^*$ such
that $\phi(c)=0$. Then $\bZ( x^iy^j - c )$ is a contracting curve
of $v_\phi \circ \gamma_M$ and, if $ij\neq 0$,
this curve has more than one place at infinity,
contradicting \eqref{difjwqkefalksmdf,m}.
So \eqref{dfjq34r7834823ehasdh} is proved.

Consider the case where $\phi(X)=aX^n$.
Then $v_\phi = \theta \circ \gamma_{M_1}$ where $\theta \in \AutA$
and $M_1=
\left( \begin{smallmatrix} 1 & 0 \\ n & 1 \end{smallmatrix} \right)
\in \bM$.
Then $f= v_\phi \circ \gamma_M \circ h
= \theta \circ \gamma_{M_1} \circ \gamma_M \circ h
\sim \gamma_{M_1M} \circ h$, so \eqref{FirstClaim} implies that
$f \sim \gamma$ for some $\gamma \in \bG$, so we are done in this case.

There remains the case $ij=0$; here we have
$M \in \big\{
\left( \begin{smallmatrix} 0 & 1 \\ 1 & \ell \end{smallmatrix} \right),
\,
\left( \begin{smallmatrix} 1 & 0 \\ k & 1 \end{smallmatrix} \right) \big\}$
for some $k,\ell \in \Nat$.

If $M=\left( \begin{smallmatrix} 1 & 0 \\ k & 1 \end{smallmatrix} \right)$
then $\gamma_M = v_{(X^k)}$ so $f=v_\phi \circ v_{(X^k)} \circ h_{m,p}
= v_{\phi_1} \circ h_{m,p}$ where $\phi_1 = X^k\phi(X)$,
so \eqref{SecondClaim} implies that 
$f$ is equivalent to an element of $\bV \cup \bG$ (so we are done).

If $M=\left( \begin{smallmatrix} 0 & 1 \\ 1 & \ell \end{smallmatrix} \right)$
then $M=M_1M_2$ where 
$M_1=\left( \begin{smallmatrix} 1 & 0 \\ \ell & 1 \end{smallmatrix} \right)$
and $M_2=
\left( \begin{smallmatrix} 0 & 1 \\ 1 & 0 \end{smallmatrix} \right)$.
Now $\gamma_{M_2} = \tau$, where $\tau \in \AutA$ is defined
by $\tau(x,y)=(y,x)$, and $\gamma_{M_1} = v_{(X^\ell)}$.
So we have 
$$
f \sim f \circ \tau
= v_\phi \circ \gamma_{M_1} \circ \gamma_{M_2} \circ h_{m,p} \circ \tau
= ( v_\phi \circ v_{(X^\ell)} ) \circ (\tau \circ h_{m,p} \circ \tau)
= v_{\phi_1} \circ (\tau \circ h_{m,p} \circ \tau),
$$
where $\phi_1 = X^\ell\phi(X)$. We have
$(\tau \circ h_{m,p} \circ \tau)(x,y) = (x,yx^m+p(x))$,
so 
$$
(v_{\phi_1} \circ (\tau \circ h_{m,p} \circ \tau))(x,y) = 
\big(x, \phi_1(x)(yx^m+p(x)) \big)
= \big(x, x^m\phi_1(x) y+\phi_1(x)p(x) \big),
$$
which is equivalent to the birational morphism $(x,y) \mapsto
\big(x, x^m\phi_1(x) y \big) = v_{\psi}(x,y)$ with $\psi = X^m\phi_1$.
So $f \sim v_{\psi} \in \bV$ and we have shown that (c-i) implies (c-ii).
\end{proof}

\begin{corollary}  \label {d;ksdjfp9-90`20}
Let $f \in \Bir \aff^2$.
Suppose that all missing curves of $f$ are lines, and that these are simultaneously rectifiable.
Then there exists a coordinate system of $\aff^2$ with respect to which
the configuration of missing curves is one of the following:
\setlength{\unitlength}{.75mm}
\begin{enumerata}
\setlength{\itemsep}{3mm}

\item 
\raisebox{-25\unitlength}{
\fbox{\begin{picture}(45,35)
\put(8,8){\line(0,1){24}}
\put(16,8){\line(0,1){24}}
\put(24,20){\makebox(0,0){\dots}}
\put(32,8){\line(0,1){24}}
\put(8,6){\makebox(0,0)[t]{\scriptsize $L_1$}}
\put(16,6){\makebox(0,0)[t]{\scriptsize $L_2$}}
\put(32,6){\makebox(0,0)[t]{\scriptsize $L_s$}}
\end{picture}}}
\quad\begin{minipage}{.6\textwidth}
Parallel lines $L_1, \dots, L_s$ ($s\ge0$).
\end{minipage}

\item
\raisebox{-25\unitlength}{
\fbox{\begin{picture}(45,35)
\put(8,8){\line(0,1){24}}
\put(16,8){\line(0,1){24}}
\put(24,23){\makebox(0,0){\dots}}
\put(32,8){\line(0,1){24}}
\put(4,15){\line(1,0){32}}
\put(8,6){\makebox(0,0)[t]{\scriptsize $L_1$}}
\put(16,6){\makebox(0,0)[t]{\scriptsize $L_2$}}
\put(32,6){\makebox(0,0)[t]{\scriptsize $L_s$}}
\put(37,15){\makebox(0,0)[l]{\scriptsize $L_{0}$}}
\end{picture}}}
\quad\begin{minipage}{.6\textwidth}
Parallel lines $L_1,\dots,L_s$ ($s\ge1$), plus one line $L_0$ not parallel to $L_1,\dots,L_s$.
\end{minipage}

\item
\raisebox{-27\unitlength}{
\fbox{\begin{picture}(45,37)(-20,-17)
\put(-15,0){\line(1,0){30}}
\put(-10,-15){\line(2,3){20}}
\put(10,-15){\line(-2,3){20}}
\put(0,10){\makebox(0,0){\dots}}
\put(16,0){\makebox(0,0)[l]{\scriptsize $L_1$}}
\put(10,16){\makebox(0,0)[bl]{\scriptsize $L_2$}}
\put(-10,16){\makebox(0,0)[br]{\scriptsize $L_s$}}
\end{picture}}}
\quad\begin{minipage}{.6\textwidth}
Concurrent lines $L_1, \dots, L_s$ ($s \ge3$).
\end{minipage}

\item
\raisebox{-27\unitlength}{
\fbox{\begin{picture}(45,37)(-20,-17)
\put(-10,-15){\line(2,3){20}}
\put(10,-15){\line(-2,3){20}}
\put(0,10){\makebox(0,0){\dots}}
\put(-15,0){\line(1,0){30}}
\put(-15,-8){\line(1,0){30}}
\put(16,-8){\makebox(0,0)[l]{\scriptsize $L_0$}}
\put(16,0){\makebox(0,0)[l]{\scriptsize $L_1$}}
\put(10,16){\makebox(0,0)[bl]{\scriptsize $L_2$}}
\put(-10,16){\makebox(0,0)[br]{\scriptsize $L_s$}}
\end{picture}}}
\quad\begin{minipage}{.6\textwidth}
Concurrent lines $L_1, \dots, L_s$ ($s \ge3$)
plus one line $L_0$, where $L_0$ is parallel to
one of the concurrent lines.
\end{minipage}
\end{enumerata}
Conversely, each of the above configurations of lines occurs as the configuration
of missing curves of some $f \in \BirA$.
\end{corollary}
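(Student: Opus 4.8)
The plan is to recognize the hypothesis as weak admissibility and then read the answer off Proposition~\ref{dkfjaiuer834134k1j3}. First I would observe that, by Remark~\ref{ClearUpConfusion} together with Lemma~\ref{weak-dpfq32748rjkdkd}, the assumption that all missing curves of $f$ are lines which are simultaneously rectifiable is exactly assumption~(ii) of Proposition~\ref{dkfjaiuer834134k1j3}, i.e.\ that $\Miss(f)$ is weakly admissible. If $f$ is an isomorphism then $\Miss(f)=\emptyset$, which is configuration~(a) with $s=0$; so I may assume $f$ is not an isomorphism and apply Proposition~\ref{dkfjaiuer834134k1j3}(a). This yields a coordinate system $(X,Y)$ in which $\bigcup_{C\in\Miss(f)}C$ is the zero-set of either $XY^m\prod_{i=1}^n(X-a_i)$ or $X(X-1)^m\prod_{i=1}^n(Y-b_iX)$, with $m\in\{0,1\}$; since the missing curves are distinct, the $a_i$ are distinct and nonzero (resp.\ the $b_i$ are distinct).

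For the forward direction I would simply interpret these two polynomials geometrically. In the first case the components $\bZ(X),\bZ(X-a_i)$ are $n+1$ parallel vertical lines, to which $\bZ(Y)$ is added transversally when $m=1$; this is configuration~(a) (if $m=0$) or~(b) (if $m=1$). In the second case $\bZ(X)$ and the lines $\bZ(Y-b_iX)$ all pass through the origin and are pairwise non-parallel, so they form $n+1$ concurrent lines, and when $m=1$ the line $\bZ(X-1)$ is added parallel to $\bZ(X)$; thus for $n\ge2$ this is configuration~(c) (if $m=0$) or~(d) (if $m=1$). The only point needing care is that for $n\le1$ the concurrent bundle has at most two lines and so does not literally meet the requirement $s\ge3$ of~(c),(d); in those degenerate cases I would note by direct inspection that the configuration is again of type~(a) or~(b) (e.g.\ $n=0,m=1$ gives two parallel lines; $n=1,m=1$ gives two parallels $\bZ(X),\bZ(X-1)$ crossed by the transversal $\bZ(Y-b_1X)$). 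Hence every such configuration is one of~(a)--(d).

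For the converse I would exhibit an explicit $f\in\BirA$ realizing each type, computing missing curves with Lemma~\ref{Doifqrqkkdsjf8}(b) (valid since $\aff^2$ is affine), which gives $\Gamma_{g\circ f}=\Gamma_g\cup\overline{g(\Gamma_f)}$. Writing $\alpha_1(x,y)=(xy,y)$, $\alpha_2(x,y)=(x(1-y),1-y)$ and $v_\phi(x,y)=(x,\phi(x)y)$, one checks $\Miss(v_\phi)=\setspec{\bZ(X-c)}{\phi(c)=0}$. Then:
\begin{itemize}
\item $v_\phi$ with $\phi=\prod_{i=1}^s(X-a_i)$ (distinct roots) gives $s$ parallel lines --- type~(a);
\item $v_\phi\circ\alpha_1$ gives these verticals together with $\overline{v_\phi(\bZ(Y))}=\bZ(Y)$ --- type~(b);
\item $\alpha_1\circ v_\phi$ gives $\bZ(Y)$ together with $\overline{\alpha_1(\bZ(X-c_i))}=\bZ(X-c_iY)$, all through the origin --- type~(c) once $\phi$ has at least two roots.
\end{itemize}
The delicate case is~(d): taking $f=\alpha_2\circ v_\phi\circ\alpha_1$ with $\phi=\prod_{i=1}^s(X-c_i)$ and $s\ge2$, the inner map $v_\phi\circ\alpha_1$ has missing set $\bZ(Y)\cup\bigcup_i\bZ(X-c_i)$, and the key computation $\alpha_2(\bZ(Y))=\bZ(Y-1)$ while $\alpha_2(\bZ(X-c_i))=\bZ(X-c_iY)$ shows that the horizontal missing line is carried to a line \emph{parallel} to $\bZ(Y)$, whereas the verticals become concurrent at the origin; combined with $\Miss(\alpha_2)=\bZ(Y)$ this yields $s+1\ge3$ concurrent lines plus one line parallel to one of them --- type~(d).

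The main obstacle is the converse, specifically producing type~(d): one must arrange for exactly one extra line parallel to, but distinct from, a member of the concurrent bundle, and the asymmetric behaviour of $\alpha_2$ on horizontal versus vertical lines is precisely what achieves this. A secondary nuisance is the bookkeeping of the degenerate small-$n$ cases in the forward direction, where configurations~(c),(d) collapse into~(a),(b); I would dispose of these by inspection rather than by a uniform argument.
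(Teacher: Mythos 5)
Your proof is correct, and its forward direction takes a genuinely different route from the paper's, while your converse is essentially the paper's own construction. The paper proves the forward implication by invoking the main Theorem~\ref{du-239412948ynf}(a): weak admissibility of $\Miss(f)$ gives the normal forms of $f$ up to equivalence, and the four configurations are then read off from those normal forms (a step the paper leaves implicit, since one must still compute the missing curves of $\alpha_i^m \circ v \circ \gamma \circ h$ and of $(x,y)\mapsto (x(p(x)y+q(x)),p(x)y+q(x))$). You instead quote Proposition~\ref{dkfjaiuer834134k1j3}(a) directly, whose conclusion is already a statement about $\bigcup_{C\in\Miss(f)}C$ being $\bZ\big(XY^m\prod_{i=1}^n(X-a_i)\big)$ or $\bZ\big(X(X-1)^m\prod_{i=1}^n(Y-b_iX)\big)$; this makes the forward direction independent of the main theorem (Proposition~\ref{dkfjaiuer834134k1j3} sits upstream of it in the paper's logic), at the modest cost of the degenerate bookkeeping you correctly flag, namely that for $n\le 1$ the ``concurrent'' form collapses into types (a)/(b), which you settle by inspection. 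Your identification of the hypothesis with weak admissibility via Remark~\ref{ClearUpConfusion} and Lemma~\ref{weak-dpfq32748rjkdkd} is exactly right. For the converse, your maps coincide with the paper's: the paper realizes (a) by $v_\phi$, (b) by $(x,y)\mapsto(xy,\phi(xy)y)$, which is precisely $v_\phi\circ\alpha_1$, (c) by $\alpha_1\circ g$ with $g$ of type (a), i.e.\ $\alpha_1\circ v_\phi$, and (d) by $\alpha_2\circ g$ with $g$ of type (b), i.e.\ $\alpha_2\circ v_\phi\circ\alpha_1$, with missing curves computed by Lemma~\ref{Doifqrqkkdsjf8}\eqref{qhv6ghqwka7} exactly as you do; your computations $\alpha_2(\bZ(Y))=\bZ(Y-1)$ and $\alpha_2(\bZ(X-c_i))=\bZ(X-c_iY)$ are correct and are indeed the crux of case (d).
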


The proof below gives, in each of the cases (a)--(d), an example of an $f \in \BirA$ having
the desired configuration of missing curves.

\begin{proof}[Proof of \ref{d;ksdjfp9-90`20}]
The hypothesis on $f$ is that $\Miss(f)$ is weakly admissible, so $f$ is described by
part \mbox{(a-ii)} of Theorem~\ref{du-239412948ynf}; it follows that $\Miss(f)$ must
be one of the configurations (a)--(d).
Note that $\Miss(f)$ is admissible in cases (a) and (b).
In cases (c) and (d), $\Miss(f)$ is weakly admissible but not admissible.

Conversely, consider the configurations of lines (a)--(d).
In each of the four cases we may 
choose a coordinate system $\cgoth = (X,Y)$ of $\aff^2$ with respect to which the configuration
of lines is $\bZ(F)$, where:
$$
F = \begin{cases}
\prod_{i=1}^s (X-c_i) & \text{in case (a),} \\
Y\prod_{i=1}^{s} (X-c_i) & \text{in case (b),} \\
Y\prod_{i=1}^{s-1} (X-c_iY) & \text{in case (c),} \\
Y(Y-1)\prod_{i=1}^{s-1} (X-c_iY) & \text{in case (d),}
\end{cases}
$$
where $c_1, \dots, c_s$ (resp.\ $c_1, \dots, c_{s-1}$) are distinct elements of $\bk$ in cases (a) and (b)
(resp.\ in cases (c) and (d)).
Let us exhibit, in each case, an $f \in \BirA$ such that the union of all missing curves of $f$ is $\bZ(F)$.
In cases (a) and (b), choose a univariate polynomial $\phi \in \bk[t]$ whose roots are exactly $c_1, \dots, c_s$,
and define $f \in \BirA$ by
$$
f(x,y) = \begin{cases}
(x, \phi(x)y), & \text{in case (a),} \\
(xy, \phi(xy)y), & \text{in case (b).} 
\end{cases}
$$
Then the union of the missing curves of $f$ is $\bZ(F)$, as desired.
In cases (c) and (d), first choose $g \in \BirA$ such that the union of the missing curves of $g$ is $\bZ(G)$,
where 
$$
G = \begin{cases}
\prod_{i=1}^{s-1} (X-c_i) & \text{in case (c),} \\
Y\prod_{i=1}^{s-1} (X-c_i) & \text{in case (d)}
\end{cases}
$$
(we know that $g$ exists by cases (a) and (b)).
Then define
$$
f = \begin{cases}
\alpha_1 \circ g & \text{in case (c),} \\
\alpha_2 \circ g  & \text{in case (d),} 
\end{cases}
$$
where $\alpha_1$ and $\alpha_2$ are defined in the statement of Theorem~\ref{du-239412948ynf}.
It follows from \ref{Doifqrqkkdsjf8}\eqref{qhv6ghqwka7} that the union of the missing curves of $f$ is $\bZ(F)$.
\end{proof}

\section{Some aspects of the monoid $\BirA$}
\label {SomeaspectsofthemonoidBirA}

Let $\bk$ be an algebraically closed field and $\aff^2=\aff^2_\bk$, and
consider the non commutative monoid $\BirA$ defined in the introduction.
Note that this is a cancellative monoid,
since it is included in the group of birational automorphisms of $\proj^2$.

In view of \ref{1dijfaksdjfall00100} and \ref{dijfaksdjfall00000}\eqref{000002},
it is clear that each non invertible element of $\BirA$ is a composition of finitely many irreducible elements.
In other words,
\begin{equation*}
\textit{the monoid $\BirA$ has factorizations into irreducibles.}
\end{equation*}
Essentially nothing is known regarding uniqueness of factorizations.\footnote{%
We do know that $\BirA$ is not a ``unique factorization monoid'' in the sense of \cite{John_uf-monoids71},
but this by no means settles the question of uniqueness of factorizations in $\BirA$.
Indeed, there are several non equivalent definitions of what one might mean by ``uniqueness of factorization'' in 
non commutative monoids, and the one used in  \cite{John_uf-monoids71} seems to be particularly inadequate in
the case of $\BirA$.}

It is natural to ask whether one can find all irreducible elements of $\BirA$ up to equivalence.
However, considering the examples given in 
\cite{Dai:bir}, \cite{Dai:trees} and \cite{CassouRussell:BirEnd}
and certain facts such as  \cite[4.12]{Dai:bir},
one gets the impression that the irreducible endomorphisms might be too numerous and too diverse to be listed.
The first part of the present section gives some simple observations
(\ref{.2938f9283w9fq0349hfio}--\ref{89j32467ewqfds}) that strengthen that impression.

\medskip
Given $f,g \in \BirA$, let us write $f \mid g$ if there exist $u,v \in \BirA$ such that $u\circ f \circ v = g$.
By a {\it prime element\/} of $\BirA$, we mean a non invertible element $p$ satisfying
$$
\text{for all $f,g \in \BirA$,\ \ $p \mid (g \circ f) \Rightarrow p \mid f \text{ or } p \mid g$.}
$$
It follows from \ref{1dijfaksdjfall00100} and \ref{dijfaksdjfall00000}\eqref{000002} that every
prime element of $\BirA$ is irreducible. 
It is natural to ask whether the converse is true, and in particular whether SACs are prime
(SACs are certainly irreducible).
These questions are open; we don't even know if there exists a prime element in $\BirA$.

\medskip
We say that a submonoid $\Meul$ of $\BirA$ is {\it factorially closed in $\BirA$} if
the conditions $f,g \in \BirA$ and $g \circ f \in \Meul$ imply $f,g \in \Meul$.
It is natural
to ask whether $\Aeul$ is factorially closed in $\BirA$,
where $\Aeul$ is the submonoid of $\BirA$ generated by SACs and automorphisms.\footnote{The question is natural
in view of the question whether SACs are prime and in view of the following trivial fact:
let $P$ be a set of prime elements in a commutative and cancellative monoid $\Neul$,
and let $\overline P$ be the submonoid of $\Neul$
generated by $P$ and all invertible elements of $\Neul$;
then $\overline P$ is factorially closed in $\Neul$.}
The main result of this section, Theorem~\ref{9kjxt12544d2jhdbhdfa384732}, states that
$\Aeul$ is indeed factorially closed in $\BirA$.

\begin{smallremark}
It is obvious that the only irreducible elements of $\Aeul$ are the SACs,
that each non invertible element of $\Aeul$ is a composition of irreducible elements,
and that $\Aeul$ has the following ``unique factorization'' property:
if $x_1, \dots, x_m, y_1, \dots, y_n$ are irreducible elements of $\Aeul$ such that
$x_1 \circ \cdots \circ x_m = y_1 \circ \cdots \circ y_n$, then $m=n$ and for each $i = 1, \dots, n$
we have $x_i = u_i \circ y_i \circ v_i$ for some invertible elements $u_i, v_i \in \Aeul$.
(However, it is easy to see that $\Aeul$ is not a unique factorization monoid in the sense
defined in \cite{John_uf-monoids71}.)
\end{smallremark}

\section*{Irreducible elements and generating sets}

We write $[f]$ for the equivalence class of an element $f$ of $\BirA$.

\begin{lemma} \label {.2938f9283w9fq0349hfio}
$ \big| \setspec{ [f] }{ \text{$f$ is an irreducible element of $\BirA$} } \big| = |\bk|$. 
\end{lemma}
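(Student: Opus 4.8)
The plan is to prove the two inequalities $\big| \{ [f] : f \text{ is irreducible} \} \big| \le |\bk|$ and $\big| \{ [f] : f \text{ is irreducible} \} \big| \ge |\bk|$ separately. Since $\bk$ is algebraically closed it is infinite, so $|\bk| \ge \aleph_0$ and both bounds are meaningful. The upper bound is the easy, essentially formal, half; the lower bound is where the real content lies.

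For the upper bound, I would fix a coordinate system, so that every $f \in \BirA$ is given by a pair of polynomials $(P,Q) \in A \times A$ with $A = \kk2$ (cf.\ \ref{43fg6gxcfzx}); thus $|\BirA| \le |A|^2$. Since $\bk$ is infinite one has $|A| = |\bk|$: the ring $A$ is a countable union of the spaces of polynomials of degree $\le d$, and each such space, being a finite-dimensional $\bk$-vector space of dimension $N=\binom{d+2}{2}$, has cardinality $|\bk|^{N} = |\bk|$, so $|A| = \aleph_0 \cdot |\bk| = |\bk|$. Hence the number of equivalence classes of irreducible elements is at most $|\BirA| \le |\bk|^2 = |\bk|$.

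For the lower bound I would exhibit a family $(f_\lambda)_{\lambda \in \Lambda}$ of pairwise inequivalent irreducible elements with $|\Lambda| = |\bk|$, where $\Lambda \subseteq \bk$ is cofinite (so $|\Lambda| = |\bk|$). The natural candidate is a one-parameter family of Russell-type examples, in the spirit of the irreducible element of \cite[4.7]{Dai:bir} and the examples of \cite[4.12]{Dai:bir}. Each $f_\lambda$ is to be arranged so that the union $\Gamma_\lambda$ of its missing curves has a distinguished multiple point $P_\lambda$ at which four of the branches have four \emph{distinct} tangent directions, whose cross-ratio equals a fixed injective-up-to-$S_3$ function of $\lambda$. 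Crucially the configuration must \emph{not} be weakly admissible: by Theorem~\ref{du-239412948ynf}(a) every $f$ with $\Miss(f)$ weakly admissible is a composition of SACs and hence, unless it is a single SAC, reducible. Including a genuine curve (say a parabola) among the four branches both destroys simultaneous rectifiability (cf.\ \ref{ClearUpConfusion}) and leaves room for $f_\lambda$ to be irreducible. The separating invariant is this cross-ratio: if $u \circ f_\lambda \circ v = f_\mu$ with $u,v \in \AutA$, then $\Miss(f_\mu) = u(\Miss(f_\lambda))$, so $u$ maps $\Gamma_\lambda$ isomorphically onto $\Gamma_\mu$, carries $P_\lambda$ to $P_\mu$, and carries the four branches at $P_\lambda$ onto those at $P_\mu$. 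The differential $d u_{P_\lambda}$ is a linear isomorphism of tangent spaces, inducing a projective isomorphism of the projectivized tangent directions, and projective isomorphisms preserve cross-ratio in every characteristic; hence $f_\lambda \sim f_\mu$ forces the two cross-ratios to agree up to the finite $S_3$-ambiguity from unordering the branches. Thus $\lambda \mapsto [f_\lambda]$ is finite-to-one, and as $\lambda$ ranges over $\Lambda$ the cross-ratio takes $|\bk|$ distinct values, producing $|\bk|$ distinct classes. Together with the upper bound this yields the stated equality.

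The main obstacle is the construction and verification of this family: one must produce, for all but finitely many $\lambda$, an element $f_\lambda \in \BirA$ that is genuinely \emph{irreducible} and whose missing-curve configuration realizes the four prescribed tangent directions at a single fundamental point. Checking irreducibility is the delicate step — precisely where the machinery of \cite{Dai:bir} and \cite{Dai:trees} (and presumably the examples underlying \cite[4.12]{Dai:bir}) is required — whereas the cross-ratio invariance argument above is routine once the family is in hand.
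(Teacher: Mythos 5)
Your upper bound is fine (and is indeed the easy half, which the paper doesn't even bother to spell out), and your separating-invariant argument — equivalence $u \circ f_\lambda \circ v = f_\mu$ forces $u(\Miss(f_\lambda)) = \Miss(f_\mu)$, and an automorphism induces a projective isomorphism on projectivized tangent spaces, hence preserves cross-ratios up to the $S_3$-ambiguity — is sound. But the proposal has a genuine gap, and you name it yourself: you never construct the family $(f_\lambda)$, and in particular you never establish that any element realizing your prescribed missing-curve configuration exists, let alone that it is irreducible. That existence-plus-irreducibility statement is the entire content of the lemma; everything else (cardinality bookkeeping, invariance of the separating invariant) is routine. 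A proof that defers exactly this step to "the machinery of \cite{Dai:bir} and \cite{Dai:trees}" is a strategy outline, not a proof: there is no result quoted in this paper, nor an obvious one in \cite{Dai:bir}, that hands you an irreducible $f$ whose missing curves consist of four branches through one point with prescribed distinct tangent directions (one of them a parabola). Realizability of a prescribed configuration as $\Miss(f)$ for irreducible $f$ is severely constrained (compare Theorem~\ref{du-239412948ynf} and Remark~\ref{ClearUpConfusion}), so existence cannot be waved through.

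For comparison, the paper closes precisely this gap by citing a ready-made construction: by \cite[4.13]{Dai:bir} (taking $m=3$ and $\delta_1=\cdots=\delta_{q-1}=0$), for any distinct $a_1,\dots,a_q \in \bk^*$ there is an \emph{irreducible} $f \in \BirA$ with $\Miss(f) = \{C_{a_1},\dots,C_{a_q}\}$, where $C_a = \bZ\big(aY^2(Y-1)+X\big)$. The separating invariant is then not a cross-ratio but simply the $\AutA$-orbit of the union $C_{a_1} \cup C_{a_2} \cup C_{a_3}$: the paper defines the corresponding equivalence relation $\approx$ on triples $(a_1,a_2,a_3)$ and reduces the lemma to the elementary (if tedious) fact that the set of $\approx$-classes has cardinality $|\bk|$. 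So the two approaches share the same architecture — a $|\bk|$-parameter family of irreducibles distinguished by an invariant of the missing-curve configuration — but the paper's version works because the existence and irreducibility of the family are imported from an explicit prior construction, which is exactly the ingredient your proposal lacks.
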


\begin{proof}
Fix a coordinate system $(X,Y)$ of $\aff^2$. 
For each $a \in \bk^*$, let $C_a \subset \aff^2$ be the zero-set of\ \ $aY^2(Y-1)+X \in \bk[X,Y]$.

Define
$U = \setspec{(a_1,a_2,a_3)\in\bk^3}{ \text{$a_1,a_2,a_3$ are distinct and nonzero} }$.
Define an equivalence relation $\approx$ on the set $U$ by declaring that 
$(a_1,a_2,a_3) \approx (b_1,b_2,b_3)$ iff there exists $\theta \in \AutA$ satisfying
$\theta( C_{a_1} \cup  C_{a_2} \cup  C_{a_3} ) = C_{b_1} \cup  C_{b_2} \cup  C_{b_3}$.
The reader may check\footnote{This is a tedious exercise. We leave it to the reader because it is
completely elementary and has nothing to do with the subject matter of this paper.}
that the set $U/\!\approx$ of equivalence classes has cardinality $|\bk|$.

Given $q\ge2$ and distinct elements $a_1, \dots, a_q \in \bk^*$, there exists 
an irreducible element $f \in \BirA$ 
such that $\Miss(f) = \{ C_{a_1}, \dots, C_{a_q} \}$ and $n(f) = q+2$
(to see this, set $m=3$ and $\delta_1 = \cdots = \delta_{q-1}=0$ in \cite[4.13]{Dai:bir}).\footnote{Note that 
in Example 4.13 of \cite{Dai:bir} one has $\Miss(f) = \{C_1, \dots, C_q\}$.
This doesn't seem to be stated explicitly, but it is clear if one reads the construction.}
In particular,
for each $\mathbf{a} = (a_1,a_2,a_3) \in U$ there exists 
an irreducible $f_\mathbf{a} \in \BirA$
such that $\Miss(f_\mathbf{a}) = \{ C_{a_1}, C_{a_2}, C_{a_3} \}$.
If $\mathbf{a},\mathbf{b} \in U$
are such that $f_\mathbf{a} \sim f_\mathbf{b}$ then there exist $\theta,\theta' \in \AutA$ satisfying
$\theta \circ f_\mathbf{a} = f_\mathbf{b} \circ \theta'$;  then 
$\theta( C_{a_1} \cup  C_{a_2} \cup  C_{a_3} ) = C_{b_1} \cup  C_{b_2} \cup  C_{b_3}$,
so $\mathbf{a} \approx \mathbf{b}$.
By the preceding paragraph we get $| \setspec{ [f_\mathbf{a}] }{ \mathbf{a} \in U } | = |\bk|$,
from which the desired conclusion follows.
\end{proof}

\begin{lemma}  \label {92389h32fhfqfqasde}
For any subset $S$ of $\BirA$, the following are equivalent:
\begin{enumerate}

\item[(i)] $\AutA \cup S$ is a generating set for the monoid $\BirA$;
\item[(ii)] for each irreducible $f \in \BirA$, $[f] \cap S \neq \emptyset$.
\end{enumerate}
\end{lemma}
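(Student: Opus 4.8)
The plan is to prove both implications directly from the definition of irreducibility, together with the fact (recorded at the start of this section, and following from \ref{1dijfaksdjfall00100} and \ref{dijfaksdjfall00000}\eqref{000002}) that every non-invertible element of $\BirA$ is a finite composition of irreducible elements. Two elementary observations will be used throughout: first, that the invertible elements of $\BirA$ are exactly the elements of $\AutA$; second, that irreducibility is stable under pre- and post-composition by automorphisms, i.e.\ if $f$ is irreducible and $u,v \in \AutA$ then $u \circ f \circ v$ is irreducible. I would record the second fact at the outset, since it is immediate from the definition: $u \circ f \circ v$ is non-invertible, and any factorization $u \circ f \circ v = h \circ g$ yields $f = (u^{-1}\circ h) \circ (g \circ v^{-1})$, so one of $u^{-1}\circ h$, $g\circ v^{-1}$ --- equivalently one of $h$, $g$ --- is invertible.

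For (ii)$\Rightarrow$(i), I would take an arbitrary $f \in \BirA$. If $f \in \AutA$ there is nothing to prove, so assume $f$ is non-invertible and write $f = f_1 \circ \cdots \circ f_r$ with each $f_i$ irreducible. By (ii) each $f_i$ is equivalent to some $s_i \in S$, say $u_i \circ f_i \circ v_i = s_i$ with $u_i, v_i \in \AutA$; hence $f_i = u_i^{-1}\circ s_i \circ v_i^{-1}$ is a composition of elements of $\AutA \cup S$. Substituting, $f$ itself is a composition of elements of $\AutA \cup S$, which is exactly (i).

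For (i)$\Rightarrow$(ii), let $f$ be irreducible and use (i) to write $f = w_1 \circ \cdots \circ w_k$ with each $w_i \in \AutA \cup S$. Since $f$ is non-invertible, at least one factor is non-invertible, hence lies in $S$; let $w_j$ be the \emph{first} such factor. Then $a := w_1 \circ \cdots \circ w_{j-1} \in \AutA$ (the empty product being the identity), and with $b := w_{j+1}\circ \cdots \circ w_k$ we get $f = a \circ w_j \circ b$. By the stability observation, $a^{-1}\circ f = w_j \circ b$ is again irreducible; applying the definition of irreducibility to this factorization, and using that $w_j$ is non-invertible, forces $b \in \AutA$. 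Therefore $f = a \circ w_j \circ b$ with $a, b \in \AutA$, so $a^{-1}\circ f \circ b^{-1} = w_j$ gives $f \sim w_j$, and thus $w_j \in [f]\cap S$, proving (ii).

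These arguments are short and essentially formal; the one point requiring care --- and the place I expect to be the main obstacle --- is the forward direction's need to isolate a \emph{single} non-invertible generator out of a composition that may a priori involve several. The device of singling out the first non-invertible factor and absorbing the preceding automorphism factors into $a$ is what makes the definition of irreducibility directly applicable: it reduces the arbitrary generating expression to a two-term factorization $w_j \circ b$ to which irreducibility applies verbatim. Note that no use of the cancellativity of $\BirA$ or of the grading by $n(\cdot)$ is needed for this step; the grading enters only through the cited existence of factorizations into irreducibles used in (ii)$\Rightarrow$(i).
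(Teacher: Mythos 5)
Your proof is correct and takes essentially the same approach as the paper: your (ii)$\Rightarrow$(i) direction is identical (factor into irreducibles and replace each by an equivalent element of $S$), and your (i)$\Rightarrow$(ii) direction is a careful spelling-out of the paper's one-line assertion that exactly one factor of the generating expression lies outside $\AutA$. If anything, your device of isolating the \emph{first} non-invertible factor and applying irreducibility to the two-term factorization $w_j \circ b$ is slightly more self-contained, since the paper's ``exactly one'' claim tacitly uses that a composition in $\BirA$ is invertible only if all its factors are.
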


\begin{proof}
Suppose that $S$ satisfies (i) and consider an irreducible $f \in \BirA$. By (i), 
$$
f = g_1 \circ \cdots \circ g_n \text{\ \ for some finite subset $\{g_1, \dots, g_n \}$ of $\AutA \cup S$}.
$$
By irreducibility of $f$, exactly one element $g_i$ of $\{g_1, \dots, g_n \}$  is not in $\AutA$
(consequently, $g_i \in S$).
So $f \sim g_i \in S$, which proves that $S$ satisfies (ii).

Conversely, suppose that (ii) holds and consider $h \in \BirA$;  we claim that 
$$
h = g_1 \circ \cdots \circ g_N \text{\ \ for some finite subset $\{g_1, \dots, g_N \}$ of $\AutA \cup S$}.
$$
This is clear if $h \in \AutA$, so assume that $h \notin \AutA$.
Then $h = f_1 \circ \dots \circ f_n$ for some finite collection $\{ f_1, \dots, f_n \}$ of irreducible elements
of $\BirA$ (existence of a factorization into irreducibles is a consequence of \ref{1dijfaksdjfall00100}).
For each $i \in \{1, \dots, n\}$, we have $[f_i] \cap S \neq \emptyset$, so
$f_i = u_i \circ s_i \circ v_i$ for some $s_i \in S$ and $u_i,v_i \in \AutA$. Then
$$
h = (u_1 \circ s_1 \circ v_1) \circ  \cdots \circ (u_n \circ s_n \circ v_n) = g_1 \circ \cdots \circ g_N
$$
where $\{g_1, \dots, g_N \} \subset \AutA \cup S$. This proves (i).
\end{proof}

\begin{corollary} \label {2938923d89h209jdhvas.}
Let $S$ be a subset of $\BirA$ such that $\AutA \cup S$ is a generating set for the monoid $\BirA$.
Then $|S| = |\bk|$.
\end{corollary}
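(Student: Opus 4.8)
The plan is to pin down $|S|$ by squeezing it between two copies of $|\bk|$: a lower bound obtained by combining the two preceding lemmas, and an upper bound obtained from the fact that $\BirA$ embeds into a polynomial ring of cardinality $|\bk|$.

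First I would invoke Lemma~\ref{92389h32fhfqfqasde}. Since $\AutA \cup S$ generates $\BirA$, condition (i) of that lemma holds, hence so does condition (ii): every equivalence class $[f]$ of irreducible elements of $\BirA$ satisfies $[f] \cap S \neq \emptyset$. Choosing, for each such class, an element $s_{[f]} \in [f] \cap S$, and using that distinct equivalence classes are disjoint (the relation $\sim$ partitions $\BirA$, and equivalence preserves irreducibility), yields an injection from the set of irreducible equivalence classes into $S$. Therefore $|S| \ge \big| \setspec{ [f] }{ \text{$f$ irreducible} } \big|$, and by Lemma~\ref{.2938f9283w9fq0349hfio} the right-hand side equals $|\bk|$. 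This gives $|S| \ge |\bk|$.

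Next I would bound $|S|$ from above by $|\BirA|$. Fixing a coordinate system $(X,Y)$ identifies each birational endomorphism of $\aff^2$ with a pair $(P,Q) \in \bk[X,Y] \times \bk[X,Y]$, so $|\BirA| \le |\bk[X,Y]|^2$. Since $\bk$ is algebraically closed it is infinite; then $\bk[X,Y]$ is a countable union of the finite-dimensional spaces of polynomials of bounded degree, each of cardinality $|\bk|$, whence $|\bk[X,Y]| = \aleph_0 \cdot |\bk| = |\bk|$ and $|\BirA| \le |\bk|$. As $S \subseteq \BirA$, this gives $|S| \le |\bk|$. Combining with the lower bound yields $|S| = |\bk|$, as claimed.

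The only step carrying any real content is the lower bound, which rests entirely on Lemma~\ref{.2938f9283w9fq0349hfio}; everything else is a routine cardinality count. In particular the upper bound is immediate once one remarks that an algebraically closed field is infinite, so I do not expect any genuine obstacle beyond citing the earlier results correctly.
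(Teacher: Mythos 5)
Your proof is correct and follows essentially the same route as the paper, which simply deduces the corollary from Lemma~\ref{92389h32fhfqfqasde} (every irreducible equivalence class meets $S$) and Lemma~\ref{.2938f9283w9fq0349hfio} (there are $|\bk|$ such classes). The only difference is that you spell out the routine cardinality bookkeeping --- the injection from classes into $S$ and the upper bound $|S| \le |\Bir(\aff^2)| \le |\bk|$ --- which the paper leaves implicit.
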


\begin{proof}
Follows from  \ref{.2938f9283w9fq0349hfio} and \ref{92389h32fhfqfqasde}.
\end{proof}

\begin{remark}
Let $f \in \BirA$ and let $\gamma=(X,Y)$ be a coordinate system of $\aff^2$.
Then $f : \aff^2 \to \aff^2$ is given by $f(x,y) = (u(x,y), v(x,y))$ for some polynomials $u,v \in \bk[X,Y]$.
We define $\deg_\gamma f = \max( \deg_\gamma u,  \deg_\gamma v )$. 
We may also define $\deg f$ to be the minimum of $\deg_\gamma f$ for $\gamma$ ranging over the set of 
coordinate systems of $\aff^2$. 
Then 
\begin{equation} \label {73ffj918}
\deg f \, \ge \, \frac{ c(f) + 2 }{2}.
\end{equation}
Indeed, if $F_1, \dots, F_c \in \bk[X,Y]$ are irreducible polynomials whose zero-sets are the 
contracting curves of $f$ (so $c(f)=c$) then
the jacobian determinant of $(u,v)$ with respect to $(X,Y)$ is divisible by $\prod_{i=1}^c F_i$.
This implies that $\deg_\gamma f \ge (c+2)/2$, where the right hand side is independent from $\gamma$.
Statement \eqref{73ffj918} follows.
\end{remark}

\begin{corollary}\footnote{This result answers a question posed by Patrick Popescu-Pampu.}
\label {89j32467ewqfds}
Let $S$ be a subset of $\BirA$ such that $\AutA \cup S$ is a generating set for the monoid $\BirA$.
Then $\setspec{ \deg f }{ f \in S }$ is not bounded.
\end{corollary}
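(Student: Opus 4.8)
The plan is to assemble three facts that are already in place: the generating-set criterion of Lemma~\ref{92389h32fhfqfqasde}, the lower bound $\deg f \ge (c(f)+2)/2$ recorded in~\eqref{73ffj918}, and the existence of irreducible elements of $\BirA$ with arbitrarily many contracting curves. The key observation that makes the argument clean is that I never need to know whether $\deg$ is an invariant of equivalence: it suffices to bound $\deg s$ from below through the \emph{equivalence-invariant} quantity $c(s)$.

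Concretely, I would first invoke Lemma~\ref{92389h32fhfqfqasde}: since $\AutA \cup S$ generates $\BirA$, every equivalence class of irreducible elements of $\BirA$ meets $S$. Next, for each integer $q \ge 2$ I would produce an irreducible $f \in \BirA$ with $c(f)=q$. This is exactly what the construction used in the proof of Lemma~\ref{.2938f9283w9fq0349hfio} provides: choosing $q$ distinct nonzero scalars $a_1, \dots, a_q \in \bk^*$ (possible because $\bk$, being algebraically closed, is infinite), that construction yields, via \cite[4.13]{Dai:bir}, an irreducible $f$ with $\Miss(f)=\{C_{a_1}, \dots, C_{a_q}\}$, whence $q(f)=q$ and therefore $c(f)=q$ by \ref{dijfaksdjfall00000}\eqref{000001}. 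Applying Lemma~\ref{92389h32fhfqfqasde} to this $f$, there is some $s \in S$ with $s \sim f$; since $c$ is invariant under equivalence (\ref{dkfjqp923p1`2m3k23}), we get $c(s)=c(f)=q$, and then~\eqref{73ffj918} gives $\deg s \ge (q+2)/2$ directly. Letting $q$ range over all integers $\ge 2$, this exhibits elements of $S$ of arbitrarily large degree, so $\setspec{\deg f}{f \in S}$ is unbounded.

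The genuine content of the statement is thus borrowed from the earlier results, and the only places demanding care are bookkeeping points rather than a real obstacle. I would double-check two things: first, that the example of \cite[4.13]{Dai:bir} indeed realizes $q(f)$, and hence $c(f)$, as an arbitrary integer $\ge 2$ (this relies precisely on $\bk$ being infinite); and second, that the notion of equivalence of elements of $\BirA$ used here, namely $u \circ f \circ v = g$ with $u,v \in \AutA$, agrees with the notion of equivalence of birational morphisms of~\ref{idjfawe8738324} in the case $X=Y=\aff^2$, so that the invariance of $c$ recorded in~\ref{dkfjqp923p1`2m3k23} legitimately yields $c(s)=c(f)$. With those checks, the conclusion follows without any further computation.
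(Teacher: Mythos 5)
Your proposal is correct and follows essentially the same route as the paper's proof: both combine Lemma~\ref{92389h32fhfqfqasde}, the existence (via \cite[4.13]{Dai:bir}) of irreducible elements with arbitrarily many contracting curves, the equivalence-invariance of $c$, and the bound~\eqref{73ffj918}. The only cosmetic difference is that the paper quotes \cite[4.13]{Dai:bir} directly for $c(g)\ge 2n$, whereas you route through the missing-curve construction from the proof of Lemma~\ref{.2938f9283w9fq0349hfio} and convert $q(f)$ to $c(f)$ via \ref{dijfaksdjfall00000}\eqref{000001}.
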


\begin{proof}
Let $n \in \Nat$.
By \cite[4.13]{Dai:bir}, there exists an irreducible element $g \in \BirA$ satisfying $c(g) \ge 2n$.
By \ref{92389h32fhfqfqasde}, there exists $f \in S$ satisfying $f \sim g$; then $c(f) = c(g) \ge 2n$,
so $\deg f > n$ by \eqref{73ffj918}.
\end{proof}

\section*{Factorial closedness of $\Aeul$ in $\BirA$}

Let $\Aeul$ be the submonoid of $\BirA$ generated by SACs and automorphisms.

\smallskip
See \ref{9f2039esdow9e} for the definition of $n(f,C)$, where $f \in \BirA$ and $C \in \Miss(f)$.

\begin{lemma}  \label {092390hr3tgc}
Consider $\aff^2 \xrightarrow{\alpha} \aff^2 \xrightarrow{f} \aff^2$
where $\alpha,f \in \BirA$ and $\alpha$ is a SAC.
Assume that the missing curve $C$ of $\alpha$ is disjoint from $\exc(f)$
and let $D$ be the closure of $f(C)$ in $\aff^2$. 
Then there exist a SAC $\alpha'$ and some $f' \in \BirA$ satisfying
$f \circ \alpha = \alpha' \circ f'$ and $\Miss(\alpha') = \{D\}$.
Moreover, if $f$ is a SAC then so is $f'$.
\end{lemma}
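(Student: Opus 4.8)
The plan is to recognize $D$ as a missing curve of $f \circ \alpha$ that is blown up exactly once, and then to invoke the factorization result \ref{nouveau-dkfjaiuer834134k1j3}\eqref{8734yt9873y4reyw} to peel off a SAC with missing curve $D$ from the left.

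First I would record the relevant data about $\alpha$. Since $\alpha$ is a SAC, \ref{dkjfaidsukkkdr333}\eqref{BE4.10_LT4.1} and \eqref{djfoiqwuefkd6767676} give $n(\alpha)=1$ and $\Miss(\alpha)=\{C\}$ with $C$ a coordinate line; in particular $C \isom \aff^1$, and the unique blow-up of $\alpha$ is centered at its fundamental point, which lies on $C$, so $n(\alpha,C)=1$. The hypothesis $C \cap \exc(f)=\emptyset$ means that $C$ meets no contracting curve of $f$, hence $C \notin \Cont(f)$ and $f(C)$ is dense in the curve $D=\overline{f(C)}$, so $D$ is well defined.

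Next I would apply \ref{90f20rdw9019jds}\eqref{lwoj837456djd93} to $\aff^2 \xrightarrow{\alpha} \aff^2 \xrightarrow{f} \aff^2$ (the hypothesis of \ref{90f20rdw9019jds} holds since $X=Y=Z=\aff^2$, the additivity $n(f\circ\alpha)=n(f)+n(\alpha)$ being supplied by \ref{1dijfaksdjfall00100}). This yields $D \in \Miss(f\circ\alpha)$ together with $n(\alpha,C)\le n(f\circ\alpha,D)$. Crucially, because $C \isom \aff^1$, that result provides the full biconditional $n(\alpha,C)=n(f\circ\alpha,D) \iff C \cap \exc(f)=\emptyset$; since the right-hand side holds by hypothesis, we conclude $n(f\circ\alpha,D)=n(\alpha,C)=1$. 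Thus $D$ is a missing curve of $f\circ\alpha$ that is blown up only once. This is the heart of the argument and the only place the hypothesis $C\cap\exc(f)=\emptyset$ is used; I expect the main obstacle to be nothing deeper than verifying that $C\isom\aff^1$ and $n(\alpha,C)=1$, so that the biconditional (and not merely the inequality) of \ref{90f20rdw9019jds}\eqref{lwoj837456djd93} is available.

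Finally I would invoke \ref{nouveau-dkfjaiuer834134k1j3}\eqref{8734yt9873y4reyw}, applied to the morphism $f\circ\alpha$ and its blown-up-once missing curve $D$: this shows $D$ is a coordinate line and, choosing a SAC $\alpha'$ with missing curve $D$ and fundamental point the unique fundamental point of $f\circ\alpha$ lying on $D$, produces a factorization $f\circ\alpha=\alpha'\circ f'$ with $f'\in\BirA$. Since $\alpha'$ is a SAC it has a single missing curve, namely $D$, so $\Miss(\alpha')=\{D\}$ by \ref{dkjfaidsukkkdr333}\eqref{djfoiqwuefkd6767676}. For the last assertion, if $f$ is a SAC then $n(f)=1$, so $n(f\circ\alpha)=n(f)+n(\alpha)=2$ by \ref{1dijfaksdjfall00100}; as $n(\alpha')=1$, additivity applied to $\alpha'\circ f'$ forces $n(f')=1$, whence $f'$ is a SAC by \ref{dkjfaidsukkkdr333}\eqref{BE4.10_LT4.1}.
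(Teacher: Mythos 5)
Your proposal is correct and follows essentially the same route as the paper's own proof: both use \ref{90f20rdw9019jds}\eqref{lwoj837456djd93} (with the biconditional available since $C \isom \aff^1$) to conclude $n(f\circ\alpha,D)=n(\alpha,C)=1$, then invoke \ref{nouveau-dkfjaiuer834134k1j3}\eqref{8734yt9873y4reyw} to factor $f\circ\alpha=\alpha'\circ f'$ with $\alpha'$ a SAC having missing curve $D$, and finally deduce $n(f')=n(f)$ from additivity of $n$ to handle the last assertion. Your write-up merely makes explicit some verifications (e.g.\ that $n(\alpha,C)=1$ and $C\isom\aff^1$) that the paper states parenthetically.
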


\begin{proof}
By \ref{90f20rdw9019jds}\eqref{lwoj837456djd93},
we have $D \in \Miss( f \circ \alpha)$ and $n( f \circ \alpha , D )=n(\alpha,C)=1$
(because $C\cap \exc(f)=\emptyset$ and $C \isom \aff^1$).
Let $P$ be the unique fundamental point of $f \circ \alpha$ which lies on $D$
and let $\alpha'$ be a SAC with missing curve $D$ and fundamental point $P$. 
Then \ref{nouveau-dkfjaiuer834134k1j3}\eqref{8734yt9873y4reyw} implies that
$f \circ \alpha = \alpha' \circ f'$ for some $f' \in \BirA$.
Then $n(f')=n(f)$, so if $f$ is a SAC then so is $f'$.
\end{proof}

\begin{definition}
Let $h \in \BirA$ be such that $h \notin \AutA$.  Let $C \in \Miss(h)$.
\begin{enumerata}
\setlength{\itemsep}{1mm}

\item   A {\it factorization\/} of $h$ is
a tuple $\fgoth = (h_1, \dots, h_n)$ of elements of $\BirA$ satisfying
$h = h_1 \circ \cdots \circ h_n$ (where $n\ge1$).
If $h_1, \dots, h_n$ are SACs, we say that $\fgoth$ is a {\it factorization of $h$ into SACs.}

\item Given a factorisation $\fgoth = (h_1, \dots, h_n)$ of $h$,
we define $\depth_\fgoth(h,C)$ to be the unique $i \in \{1,\dots,n\}$ satisfying
$$
\begin{minipage}{.9\linewidth}
\it there exists a missing curve of $h_i$ whose image by 
$h_1 \circ \cdots \circ h_{i-1}$ is a dense subset of $C$.
\end{minipage}
$$
Observe that
$\depth_\fgoth(h,C)\ge1$ and that 
$\depth_\fgoth(h,C)=1 \iff C \in \Miss(h_1)$.

\item If $h \in \Aeul$ then we define
$$
\depth(h,C) = \min \setspec{ \depth_\fgoth (h,C) }{ \text{$\fgoth$ is a factorization of $h$ into SACs} }.
$$
Note that $\depth(h,C) \ge 1$, and that $\depth(h,C)=1$ is equivalent to the existence of SACs
$\alpha_1, \dots, \alpha_n$ satisfying
$$
h = \alpha_1 \circ \cdots \circ \alpha_{n} \quad \text{and} \quad \Miss(\alpha_1) = \{C\}.
$$

\end{enumerata}
\end{definition}

\begin{theorem}  \label {9kjxt12544d2jhdbhdfa384732}
If $f,g \in \BirA$ satisfy $g\circ f \in \Aeul$, then $f,g \in \Aeul$.
\end{theorem}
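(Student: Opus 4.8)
The plan is to induct on $n(g\circ f)$, using the depth function to strip a single SAC off the left of $g\circ f$ in a way that is synchronized with a factorization of $g$. Write $h=g\circ f\in\Aeul$. If $n(h)=0$ then $h$ is an automorphism, so $n(g)=n(f)=0$ by \ref{1dijfaksdjfall00100} and \ref{dijfaksdjfall00000}, whence $f,g\in\AutA\subseteq\Aeul$. If $g\in\AutA$ then $f=g^{-1}\circ h\in\Aeul$ since $\Aeul$ is a submonoid containing $g^{-1}$, and we are done. So from now on assume $n(h)>0$ and $g\notin\AutA$.

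The technical heart of the argument is the claim that \emph{if $C\in\Miss(h)$ satisfies $n(h,C)=1$, then $\depth(h,C)=1$}. I would prove this by choosing a factorization $\fgoth=(\alpha_1,\dots,\alpha_n)$ of $h$ into SACs realizing $d:=\depth(h,C)$, and supposing $d\ge2$. Let $\tilde C=\Miss(\alpha_d)$ and put $g_{<d}=\alpha_1\circ\cdots\circ\alpha_{d-1}$, so that $C=\overline{g_{<d}(\tilde C)}$; note $C\in\Miss(\alpha_1\circ\cdots\circ\alpha_d)$ by \ref{Doifqrqkkdsjf8}. Applying \ref{90f20rdw9019jds}(a) to $h=(\alpha_1\circ\cdots\circ\alpha_d)\circ(\alpha_{d+1}\circ\cdots\circ\alpha_n)$ gives $n(\alpha_1\circ\cdots\circ\alpha_d,C)=n(h,C)=1$, and then \ref{90f20rdw9019jds}(b), applied to $\aff^2\xrightarrow{\alpha_d}\aff^2\xrightarrow{g_{<d}}\aff^2$ with $\tilde C\in\Miss(\alpha_d)\setminus\Cont(g_{<d})$, forces $\tilde C\cap\exc(g_{<d})=\emptyset$ (since $n(\alpha_d,\tilde C)=1=n(\alpha_1\circ\cdots\circ\alpha_d,C)$). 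As $\exc(\alpha_{d-1})\subseteq\exc(g_{<d})$, the hypothesis of \ref{092390hr3tgc} holds for $\aff^2\xrightarrow{\alpha_d}\aff^2\xrightarrow{\alpha_{d-1}}\aff^2$, yielding $\alpha_{d-1}\circ\alpha_d=\alpha''\circ\alpha_d'$ with $\alpha'',\alpha_d'$ SACs and $\Miss(\alpha'')=\{\overline{\alpha_{d-1}(\tilde C)}\}$. Substituting this into $\fgoth$ produces a new factorization of $h$ into SACs in which the factor at position $d-1$ has missing curve mapping (under $\alpha_1\circ\cdots\circ\alpha_{d-2}$) onto $C$, so $C$ appears at depth $d-1$, contradicting minimality. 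Hence $d=1$.

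With the claim available I would argue as follows. Since $g$ is not an automorphism, the reasoning that opens the proof of \ref{dkfjaiuer834134k1j3} (namely \cite[4.7]{Dai:trees} together with \ref{nouveau-dkfjaiuer834134k1j3}(b)) produces a missing curve $D_0$ of $g$ with $n(g,D_0)=1$ and a factorization $g=\gamma\circ g_1$, where $\gamma$ is a SAC with $\Miss(\gamma)=\{D_0\}$ and $n(g_1)=n(g)-1$. By \ref{90f20rdw9019jds}(a) we get $D_0\in\Miss(h)$ and $n(h,D_0)=n(g,D_0)=1$, so the claim gives $\depth(h,D_0)=1$; thus $h=\alpha_1\circ h'$ for some SAC $\alpha_1$ with $\Miss(\alpha_1)=\{D_0\}$ and some $h'\in\Aeul$ with $n(h')=n(h)-1$. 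Both $\gamma$ and $\alpha_1$ are SACs with missing curve $D_0$, and their fundamental points lie on $D_0$; moreover these are fundamental points of $h$ (via $\cent(\gamma)\subseteq\cent(g)\subseteq\cent(h)$ and $\cent(\alpha_1)\subseteq\cent(h)$, using \ref{89x21bxnrmv7tr36}). Since $n(h,D_0)=1$, there is a unique fundamental point of $h$ on $D_0$ (the remark following \ref{9f2039esdow9e}), so the two fundamental points coincide, and \ref{dkjfaidsukkkdr333}(d) gives $\theta\in\AutA$ with $\gamma=\alpha_1\circ\theta$.

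Finally I would conclude by cancellation. From $h=\gamma\circ g_1\circ f=\alpha_1\circ\theta\circ g_1\circ f$ and $h=\alpha_1\circ h'$, left-cancelling $\alpha_1$ (legitimate because $\BirA$ is cancellative) yields $h'=(\theta\circ g_1)\circ f\in\Aeul$. As $n(h')<n(h)$, the induction hypothesis gives $\theta\circ g_1\in\Aeul$ and $f\in\Aeul$; then $g_1=\theta^{-1}\circ(\theta\circ g_1)\in\Aeul$ and $g=\gamma\circ g_1\in\Aeul$, completing the induction. The main obstacle is the depth-reduction claim of the second paragraph: it is precisely there that Lemma \ref{092390hr3tgc} is used, and the delicate point is extracting the disjointness $\tilde C\cap\exc(\alpha_{d-1})=\emptyset$ needed to invoke it, which I obtain from the single-blow-up hypothesis $n(h,C)=1$ through \ref{90f20rdw9019jds}.
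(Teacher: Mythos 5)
Your key technical claim --- for $h\in\Aeul$ and $C\in\Miss(h)$, $n(h,C)=1$ implies $\depth(h,C)=1$ --- is correct, and your proof of it is sound: the equality $n(\alpha_d,\tilde C)=n(g_{<d}\circ\alpha_d,C)$ does force $\tilde C\cap\exc(g_{<d})=\emptyset$ via \ref{90f20rdw9019jds}\eqref{lwoj837456djd93}, and Lemma~\ref{092390hr3tgc} then lets you commute the SAC $\alpha_d$ one step to the left, contradicting minimality of the depth; this is essentially the same swapping device the paper uses in its Case~2. The final cancellation-and-induction paragraph is also correct as far as it goes.

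The gap is the first sentence of your third paragraph. You assert that $g\notin\AutA$ alone produces a missing curve $D_0$ of $g$ with $n(g,D_0)=1$ and a factorization $g=\gamma\circ g_1$ through a SAC, citing the opening of the proof of \ref{dkfjaiuer834134k1j3}. But that opening (the appeal to \cite[4.7]{Dai:trees}) rests on hypothesis~(ii) of \ref{dkfjaiuer834134k1j3}, i.e.\ weak admissibility of the configuration of missing curves, which you do not have for $g$: the hypothesis of the theorem concerns $g\circ f$, not $g$. Moreover the assertion is genuinely false for arbitrary non-invertible elements of $\BirA$: Russell's irreducible endomorphism (Remark~\ref{ClearUpConfusion}) has no missing curve blown up only once, since by \ref{nouveau-dkfjaiuer834134k1j3}\eqref{8734yt9873y4reyw} such a curve would make it factor through a SAC, contradicting irreducibility. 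For your particular $g$ the assertion is true, but only as a consequence of the theorem being proved (namely $g\in\Aeul$ together with \ref{nouveau-dkfjaiuer834134k1j3}\eqref{hd819t5fa3s5erh}), so invoking it is circular. This is exactly the crux the paper works to overcome: it starts from a depth-one missing curve $D$ of $g\circ f$ (which exists because $g\circ f\in\Aeul$), and in the hard case $D\notin\Miss(g)$ --- the case your argument silently excludes --- it writes $D=\overline{g(C)}$ with $C\in\Miss(f)$, deduces $n(f,C)=1$ from \ref{90f20rdw9019jds}, peels a SAC off $f$ (not off $g$), rewrites $g\circ\alpha=\alpha_1\circ g'$, and applies the induction hypothesis to both $g'\circ f'$ and $g\circ\alpha$, with a separate swap-based reduction to its Case~1 when $n(f)=1$ (since then $g\circ\alpha=g\circ f$ and induction cannot be applied to it). As written, your proof covers only the situation corresponding to the paper's Case~1, where a missing curve of $g$ is already visible at depth one.
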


\begin{proof}
We proceed by induction on  $n(g \circ f)$, the result being trivial for $n(g \circ f) \le 2$.
Let $n \ge3$ be such that 
\begin{equation}  \tag{$*$}
\forall\, f,g \in \BirA,\qquad  g \circ f \in \Aeul \text{\ and\ } n(g \circ f)<n\ \implies\ f,g \in \Aeul.
\end{equation}
Consider $f,g \in \BirA$ such that $g \circ f \in \Aeul$ and $n(g \circ f)=n$; we have to show that $f,g \in \Aeul$.
Since $g \circ f \in \Aeul$, the number $\depth(g \circ f, C)$ is defined for every $C \in \Miss(g\circ f)$.
Observe that 
\begin{equation}  \label {8234hd1grthdenv}
\begin{minipage}[t]{.9\textwidth}
there exists $C \in \Miss(g \circ f)$ satisfying $\depth(g \circ f, C)=1$,
and any such $C$ satisfies $n(g \circ f,C)=1$.
\end{minipage}
\end{equation}
Indeed, for any factorization $g \circ f = \alpha_1 \circ \cdots \circ \alpha_{n}$ of $g \circ f$ into SACs,
the missing curve $C$ of $\alpha_1$ satisfies
$C \in \Miss(g \circ f)$ and $\depth(g \circ f, C)=1$, so $C$ exists.
Given any $C \in \Miss(g \circ f)$ satisfying $\depth(g \circ f, C)=1$,
there exists a factorization $g \circ f = \alpha_1 \circ \cdots \circ \alpha_{n}$ of $g \circ f$ into SACs
satisfying $\Miss(\alpha_1)=\{C\}$; then
$n(g\circ f, C) = n( \alpha_1 \circ \cdots \circ \alpha_{n} , C) = n(\alpha_1,C)=1$,
where the second equality follows from \ref{90f20rdw9019jds}\eqref{70209ru29fyt2suddh29}.
This proves \eqref{8234hd1grthdenv}.

We now proceed to prove that $f,g \in \Aeul$. 
We first do so in two special cases (numbered 1 and 2) and then in the general case.

\medskip
\noindent {\bf Case 1:} there exists $C \in \Miss(g)$ such that $\depth(g \circ f, C) = 1$.\\
Then there exist SACs $\alpha_1, \dots, \alpha_n$ satisfying $g \circ f = \alpha_1 \circ \cdots \circ \alpha_{n}$
and $\Miss(\alpha_1)= \{C\}$.
We note that $n(g,C) = n(g\circ f, C) = 1$, where the first equality follows from 
\ref{90f20rdw9019jds}\eqref{70209ru29fyt2suddh29} and the second from \eqref{8234hd1grthdenv},
and where  the assumption $C \in \Miss(g)$ is needed for the first equality.
As $n(g,C) = 1$, there is a unique fundamental point $P$ of $g$ lying on $C$.
Consider the fundamental point $P_1$ of $\alpha_1$;
then \ref{89x21bxnrmv7tr36} implies that $P$ and $P_1$ are fundamental points of $g \circ f$ (lying on $C$);
as $n(g\circ f,C)=1$, $P=P_1$;
so $\alpha_1$ is a SAC with missing curve $C$ and fundamental point $P$.
By \ref{nouveau-dkfjaiuer834134k1j3}\eqref{8734yt9873y4reyw}, there exists $g' \in \BirA$ such that
$g = \alpha_1 \circ g'$. Then 
$\alpha_1 \circ g' \circ f = g \circ f = \alpha_1 \circ \cdots \circ \alpha_{n}$; cancelling $\alpha_1$ yields
$g' \circ f = \alpha_2 \circ \cdots \circ \alpha_{n} \in \Aeul$.
As $n( g' \circ f ) = n-1$, we get $g', f \in \Aeul$ by $(*)$.
Then $g = \alpha_1 \circ g' \in \Aeul$ as well, so we are done in Case~1.

\medskip
\noindent {\bf Case 2:} $n(f)=1$.\\
Note that $f$ is a SAC; let $C$ be its missing curve.
By \eqref{8234hd1grthdenv}, we may consider $D_1 \in \Miss(g \circ f)$ satisfying $\depth(g \circ f, D_1)=1$
and  $n(g \circ f, D_1)=1$.

By Case~1, we may assume that $D_1 \notin \Miss(g)$. 
Then (by \ref{Doifqrqkkdsjf8}) $D_1$ is the closure of $g(C)$; since $C \isom \aff^1$,
we have in fact $g(C)=D_1$ (every dominant morphism $\aff^1 \to C$ is surjective).
Let $P$ be the unique fundamental point of $g\circ f$ on $D_1$ and let $Q \in C$ be the fundamental point of $f$;
then $g(Q) \in D_1$ is a fundamental point of $g\circ f$ by \ref{89x21bxnrmv7tr36}, so $g(Q)=P$.

Since $n(g \circ f, D_1)=1 = n(f,C)$,
\ref{90f20rdw9019jds}\eqref{lwoj837456djd93} implies that $C \cap \exc(g) = \emptyset$.
By \ref{788hkckdssljkhjh25}, $g$ restricts to an isomorphism $\aff^2 \setminus \exc(g) \to \aff^2 \setminus \Gamma_g$,
where $\Gamma_g$ is the union of all missing curves of $g$.
Since $C \subset \aff^2 \setminus \exc(g)$ and $D_1 = g(C)$, if follows that $D_1 \subset \aff^2 \setminus \Gamma_g$.
Since $P \in D_1 \subset \aff^2 \setminus \Gamma_g$ and $\cent(g) \subseteq \Gamma_g$,
we have $P \notin \cent(g)$ and hence $n(g,P)=0$; so \ref{90f20rdw9019jds} gives
$n( g \circ f ,  P ) = n(g,P) + \sum_{P' \in \{Q\}} n(f,P') = 1$
and we have shown
\begin{equation} \label {787832whhswuihp9jxbnmrmidhf}
\text{$D_1 \cap \Gamma_g =\emptyset$ and $n(g\circ f,P)=1$.} 
\end{equation}

Since $\depth(g \circ f, D_1)=1$,
we may choose a factorization $g \circ f = \alpha_1 \circ \cdots \circ \alpha_{n}$ of $g \circ f$ into SACs satisfying
$\Miss(\alpha_1) = \{D_1\}$.
We have $\cent(\alpha_1) = \{P\}$ because the fundamental point of $\alpha_1$ is 
a fundamental point of $g \circ f$ lying on $D_1$.
Write
$\Cont(\alpha_1) = \{E_1\}$, then by \ref{90f20rdw9019jds},
$$
n( \alpha_1 \circ (\alpha_2 \circ \cdots \circ \alpha_n), P)
=  n(\alpha_1,P) + \sum_{P' \in E_1} n( \alpha_2 \circ \cdots \circ \alpha_n , P')
$$
where the left hand side is equal to $n( g \circ f ,  P ) = 1$ by \eqref{787832whhswuihp9jxbnmrmidhf}.
As $n(\alpha_1,P)=1$, we have $n( \alpha_2 \circ \cdots \circ \alpha_n , P') = 0$ for all $P' \in E_1$,
so $\cent( \alpha_2 \circ \cdots \circ \alpha_n ) \cap E_1 = \emptyset$
and in particular $\cent( \alpha_2 ) \cap E_1 = \emptyset$.
It follows that the missing curve $C_2$ of $\alpha_2$ is not equal to $E_1$ (because $\cent(\alpha_2) \subset C_2$). 
So the closure of $\alpha_1(C_2)$ in $\aff^2$ is a curve $D_2$ such that 
$$
D_2 \in \Miss(g \circ f) \setminus \{D_1\} = \Miss(g).
$$
Then $D_2 \subseteq \Gamma_g$, so $D_2 \cap D_1 = \emptyset$ by \eqref{787832whhswuihp9jxbnmrmidhf}.
If $C_2 \cap E_1 \neq \emptyset$ then $\alpha_1(C_2) \cap \alpha_1(E_1) \neq \emptyset$, so $P \in D_2$,
contradicting $D_2 \cap D_1 = \emptyset$; thus 
$$
C_2 \cap E_1 = \emptyset.
$$
This allows us to use \ref{092390hr3tgc}.  By that result, there exist SACs $\alpha_1', \alpha_2'$
such that $\alpha_1 \circ \alpha_2 = \alpha_1' \circ \alpha_2'$ and $\Miss(\alpha_1')=\{D_2\}$.
Since $g \circ f = \alpha_1' \circ \alpha_2' \circ \alpha_3 \circ \cdots \circ \alpha_n$ is a factorization
of $g \circ f$ into SACs satisfying $\Miss(\alpha_1') = \{ D_2 \}$, we have $\depth( g \circ f, D_2 ) = 1$.
Since $D_2 \in \Miss(g)$, Case~1 implies that $f,g \in \Aeul$.

\medskip
\noindent {\bf General case.}
The result is trivial if $n(f)=0$, and follows from Case~2 if $n(f)=1$. 
So we may assume that $n(f)\ge2$.  Consequently, $n(g) \le n-2$.

By \eqref{8234hd1grthdenv}, we may pick $D \in \Miss(g \circ f)$ satisfying $\depth(g \circ f, D)=1$
and  $n(g \circ f, D)=1$.
By Case~1, we may assume that $D \notin \Miss(g)$. 
Then $D$ is the closure of $g(C)$ for some $C \in \Miss(f)$.
We have $1 \le n(f,C) \le  n(g\circ f, D) = 1$, so $n(f,C) = 1$.
Then \ref{nouveau-dkfjaiuer834134k1j3}\eqref{8734yt9873y4reyw} implies that
there exist an SAC $\alpha$ and some $f' \in \BirA$ such that $f = \alpha \circ f'$ and $\Miss(\alpha) = \{C\}$.
On the other hand, the fact that $\depth(g \circ f, D)=1$ allows us to choose 
a factorization $g \circ f = \alpha_1 \circ \cdots \circ \alpha_{n}$ of $g \circ f$ into SACs satisfying
$\Miss(\alpha_1) = \{D\}$. 
We have  $D \in \Miss(g \circ \alpha)$ and
$$
n(g \circ \alpha, D) 
\overset{\ref{90f20rdw9019jds}\eqref{70209ru29fyt2suddh29}}{=}
n( g \circ \alpha \circ f', D) = n(g \circ f, D) = 1.
$$
Let $P$ be the unique fundamental point of $g \circ \alpha$ lying on $D$;
then $P$ is a fundamental point of $g \circ f$ and hence is the unique  fundamental point of $g \circ f$ lying on $D$.
As the fundamental point of $\alpha_1$ is a fundamental point of $g \circ f$ lying on $D$, it follows that
$\alpha_1$ is a SAC with missing curve $D$ and fundamental point $P$.
Then \ref{nouveau-dkfjaiuer834134k1j3}\eqref{8734yt9873y4reyw} implies that
there exists $g' \in \BirA$ satisfying $g \circ \alpha = \alpha_1 \circ g'$.
$$
\xymatrix{
\aff^2 \ar[r]^{f} \ar[rd]_{f'} &  \aff^2 \ar[r]^{g} &  \aff^2  \\
	&  \aff^2 \ar[u]_{\alpha} \ar[r]_{g'}  &  \aff^2  \ar[u]_{\alpha_1}
}
$$
Since $\alpha_1 \circ g' \circ f' = g \circ f = \alpha_1 \circ \cdots \circ \alpha_{n}$,
cancelling $\alpha_1$ gives $g' \circ f' = \alpha_2 \circ \cdots \circ \alpha_{n} \in \Aeul$.
By $(*)$, we obtain $f',g' \in \Aeul$.

Since $f' \in \Aeul$, it follows that $f = \alpha \circ f' \in \Aeul$.

Since $g' \in \Aeul$ we get $g \circ \alpha = \alpha_1 \circ g' \in \Aeul$;
we also have $n(g \circ \alpha) < n$, because $n(g) \le n-2$;
so $g \in \Aeul$ by $(*)$.

So $f,g \in \Aeul$.
\end{proof}


\providecommand{\bysame}{\leavevmode\hbox to3em{\hrulefill}\thinspace}
\providecommand{\MR}{\relax\ifhmode\unskip\space\fi MR }
\providecommand{\MRhref}[2]{%
  \href{http://www.ams.org/mathscinet-getitem?mr=#1}{#2}
}
\providecommand{\href}[2]{#2}

\end{document}